\numberwithin{equation}{section}
\newtheorem{theorem}{Theorem}[section]
\newtheorem{corollary}{Corollary}[section]
\newtheorem{lemma}{Lemma}[section]
\newtheorem{proposition}{Proposition}[section]
\newtheorem{remark}{Remark}
\newtheorem{bigthm}{Theorem}   % Numbered separately, as A, B, etc
\newcommand{\sumtwo}[2]{\sum_{\substack{#1 \\ #2}}} % sum with 2 lines
\newcommand{\abs}[1]{\left| #1\right|}
\newcommand{\calA}{\mathcal{A}}
\newcommand{\calD}{\mathcal{D}}
\newcommand{\calE}{\mathcal{E}}
\newcommand{\calF}{\mathcal{F}}
\newcommand{\calH}{\mathcal{H}}
\newcommand{\calM}{\mathcal{M}}
\newcommand{\calP}{\mathcal{P}}
\newcommand{\calS}{\mathcal{S}}
\newcommand{\frD}{\mathfrak{D}}
\newcommand{\frG}{\mathfrak{G}}
\newcommand{\bbA}{\mathbb{A}}
\newcommand{\bbE}{\mathbb{E}}
\newcommand{\bbL}{\mathbb{L}}
\newcommand{\bbN}{\mathbb{N}}
\newcommand{\bbP}{\mathbb{P}}
\newcommand{\bbQ}{\mathbb{Q}}
\newcommand{\bbR}{\mathbb{R}}
\newcommand{\bbX}{\mathbb{X}}
\newcommand{\bbZ}{\mathbb{Z}}
\newcommand{\sfe}{\mathsf e}
\newcommand{\sfp}{{\mathsf p}}
\newcommand{\sfw}{{\mathsf w}}
\newcommand{\sfC}{\mathsf{C}}
\newcommand{\sfD}{\mathsf{D}}
\newcommand{\sfK}{\mathsf{K}}
\newcommand{\sfL}{\mathsf{L}}
\newcommand{\sfS}{\mathsf{S}}
\newcommand{\sfT}{\mathsf{T}}
\newcommand{\ur}{\underline{r}}
\newcommand{\us}{\underline{s}}
\newcommand{\uv}{\underline{v}}
\newcommand{\uu}{\underline{u}}
\newcommand{\ux}{\underline{x}}
\newcommand{\uy}{\underline{y}}
\newcommand{\uz}{\underline{z}}
\newcommand{\uw}{\underline{w}}
\newcommand{\uB}{\underline{B}}
\newcommand{\uX}{\underline{X}}
\newcommand{\lb}{\left(}
\newcommand{\rb}{\right)}
\newcommand{\dd}{{\rm d}}
\newcommand{\step}[1]{S{\small TEP}\,#1.}
\newcommand{\1}{\mathbbm{1}}
\newcommand{\smo}[1]{{\mathrm o}\lb #1\rb }
\newcommand{\df}{\stackrel{\Delta}{=}}
\newcommand{\be}[1]{\begin{equation}\label{#1}}
\newcommand{\ee}{\end{equation}}
\newcommand{\Hla}{H_\lambda}
\newcommand{\An}{\bbA_n^+}
\newcommand{\Anl}{\bbA_{n, \lambda}^{+}}
\newcommand{\Anr}{\bbA_n^{+, {\mathsf r}}}
\newcommand{\Anlr}{\bbA_{n, \lambda}^{+, {\mathsf r}}}
\begin{document}
\date{\today} 

\title[Ordered walks under area tilts]
{Dyson Ferrari--Spohn diffusions and ordered walks under area tilts}

\author{Dmitry Ioffe}
%\inst{1}
\address{Faculty of IE\&M, Technion, Haifa 32000, Israel}
\email{ieioffe@ie.technion.ac.il}
\thanks{DI was supported by the Israeli Science Foundation grant 
1723/14.}

\author{Yvan Velenik}
\address{Section de Math\'ematiques, Universit\'e de Gen\`eve, 
CH-1211 Gen\`eve, Switzerland}
\email{yvan.velenik@unige.ch}
%\inst{2}
\thanks{YV was partially supported by the Swiss National Science Foundation.}

\author{Vitali Wachtel}
\address{Institut f\"ur Mathematik, Universit\"at Augsburg, D-86135 Augsburg, 
Germany}
\email{vitali.wachtel@math.uni-augsburg.de}
%\inst{3} 

\maketitle

\begin{abstract}
We consider families of non-colliding random walks above a hard wall,
which 
%are constrained
%on non-intersection
%\red{Should we rather say ``on non-crossing''? Or, more explicitly ``on 
%preserving their ordering''?}
%and which 
are subject to a self-potential of tilted area type. 
We view such ensembles as effective models for the level lines of a class of 
$2+1$-dimensional discrete-height random surfaces in statistical mechanics.  
We prove that, under rather general assumptions on the step distribution and on 
the self-potential, such walks converge, under appropriate rescaling, to 
non-intersecting Ferrari--Spohn diffusions associated with limiting 
Sturm--Liouville operators. In particular, the limiting invariant measures are 
given by the squares of the corresponding Slater determinants. 
\end{abstract}
\section{Introduction} 
Random walks under area tilts mimic phase separation lines in certain 
low-temperature two-dimensional lattice models of statistical mechanics,  
particularly in the regime of pre-wetting.  A prototypical example is the 
two-dimensional Ising model in a large box with negative boundary conditions
and a small positive magnetic field $h$. In such circumstances, the 
$\pm$-interface is pushed towards the boundary of the box and its fluctuations 
above flat segments of the boundary are expected to be of order $h^{-1/3}$.
Rigorous justification of the latter claim is still an open problem (but 
see~\cite{V04} for partial results in this direction). Instead, the 
papers~\cite{HrynivVelenik09,IofShlosVel2014} are devoted to a refined 
analysis of effective random walk models of such interfaces. In particular, 
the full scaling limits were identified in~\cite{IofShlosVel2014}, for a large 
class of effective random walks, as Ferrari--Spohn 
diffusions~\cite{FerSpohn05}. 

\smallskip
In this paper, we consider ensembles of $n$ non-colliding random walks 
which are  subject to generalized area tilts. Precise definition are given in
Section~\ref{sec:RW}. These ensembles are intended to model non-intersecting 
level lines for certain low-temperature $2+1$-dimensional interfaces (which 
themselves are intended to model two-dimensional random surfaces of lattice
statistical mechanics). A prototypical example is the SOS model, 
see~\cite{CapLubMarSlyTon13,CapMarTon14} and references therein, or even more 
so its version with bulk Bernoulli fields which was introduced 
in~\cite{IofShlos08}. In either case, low-temperature level lines have the 
structure of Ising polymers whose effective random walk representation is 
discussed in~\cite{IofShlosTon15} and is based on the general fluctuation 
theory of ballistic walks with self-interactions as developed 
in~\cite{IofVel08}. 

\smallskip
In Section~\ref{sec:S-T}, we introduce and briefly discuss the class of limiting 
objects, which we call Dyson Ferrari--Spohn diffusions. The latter can be 
alternatively described as Ferrari--Spohn diffusions conditioned to remain
ordered, or as ergodic $n$-dimensional diffusions driven by 
the $\log$-derivative of the Slater determinants of the corresponding 
Sturm--Liouville operators.
The construction is well understood: 
We refer to~\cite[Section~2]{Sosh00} for extensive details on determinantal 
random point fields in general and Fermi gas in particular, and 
to~\cite[Section~3]{Dui15} where such diffusions are discussed for specific 
kernels in the context of random matrix theory.  

Properties of Dyson Ferrari--Spohn diffusions, in the form we need them, are 
formulated in Theorem~\ref{thm:Vdm}.
To keep our exposition self-contained and to stress the role played by the 
Karlin--McGregor formula, we sketch the proof. 

\smallskip
Our effective model of ordered walks under area tilts is introduced in 
Section~\ref{sec:RW}, while our main result, Theorem~\ref{thm:A}, is formulated 
in Subsection~\ref{sub:Main}. In Subsection~\ref{sub:resc-notation}, we 
introduce the rescaling notation which is employed in all the subsequent 
arguments. The step-by-step structure of our arguments is explained in 
Subsection~\ref{sub:Structure}. The details of the proofs are given in 
Sections~\ref{sec:CM-Prob}--\ref{sec:lems} and in the Appendix. The 
organization of these sections is described in Subsection~\ref{sub:Org}. Many 
of our technical estimates rely on strong approximation techniques and on a 
refinement of recent results on random walks in Weyl chambers and on 
cones~\cite{DW15, DW15b}.

\section{Sturm--Liouville operators and Dyson Ferrari--Spohn  diffusions}
\label{sec:S-T}
\subsection{One particle} 

Consider the Sturm--Liouville operator
\be{eq:SL}
\sfL = \frac{1}{2}\frac{\dd^2}{\dd r^2} - q (r ) ,
\ee
where $q$ is a non-negative symmetric $\sfC^2$-potential satisfying 
\be{eq:q-cond}
\lim_{\abs{r}\to\infty} q(r)=\infty .
\ee
We can either think of $\sfL$ as being defined on $\bbL_2(\bbR_+)$
with zero boundary conditions at zero, or as being defined on $\bbL_2(\bbR)$. 
It is classical fact~\cite{CodLev55} 
%(\blue{Check $\bbL_2 (\bbR )$ case}) 
that $\sfL$ has a complete orthonormal family of eigenfunctions
\be{eq:eigenphi}
\sfL \varphi_i = -\sfe_i\varphi_i\quad 0 <\sfe_1 <\sfe_2 <\dots \nearrow\infty .
\ee
The Krein--Rutman eigenfunction $\varphi_1$ is positive on $(0,\infty)$, 
respectively on $\bbR$. 

In the case of the half-line, $\sfL$ has a closed self-adjoint extension 
from $\sfC_0(0,\infty)$, whereas in the case of $\bbR$ it has a closed
self-adjoint extension from $\sfC_0(\bbR)$. In both cases, the domain of
the closure is given by 
\be{eq:L-domain}
\calD(\sfL) =
\Bigl\{ f=\sum_k f_k \varphi_k~:~ \sum_k \sfe_k f_k^2 <\infty \Bigr\} .
\ee
We proceed to discuss the half-line case only; the full-line case would be
a literal repetition. 

$\sfL$ is a generator of a contraction semigroup $\sfT_t$ on $\bbL_2(\bbR_+)$:
For $f = \sum f_k\varphi_k$, 
\be{eq:Tt}
\sfT_t f (r) = \sum_k f_k \mathrm{e}^{-\sfe_k t}  \varphi_k .
\ee
This semigroup has the following probabilistic representation: For $r>0$, 
let $\hat{\mathbf P}^r$ be the (sub-probability) path measure of the Brownian 
motion started at $r$ and killed upon hitting the origin. Then, for any 
$f\in \calD(\sfL)$ and any $t\geq 0$, 
\be{eq:Tt-BM}
\sfT_t f (r) = \hat{\mathbf E}^r \bigl\{ \mathrm{e}^{-\int_0^t q (B(s))\,\dd s} 
f (B (t))\bigr\} .
\ee
Clearly, $\sfT_t $ is an integral operator with kernel $h_t$ given by 
\be{eq:ht}
h_t (r,s) = \sum \mathrm{e}^{-\sfe_k t}\varphi_k (r)\varphi_k (s) .
\ee
%\blue{Check conditions for 1D eigenspaces}. 

\subsection{$n$ non-colliding particles.} 

Let us fix  $n\in\bbN$ and define
\be{eq:Aplus}
\bbA_n^+ = \{ \ur \in \bbR^n \,:\, 0< r_1 <\dots < r_n\} .
\ee
Let $\sfL_i$ be a copy of $\sfL$ acting on the $i$-th variable. Consider 
the closed self-adjoint extension of 
\be{eq:frL}
\sum_1^n \sfL_i =
\sum_1^n \Bigl( \frac{1}{2}\frac{\partial^2}{\partial r_i^2} - q (r_i) \Bigr) 
\ee
from $\sfC_0(\bbA_n^+)$ and let $\sfT_t^{+}$ be the corresponding contraction 
semigroup on $\bbL_2(\bbA_n^+)$. 

In probabilistic terms, $\sfT_t^{+}$ can be described as follows: For an 
$n$-tuple $\ur\in\bbA_n^+$, set 
\[
\hat{\mathbf E}^{\ur} =
\hat{\mathbf E}^{r_1} \otimes \hat{\mathbf E}^{r_2} \otimes \cdots \otimes 
\hat{\mathbf E}^{r_n} .
\]
Let $\uB$ be the $n$-dimensional Brownian motion, and define  
\be{eq:Tkilled}
\tau  = \min\{ t \,:\, \uB(t)\not\in\bbA_n^+ \} .
%\exists j\neq i\ \text{such that}\ B_i (t) = B_j (t)
\ee
In other words, $\tau$ is the minimum between the first collision time and
the first time the bottom trajectory exits from the positive semi-axis. Then, 
\be{eq:Tt-plus}
\sfT_t^{+} f (\ur) =
\hat{\mathbf E}_{\ur} \bigl\{ \mathrm{e}^{-\sum_i \int_0^t q (B_i(s))\,\dd s}
f(\uB(t)) \1_{\tau > t} \bigr\} .
\ee
$\sfT_t^+$ is an integral operator on $\bbL_2(\bbA_n^+)$ and, by the 
Karlin--McGregor formula, its kernel $\kappa_t$ is given by 
\be{eq:kappa-t}
\kappa_t (\ur,\us) = \operatorname{det} \{ h_t(r_i,s_j) \} .
\ee

\subsection{Limiting behaviour.} 

Let
\be{eq:DeltaVdm}
%\begin{split}
\Delta (\ur) 
= 
\operatorname{det} 
\begin{bmatrix}
\varphi_1(r_1)	& \varphi_2(r_1) 	& \cdots &\varphi_n (r_1 ) \\
\varphi_1(r_2)	& \varphi_2(r_2) 	& \cdots &\varphi_n (r_2 ) \\
\vdots 		& \vdots 		& \ddots &\vdots \\
\varphi_1(r_n)	& \varphi_2(r_n) 	& \cdots &\varphi_n (r_n ) 
\end{bmatrix} .
%\\
%&
%\df 
%\operatorname{det} \! 
%\left[ \begin{array}{cccc}
%\varphi_1,  & \varphi_2, & \dots , &\varphi_n\\
%r_1, & r_2, &\dots , & r_n         
%         \end{array}
%\right] .
%\end{split}
\ee
Note that $\Delta\in\bbL_2(\bbA_n^+)$ .

\begin{theorem}
\label{thm:Vdm}
Set $\sfD_n  = \sum_1^n\sfe_\ell$. Then, 
\be{eq:Vdm} 
\lim_{t\to\infty} \mathrm{e}^{\sfD_n t} \kappa_t (\ur,\us)
= 
\Delta(\ur) \Delta(\us) .
\ee
Moreover, for any
%non-negative (and non-trivial) 
$f\in\bbL_2(\bbA_n^+)$ and for any $t>0$,
\begin{align}
&\lim_{T\to\infty}
\frac{\bbE_{\ur} \bigl\{ \mathrm{e}^{-\sum_i \int_0^T q(B_i(s))\,\dd s}
f(\uX(t))
%g (\uX (T )) 
\1_{T < \tau_+} \bigr\} }
{\bbE_{\ur}\bigl\{ \mathrm{e}^{-\sum_i \int_0^T q(B_i(s))\,\dd s}
%g (\uX (T ))
\1_{T < \tau_+} \bigr\} } \notag\\ 
&\hspace{3cm} = 
\frac{\mathrm{e}^{ \sfD_n t}}{\Delta (\ur )}
\bbE_{\ur}\bigl\{ \mathrm{e}^{-\sum_i \int_0^t q (B_i (s))\,\dd s}
f(\uB(t)) \Delta(\uB(t)) \1_{t < \tau_+} \bigr\} \notag\\
&\hspace{3cm} = 
\frac{\mathrm{e}^{ \sfD_n t}}{\Delta(\ur)} \sfT_t^+(f\Delta)(\ur)
\df
\sfS_t^+ f (\ur) .
\label{eq:Vdm-lim}
\end{align}
In its turn, $\sfS_t^+$ is a diffusion semigroup with transition kernel 
\be{eq:qt-kernel}
q_t (\ur,\us) = 
\frac{\mathrm{e}^{\sfD_n t }}{\Delta(\ur)} \kappa_t(\ur,\us) \Delta(\us) ,
\ee
which is self-adjoint on $\bbL_2(\bbA_n^+,\Delta^2)$; the generator of the 
corresponding ergodic diffusion on $\bbA_n^+$ is given by 
\be{eq:CondForm2} 
\frG_{n}^+
= 
\frac{1}{2}\sum_1^n\frac{\partial^2}{\partial r_i^2} + 
\nabla\log (\Delta) (\ur) 
%\frac{\nabla  \Delta (\ur )}{ \Delta (\ur )}
\cdot\nabla 
=
\frac{1}{2 \Delta^2(\ur)} {\rm div} \bigl(\Delta^2(\ur)\nabla\bigr) .
\ee
\end{theorem}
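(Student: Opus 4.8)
The plan is to establish the three assertions of Theorem~\ref{thm:Vdm} in sequence, with the Karlin--McGregor formula~\eqref{eq:kappa-t} as the backbone. First I would prove the asymptotic~\eqref{eq:Vdm}. By~\eqref{eq:ht}, each entry $h_t(r_i,s_j)=\sum_k \mathrm{e}^{-\sfe_k t}\varphi_k(r_i)\varphi_k(s_j)$, so $\kappa_t(\ur,\us)=\operatorname{det}\{h_t(r_i,s_j)\}$ expands, via multilinearity of the determinant, into a sum over multi-indices $(k_1,\dots,k_n)$ of $\prod_\ell \mathrm{e}^{-\sfe_{k_\ell} t}$ times $\operatorname{det}\{\varphi_{k_j}(r_i)\}\operatorname{det}\{\varphi_{k_j}(s_i)\}$. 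Determinants with a repeated column vanish, so only strictly increasing $n$-tuples $k_1<\dots<k_n$ survive; among these the one minimizing $\sum\sfe_{k_\ell}$ is $(1,2,\dots,n)$ by~\eqref{eq:eigenphi}, with value $\sfD_n$, and it is the unique minimizer since the $\sfe_\ell$ are strictly increasing. Multiplying by $\mathrm{e}^{\sfD_n t}$ and letting $t\to\infty$ kills every other term, leaving exactly $\operatorname{det}\{\varphi_j(r_i)\}\operatorname{det}\{\varphi_j(s_i)\}=\Delta(\ur)\Delta(\us)$. One should also justify convergence of the series (uniform on compacts), which follows from standard eigenfunction/heat-kernel bounds for Sturm--Liouville operators with confining potential; this is routine and I would merely cite~\cite{CodLev55}.

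Next, for~\eqref{eq:Vdm-lim} I would argue that the ratio on the left is a Doob-type conditioning: the denominator equals $\sfT_T^+\1$ evaluated via~\eqref{eq:Tt-plus} (with $f\equiv 1$), and by the Markov property at time $t$ the numerator equals $\sfT_t^+\bigl(g_{T-t}\bigr)(\ur)$ where $g_s(\uy)=f(\uy)\,\sfT_s^+\1(\uy)$ — more precisely one conditions the area-tilted killed Brownian motion to survive up to time $T$ and reads off the law at time $t$. Integrating the kernel identity $\mathrm{e}^{\sfD_n T}\kappa_T(\ur,\us)\to\Delta(\ur)\Delta(\us)$ against the (integrable, by $\Delta\in\bbL_2(\bbA_n^+)$ and the spectral gap $\sfe_{n+1}-\sfe_n>0$ which makes the error uniformly exponentially small) test functions, the $T$-dependence factorizes: numerator and denominator both pick up $\mathrm{e}^{-\sfD_n T}\Delta(\ur)$ times, respectively, $\int \Delta(\us)\,[\text{law at time }t] (\dd \us)$ with or without the extra $f$. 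After cancellation and using $\int h_t(r,s)\varphi_k(s)\,\dd s=\mathrm{e}^{-\sfe_k t}\varphi_k(r)$ to recognize $\int \kappa_t(\ur,\us)\Delta(\us)\,\dd\us = \mathrm{e}^{-\sfD_n t}\Delta(\ur)$, one arrives at $\frac{\mathrm{e}^{\sfD_n t}}{\Delta(\ur)}\sfT_t^+(f\Delta)(\ur)$, which is the middle and right expressions via~\eqref{eq:Tt-plus}. Dominated convergence, with the spectral-gap bound as the dominating estimate, is what makes the interchange of limit and expectation legitimate.

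For the last part, define $q_t$ by~\eqref{eq:qt-kernel}; then $\sfS_t^+ f(\ur)=\int q_t(\ur,\us) f(\us)\,\Delta^2(\us)\,\dd\us$ is immediate from the definition of $\sfT_t^+$ as an integral operator with kernel $\kappa_t$. The semigroup property $\sfS_{t+s}^+=\sfS_t^+\sfS_s^+$ and conservativity $\sfS_t^+\1=\1$ both follow from the corresponding (sub-Markov) properties of $\sfT_t^+$ together with the eigen-relation $\sfT_t^+\Delta=\mathrm{e}^{-\sfD_n t}\Delta$, which itself is a consequence of the Karlin--McGregor expansion above (or of~\eqref{eq:Vdm} integrated): indeed $\Delta$ is, up to normalization, the Krein--Rutman ground state of $\sum\sfL_i$ on $\bbA_n^+$, strictly positive on $\bbA_n^+$ because $\varphi_1,\dots,\varphi_n$ form a Chebyshev system so the Wronskian-type determinant does not vanish. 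Self-adjointness on $\bbL_2(\bbA_n^+,\Delta^2)$ is the symmetry $q_t(\ur,\us)\Delta^2(\us)=q_t(\us,\ur)\Delta^2(\ur)$, which is manifest from~\eqref{eq:qt-kernel} once one uses the symmetry $\kappa_t(\ur,\us)=\kappa_t(\us,\ur)$ inherited from the symmetry of $h_t$. Finally the generator~\eqref{eq:CondForm2} is the standard ground-state (Doob $h$-transform) computation: conjugating $\sum\sfL_i$ by $\Delta$ gives $\Delta^{-1}\bigl(\tfrac12\sum\partial_i^2 - \sum q(r_i)\bigr)(\Delta\,\cdot\,) = \tfrac12\sum\partial_i^2 + \nabla\log\Delta\cdot\nabla + \Delta^{-1}(\sum\sfL_i\Delta)$, and $\sum\sfL_i\Delta = -\sfD_n\Delta$ cancels against the $\mathrm{e}^{\sfD_n t}$ prefactor, leaving $\frG_n^+=\tfrac12\sum\partial_i^2+\nabla\log\Delta\cdot\nabla$; rewriting $\nabla\log\Delta\cdot\nabla$ in divergence form gives the second expression. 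Ergodicity on $\bbA_n^+$ follows since $\Delta^2$ is (after normalization) the unique invariant probability density, $\Delta^2\in\bbL_1$, with everything confined by $q$.

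The main obstacle is the interchange-of-limits step in~\eqref{eq:Vdm-lim}: one must control the convergence $\mathrm{e}^{\sfD_n T}\kappa_T(\ur,\us)\to\Delta(\ur)\Delta(\us)$ with enough uniformity in $\us$ to integrate against $\Delta$ and against the time-$t$ transition densities, which requires quantitative eigenfunction and tail bounds for $\sfL$ rather than the bare spectral decomposition. Everything else — the determinant expansion, the $h$-transform algebra, the Karlin--McGregor symmetry — is formal once that domination is in place; since the paper says it only sketches this proof, I would state the needed heat-kernel estimate as a citation to~\cite{CodLev55} and present the argument at the level of the expansion and the cancellations above.
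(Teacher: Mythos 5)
Your proposal is correct and follows essentially the same route as the paper: the multilinear expansion of the Karlin--McGregor determinant $\kappa_t$ in the eigenbasis (the paper phrases it via the volume form $F(b_1,\dots,b_n)$, you via multi-indices, but it is the same computation, with the spectral gap killing all terms except the one indexed by $(1,\dots,n)$), then the Markov property plus the eigenrelation $\sfT_t^+\Delta=\mathrm{e}^{-\sfD_n t}\Delta$ for \eqref{eq:Vdm-lim}, and the standard ground-state $h$-transform using $\sum\sfL_i\Delta=-\sfD_n\Delta$ for the generator. You supply somewhat more detail on the interchange-of-limits step than the paper's sketch, which simply invokes ``modulo some technicalities''.
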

\begin{proof}
Let us introduce the column vectors
\[
b_\ell = 
\begin{bmatrix}
  \varphi_\ell (r_1 )\\
  \varphi_\ell (r_2 )\\
  \vdots \\
  \varphi_\ell (r_n )
\end{bmatrix}
\qquad
(\ell=1, \ldots , n)
\]
and the volume form $F(c_1,\ldots,c_n) = \operatorname{det}[c_1,\ldots,c_n]$.
Under our assumptions, 
%\eqref{eq:KMcG} implies,
\eqref{eq:eigenphi}, \eqref{eq:ht} and~\eqref{eq:kappa-t} imply, asymptotically 
as $t\to\infty$, that
\begin{align}
\mathrm{e}^{\sfD_n t} \kappa_t(\ur,\us) \bigl( 1+ \smo{1} \bigr)
&= F\Bigl(
\sum_1^n \varphi_\ell(s_1) b_\ell, 
\sum_1^n \varphi_\ell(s_2) b_\ell , \ldots ,
\sum_1^n \varphi_\ell(s_n) b_\ell 
\Bigr)
%\prod_1^n\varphi_1^2 (s_\ell) 
\notag\\
&= 
F (b_1,\ldots,b_n) \sum_\sigma (-1)^{\operatorname{sgn}(\sigma)} 
\prod_{\ell=1}^n \varphi_{\sigma_\ell}(s_\ell) ,
%\, \prod_1^n\varphi_1^2 (s_\ell) ,
\label{eq:F-form}
\end{align}
and the first claim~\eqref{eq:Vdm} follows. Above, $\sigma$ runs over all 
permutations of $\{1,\ldots,n\}$ and $\operatorname{sgn}(\sigma) = \pm 1$ 
denotes the signature of $\sigma$. 

Modulo some technicalities, \eqref{eq:Vdm-lim} follows from~\eqref{eq:Vdm} and 
the Markov property. 

Finally, $\frG_n$ in~\eqref{eq:CondForm2} is the generator of $\sfS_t^+$, since 
the generator of $\sfT_t^+$ is the closed self-adjoint extension 
of~\eqref{eq:frL} from $\sfC_0(\bbA_n^+)$ and since, by direct computation, 
\be{eq:frL-Delta} 
\sum_1^n \Bigl( \frac{1}{2}\frac{\partial^2}{\partial r_i^2} - q (r_i )\Bigr)
\Delta(\ur)
= -\sfD_n \Delta(\ur) .
\ee
\end{proof}

\subsection{Dyson diffusions for Sturm--Liouville operators.} 

For every $n\in\bbN$, the diffusion
\be{eq:FS-diffusion}
\dd\ux (t) = \dd\uB (t) + \nabla\log(\Delta) (\ux(t)) \,\dd t ,
\ee
with the generator $\frG_n$ described in Theorem~\ref{thm:Vdm}, lives on 
$\bbA_n^+$ and is reversible with respect to $\Delta^2(\ur)\,\dd\ur$. In the 
sequel, we shall use $\bbP_n^+$ for its distribution on 
$\sfC\bigl((-\infty,+\infty),\bbA_n^+\bigr)$ and $\bbP_n^{+;T}$ for the 
restriction of this distribution to $\sfC\bigl([-T,T],\bbA_n^+\bigr)$. 
Without loss of generality, let us assume that $\Delta^2(\ur)$ is a probability 
density (on $\bbA_n^+$). Note that the latter has a determinantal structure: 
\be{eq:n-Kernel-det}
\Delta^2(\ur) = 
\operatorname{det} \{ \sfK_n (r_i,r_j) \} ,
\ee
where the kernel $K_n$ is given by
\be{eq:n-Kernel}
\sfK_n (r,s) = \sum_{\ell=1}^n \varphi_\ell(r) \varphi_l(s) .
\ee
In particular, the level density distribution is given by
\be{eq:level-dens}
\rho_n (r) = \frac{1}{n} K_n(r,r) = \frac{1}{n} \sum_1^n \varphi_\ell^2(r) .
\ee
There are similar determinantal formulas for level spacing, gap probabilities, 
etc. We note that the unpublished work~\cite{Bornemann2012} contains results 
on the universality of scaling limits (as $n\to\infty$) in this general 
Sturm--Liouville context. 
 
\section{Ordered walks with area tilts.}
\label{sec:RW}
\subsection{Underlying random walks and ordering of trajectories}
The setup follows~\cite{IofShlosVel2014}. 

Let $p_y$ be an irreducible random walk kernel on $\bbZ$.
The probability of a finite trajectory $\bbX = (X_1,X_2,\ldots,X_k)$ is
$\sfp(\bbX) = \prod_i p_{X_{i+1} - X_i}$.
The product probability of $n$ finite trajectories $\underline{\bbX} = 
(\bbX^1,\ldots,\bbX^n)$ is
\be{eq:RW-measure}
% \sfp^\otimes 
%\blue
\mathbf{P} (\underline{\bbX}) = \prod_{\ell=1}^n \sfp (\bbX^\ell). 
\ee

\paragraph{\emph{Assumptions on $\sfp$}}
Assume that
\begin{equation}
\label{eq:Assumption}
\sum_{z\in\bbZ} z p_z = 0
%= {\sum_z z p_{-z}}
\quad\text{and $\sfp$ has finite exponential moments}.
\end{equation}
In the sequel, we, 
%\blue
{
in order to facilitate the notation, 
}
shall assume
that the variance satisfies, 
\be{eq:sigma}
\sigma^2 = \sum_{z\in\bbZ} z^2 p_z
%= {\sum_z z^2 p_{-z}}
%= \Var_{\sfp} (X)
= 1 .
\ee
%\red{[Why is there no loss of generality in setting the variance to $1$?]}

\smallskip
\paragraph{\emph{Sets of trajectories}}

%$\calP^{\uu,\uv}_{M,N,+}$
 %and $\calA^{\uu,\uv}_{M,N,+}$
 
Let $u, v\in\bbN$. As in~\cite{IofShlosVel2014},
$\calP^{u,v}_{M,N,+}$ is used to denote the set of trajectories 
${\bbX}$ starting at $u$ at time $M$, ending at $v$ at time $N$ and 
staying positive during the time interval $\{M,\ldots,N\}$. 

Let $\uu,\uv\in\bbN^n\cap\bbA_n^+$ and $M,N\in\bbZ$ 
with $M\leq N$.
Let $\calP^{\uu,\uv}_{M,N,+}$ be the family of 
$n$ trajectories $\underline{\bbX}$ starting at $\uu$ at time $M$, 
ending at $\uv$ at time $N$
% , staying positive during the time interval $\{M, \ldots ,N\}$
and satisfying
\be{eq:Pos-constaint1}
0 < X^1_j < X^2_j <\dots < X^n_j\quad \forall j\in \{M+1,\ldots,N-1\} .
\ee

For $\uu,\uv\in\bbN^n$, let 
$\calA^{\uu,\uv}_{M,N,+}$ be the set of $n$ 
trajectories $\underline{\bbX}$ starting at $\uu$ at time $M$, 
ending at $\uv$ at time $N$, staying positive during the time 
interval $\{M, \ldots ,N\}$ and satisfying 
\be{eq:Pos-constaint2}
X^\ell_j \neq  X^k_j \quad \forall j\in \{M,\ldots,N\} \text{ and } 
\ell\neq k.
\ee

For $N>0$, we shall use the shorthand notations
$\calP^{\uu,\uv}_{N,+}
= 
\calP^{\uu,\uv}_{-N, N,+}$ and
$\hat\calP^{\uu ,\uv}_{N,+}
=
\calP^{\uu ,\uv}_{1,N,+}$.
The same convention applies for the shorthand notations  
$\calA^{\uu ,\uv}_{N,+}$ and 
$\hat\calA^{\uu ,\uv}_{N,+}$. 

The model which we define below is a polymer measure over 
ordered trajectories $\calP^{\uu,\uv}_{N,+}$. 
The families $\calA^{\uu,\uv}_{M,N,+}$ are needed
for an application of the Karlin--McGregor formula. 

\subsection{The model}
\label{sub:model}

Let $\{V_\lambda\}_{\lambda>0}$ be a family of self-potentials 
$V_\lambda: \bbN\to\bbR_+$.
For a finite trajectory $\bbX = (X_M,\ldots,X_N)$, let
$\sfp(\bbX) = \prod_{i=M+1}^N \sfp(X_i - X_{i-1})$ be its probability for the 
underlying random walk, and let us introduce the tilted weights  
\be{eq:p-lambda}
\sfw_\lambda (\bbX) = \mathrm{e}^{-\sum_{M+1}^N V_\lambda(X_i)} \sfp(\bbX) . 
\ee 
Given $u,v\in\bbN$ and $\lambda>0$, define the partition functions 
and the probability distributions
\be{eq:pf}
Z^{u,v}_{N,+,\lambda} = 
\sum_{{\bbX}\in\calP^{u,v}_{N,+}} \sfw_\lambda(\bbX)
%{\rm
%e}^{-\sum_{-N}^N
%V_\lambda(X_i ) }\sfp  ({\bbX} ) 
\text{ and } \bbP^{u,v}_{N,+,\lambda}(\bbX)
= \frac{1}
%{\rm
%e}^{-\sum_{-N}^N
%V_\lambda(X_i ) }\sfp  ({\bbX} )}
{Z^{u v}_{N,+,\lambda}}\, \sfw_\lambda(\bbX) 
\1_{\{\bbX\in\calP^{u,v}_{N,+}\} } . 
\ee
In the case of an $n$-tuple $\underline{\bbX} = (\bbX^1,\ldots,\bbX^n)$ of
%interacting 
trajectories, we consider the product weights
$\sfw_\lambda(\underline{\bbX}) = \prod_{i=1}^n \sfw_\lambda(\bbX^i)$.  
If $\calS$ is a finite or countable set of such tuples, then 
%for 
%given $\uu, \uv\in\bbN^n\cap\bbA_n^+$,  
%$\lambda >0$ and 
%$
%\calS \subseteq\bigoplus_1^n \calP^{{\sfu_i} ,
%{\sfv_i}}_{N,+}
%$  
the corresponding restricted partition functions are denoted by
\be{eq:pfS}
%^{\uu ,\uv}_{N,+,\lambda} 
Z_\lambda \left[\calS\right] = 
\sum_{\underline{\bbX}\in\calS
%\calP^{\uu ,
%\uv}_{N,+}
} 
%{\rm
%e}^{-\sum_{\ell=1}^n \sum_{-N}^N
%V_\lambda(X_i^\ell ) }\sfp^\otimes  (\underline{\bbX} ) , 
\sfw_\lambda(\underline{\bbX}) .
\ee
We shall use the shorthand notations 
$Z^{\uu,\uv}_{N,+,\lambda} = 
Z_\lambda 
%^{\uu ,\uv}_{N,+,\lambda} 
[\calP^{\uu,\uv}_{N,+}]$ and
$\hat Z^{\uu,\uv}_{N,+,\lambda} = 
Z_\lambda 
%^{\uu ,\uv}_{N,+,\lambda} 
[\hat \calP^{\uu,\uv}_{N,+}]$. 
Finally, let us define the probability distribution
$\bbP^{\uu,\uv}_{N,+,\lambda}$
on $\calP^{\uu,\uv}_{N,+}$ by
\be{eq:pd}
\bbP^{\uu,\uv}_{N,+,\lambda} (\underline{\bbX}) 
=
\frac{1}{Z^{\uu,\uv}_{N,+,\lambda}}\,
%\mathrm{e}^{-\sum_{\ell =1}^n\sum_{-N}^N V_\lambda (X_i^\ell)
% }\sfp^\otimes 
\sfw_\lambda(\underline{\bbX}) \1_{\{\underline{\bbX} \in 
\calP^{\uu,\uv}_{N,+}\} }  .
\ee
The term $\sum_{-N+1}^N V_\lambda (X_i)$ represents a generalized (non-linear)
area below the trajectory $\bbX$. It reduces to (a multiple of) the usual area 
when $V_\lambda(x) = \lambda x$. As in~\cite{IofShlosVel2014}, we make the 
following set of assumptions on $V_\lambda $:

\subsection{Assumptions on $V_\lambda$ and the scale $H_\lambda$}

For any $\lambda>0$, the function $V_\lambda$ on $[0,\infty)$  is 
continuous, monotone increasing and 
satisfies
\be{eq:VL-1}
V_\lambda (0) = 0 \quad\text{ and }\quad \lim_{x\to\infty} V_\lambda(x) = 
\infty .
\ee
In particular, the relation
\be{eq:HL-1}
\Hla^2 V_\lambda(\Hla) = 1
\ee
determines unambiguously the quantity $\Hla$. Furthermore, we make the 
assumptions that $\lim_{\lambda\downarrow 0} \Hla =\infty$ and that there 
exists a function $q\in\sfC^2(\bbR^+)$ such that 
\be{eq:HL-2}
\lim_{\lambda\downarrow 0} \Hla^2 V_\lambda(r\Hla) = q(r),
\ee
uniformly on compact subsets of $\bbR_+$. 
Note  that $\Hla$, respectively $\Hla^2$, plays the role of the spatial, 
respectively temporal, scale in the invariance principle which is formulated 
below in Theorem~\ref{thm:A}. 

Furthermore, we shall assume that there exist $\lambda_0>0$ and a (continuous 
non-decreasing) function $q_0\geq 0$ with $\lim_{r\to\infty} q_0(r) = \infty$ 
such that, for all $\lambda\leq\lambda_0$,
\be{eq:HL-3}
\Hla^2 V_\lambda (r\Hla) \geq q_0(r) \text{ on } \bbR_+ .
\ee
Finally, we assume that $q_0$ grows to $\infty$ sufficiently fast; namely, for 
any $\kappa>0$, 
\be{eq:Extra-A} 
\int_0^\infty \mathrm{e}^{-\kappa q_0(r) }\,\dd r <\infty .
\ee
Presumably, our main results hold without assumption~\eqref{eq:Extra-A}. 
However, since it is rather soft and since it implies the claim of the 
technically very convenient Lemma~\ref{lem:empt-sum} below, we decided to keep 
it.

\begin{remark} 
A natural class of examples of family of potentials 
satisfying assumptions \eqref{eq:VL-1}-\eqref{eq:Extra-A} is given by 
$V_\lambda (x ) = \lambda x^\alpha$ with $\alpha>0$. For the latter, 
$\Hla=\lambda^{-1/(2+\alpha)}$ and $q(r)=q_0(r)=r^\alpha$. In this way, the 
case of linear area tilts $\alpha =1$ corresponds to the familiar 
Airy rescaling $H_\lambda = \lambda^{-1/3}$. 
\end{remark}

\subsection{The result}
\label{sub:Main}

We set  $h_\lambda = \Hla^{-1}$. 
The paths are rescaled as follows: For $t\in h_\lambda^2\bbZ$, define
\be{eq:xN}
\ux^\lambda (t)
= h_{\lambda} \uX_{\Hla^2 t}
= \frac{1}{H_\lambda}\uX_{\Hla^2 t} . 
%\in 
%\bbA_{n, \lambda}^+.
\ee
Then, extend $\ux^\lambda$ to any $t\in\bbR$ by linear interpolation. 
In this way, given $T>0$ and $\uu, \uv$,
%$a\in \bbZ_\lambda ;\, a>T$, and 
%$\ur, \us\in \bbA_{n , \lambda}^+$, 
we can talk about the induced distribution
$\bbP^{\uu,\uv;T}_{N,+,\lambda}$
%of
%$\bbP^{\ur,\us}_{a,+,\lambda}$ 
on the space of continuous functions $\sfC\bigl([-T,T],\bbA_n^+\bigr)$.

\begin{bigthm}
\label{thm:A}
Let $\lambda_N$ be a sequence satisfying 
\be{eq:lambdaN-cond} 
\lim_{N\to\infty}\lambda_N = 0\quad \text{ and }\quad 
\lim_{N\to\infty} a_N \df \lim_{N\to\infty}\frac{N}{H_{\lambda_N}^2} = \infty .
\ee
%Let $a_N\in \bbZ_{\lambda_N}$ be a sequence satisfying $\lim a_N = \infty$.
%Assume, furthermore, that 
%$\lim_{N\to\infty}
%Nh_{\lambda_N}^2=+\infty$. 
%Following \eqref{eq:xN} set $\uX^N(\cdot) =  \uX^{\lambda_N}(\cdot)$.
Fix any $C\in (0,\infty)$ and any $T>0$. 
Then, the sequence of distributions
%of $\ux^{\lambda_N} (\cdot)$ under 
%for any $T >0$,  as $N\to\infty$, the distributions of $\uX^N(\cdot)$ under
$\bbP^{\uu,\uv ;T}_{N,+,{\lambda_N}}$ 
converges weakly to the distribution $\bbP_n^{+ ;T}$ of the ergodic 
%Ferrari--Spohn 
diffusion $\ux(\cdot)$ in~\eqref{eq:FS-diffusion}, uniformly in $v_n, 
u_n\leq  C H_{\lambda_N}$. 
% on $\bbA_n^+$ 
%with the generator
%$\frG_n^+$ and the invariant measure 
%$\mu_n (\dd \ur ) \df \Delta^2 (\ur )\dd \ur $
%described in  
%\eqref{eq:CondForm2} and, respectively, 
%\eqref{eq:n-Kernel-det}. 
\end{bigthm}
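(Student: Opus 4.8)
The plan is to reduce the multi-particle invariance principle to a combination of (i) a single-particle invariance principle for killed/tilted walks — essentially the content of~\cite{IofShlosVel2014} — (ii) the Karlin--McGregor determinantal identity on both the discrete and continuous sides, and (iii) uniform control of the relevant partition functions via the refined random-walks-in-Weyl-chambers estimates of~\cite{DW15,DW15b}. Concretely, the discrete polymer measure $\bbP^{\uu,\uv}_{N,+,\lambda}$ on ordered trajectories is, by the Karlin--McGregor formula applied to the sets $\calA^{\uu,\uv}_{M,N,+}$, the ``ordered'' restriction of the product of $n$ i.i.d.\ tilted-and-killed walks; its partition function $Z^{\uu,\uv}_{N,+,\lambda}$ equals a determinant $\det\{Z^{u_i,v_j}_{N,+,\lambda}\}$ of single-walk partition functions (up to the usual sign bookkeeping and the identification of ordered vs.\ non-colliding configurations, which coincide away from a negligible set of diagonal collisions). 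On the continuum side, Theorem~\ref{thm:Vdm} already expresses the law $\bbP_n^+$ of the Dyson--Ferrari--Spohn diffusion through the kernel $q_t(\ur,\us)=\mathrm{e}^{\sfD_n t}\Delta(\ur)^{-1}\kappa_t(\ur,\us)\Delta(\us)$ with $\kappa_t=\det\{h_t(r_i,s_j)\}$. So the whole statement becomes: the rescaled single-walk quantities converge to the corresponding Sturm--Liouville quantities, and the determinantal structure is preserved under the limit.

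First I would set up the one-particle building blocks under the rescaling~\eqref{eq:xN}: with spatial scale $\Hla$ and temporal scale $\Hla^2$, assumptions~\eqref{eq:HL-1}--\eqref{eq:HL-2} guarantee that $\sum V_\lambda(X_i)$ rescales to $\int_0^t q(B(s))\,\dd s$, so that the tilted-killed walk started near $u=\Hla r$ converges (in the sense of finite-dimensional distributions, tightness coming from~\eqref{eq:HL-3} and~\eqref{eq:Extra-A}) to the sub-Markov process generated by $\sfL$ of~\eqref{eq:SL}; this is the content recalled from~\cite{IofShlosVel2014,HrynivVelenik09}, and it also yields a \emph{local} version, namely that $\Hla\,Z^{u,v}_{N,+,\lambda}$, suitably normalized by the leading eigenvalue $\mathrm{e}^{-\sfe_1 \cdot N h_\lambda^2}$, converges to a product $c\,\varphi_1(r)\varphi_1(s)$, and more generally that the full rescaled kernel converges to $h_t(r,s)$ of~\eqref{eq:ht} on compacts. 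The uniformity in $u_n,v_n\le CH_{\lambda_N}$ is exactly the regime where these single-walk estimates are uniform.

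Next I would lift this to $n$ particles by expanding the determinant $Z^{\uu,\uv}_{N,+,\lambda}=\det\{Z^{u_i,v_j}_{N,+,\lambda}\}$ and passing to the limit term by term, checking that the leading asymptotics of each single-walk factor combine so that, after dividing by $\mathrm{e}^{-n\sfe_1 N h_\lambda^2}$, the determinant converges to $\det\{$limiting single-walk kernels$\}$; splitting the time interval $[-N,N]$ at $\pm \Hla^2 T$ and $\pm \Hla^2 t$ and iterating the Markov property, this produces precisely the kernel $q_t(\ur,\us)$ of~\eqref{eq:qt-kernel} for the finite-dimensional distributions of $\bbP_n^{+;T}$, with the two Vandermonde-type determinants $\Delta(\ur),\Delta(\us)$ emerging at the two endpoints as the long-time limit in~\eqref{eq:Vdm}. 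Finite-dimensional convergence plus tightness of the interpolated paths $\ux^\lambda(\cdot)$ in $\sfC([-T,T],\bbA_n^+)$ — the latter following from the single-walk tightness together with an entropic-repulsion lower bound on $Z^{\uu,\uv}_{N,+,\lambda}$ keeping the particles from colliding or hitting the wall at the boundary times — gives the weak convergence.

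The main obstacle, I expect, is \emph{not} the determinantal algebra but the uniform, quantitative control of the ordered partition functions near the wall and near the collision set: one needs that the probability under the product of $n$ killed-tilted walks of staying ordered up to time $\sim \Hla^2$ decays like $\Hla^{-\gamma}$ with the \emph{correct} exponent $\gamma$ (the exponent attached to the Weyl chamber $\bbA_n^+$ with one reflecting wall), with error terms uniform over starting/ending points up to $C\Hla$, and that the diagonal collision configurations excluded in passing from $\calP$ to $\calA$ contribute negligibly after rescaling. This is where the strong-approximation machinery and the refinements of~\cite{DW15,DW15b} are essential: they supply both the polynomial decay rate for survival in cones/Weyl chambers and the coupling to Brownian motion that upgrades finite-dimensional convergence to uniformity; handling the interaction between the hard wall at $0$ and the ordering constraints simultaneously, with the non-linear tilt $V_\lambda$ present, is the technically delicate point, and it is presumably carried out in the Sections~\ref{sec:CM-Prob}--\ref{sec:lems} and the Appendix.
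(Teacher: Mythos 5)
Your overall architecture (one-particle semigroup convergence from~\cite{IofShlosVel2014}, Karlin--McGregor on both sides, control of the permuted/collision terms) matches Steps 1--2 of the paper, but there is a genuine gap at the heart of the argument: you propose to pass to the limit in the determinant $\det\{Z^{u_i,v_j}_{N,+,\lambda}\}$ with the endpoints kept at times $\pm N$, normalizing by a leading eigenvalue and letting the two boundary determinants $\Delta(\uu),\Delta(\uv)$ ``emerge as the long-time limit in~\eqref{eq:Vdm}''. This conflates two limits that must be decoupled. The discrete-to-continuum convergence of the ordered kernel (the paper's Theorem~\ref{thm:KMcG}) is only available for a \emph{fixed} rescaled time $t$ as $\lambda\downarrow 0$, whereas~\eqref{eq:Vdm} is a statement about the \emph{continuum} semigroup as $t\to\infty$; under the hypothesis $a_N=N/H_{\lambda_N}^2\to\infty$ at an arbitrary rate, nothing in your sketch controls the discrete kernel uniformly over diverging times, nor gives the claimed uniformity in $u_n,v_n\le C\Hla$. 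The paper's mechanism for this is the exponential mixing estimate (Theorem~\ref{prop:mix}), proved by a coupling of two polymer trajectories over ``good blocks'' (Lemmas~\ref{lem:good} and~\ref{lem:calM}, Section~\ref{sec:mix}); it allows one to replace, up to an error $c_1\mathrm{e}^{-c_2K}$ uniform in the boundary data, the measure with endpoints at $\pm a_N$ by the measure $\bbP^{g,f;T}_{M,+,\lambda_N}$ with a fixed finite horizon $M$, after which one may first send $\lambda\downarrow 0$ (Theorem~\ref{thm:KMcG}) and only then $M\to\infty$ (using~\eqref{eq:Vdm}). No ingredient in your proposal plays this role, and the strong-approximation results of~\cite{DW15,DW15b} alone do not supply it: they enter only as inputs to the coupling construction. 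The same omission affects your tightness sketch, which in the paper also routes through the mixing bound.

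A smaller but real error: the normalization $\mathrm{e}^{-n\sfe_1 Nh_\lambda^2}$ is wrong for the $n$-particle determinant. The rank-one leading asymptotics $\varphi_1(r)\varphi_1(s)\mathrm{e}^{-\sfe_1 t}$ of each entry cancel identically in the determinant; the correct decay rate is $\mathrm{e}^{-\sfD_n t}$ with $\sfD_n=\sum_{\ell=1}^n\sfe_\ell$, which is exactly why the first $n$ eigenfunctions, and hence the Slater determinant $\Delta$, appear in the limit. While the normalization cancels in the ratio defining the probability measure, this sign that the leading-order one-particle asymptotics are insufficient is another indication that a term-by-term passage to the limit in the expanded determinant over a diverging time horizon cannot be made to work as described.
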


\subsection{Non-strict constraints} 

In the sequel, we shall focus on the strict constraints expressed 
in~\eqref{eq:Pos-constaint1}. However, a rather straightforward modification of 
our arguments would imply that the conclusion still holds when the ordering 
in~\eqref{eq:Pos-constaint1} is non-strict, that is, when we instead require 
that
\be{eq:Pos-constaint3}
0  \leq  X^1_j \leq  X^2_j \leq \ldots \leq X^n_j\quad \forall j\in 
\{M,\ldots,N\} .
\ee
Namely, let $\calP^{\uu,\uv}_{M,N,0}$ be the family 
of $n$ trajectories $\underline{\bbX}$ starting at $\uu$ at time 
$M$, ending at $\uv$ at time $N$ and 
satisfying~\eqref{eq:Pos-constaint3}. As in the case of strict ordering, we 
use abbreviation $\calP^{\uu,\uv}_{N,0} = 
\calP^{\uu,\uv}_{-N,N,0}$. Define 
(recall~\eqref{eq:RW-measure})
\begin{align*}
Z^{\uu,\uv}_{N,0,\lambda}
&= 
\sum_{\underline{\bbX}\in\calP^{\uu,\uv}_{N,0}}
\mathrm{e}^{-\sum_{\ell=1}^n\sum_{i=-N}^N V_\lambda(X_i^\ell)}
\, \mathbf{P} (\underline{{\bbX}})
\intertext{and}
\bbP^{\uu ,\uv}_{N,0,\lambda} (\underline{\bbX} )
&=
\frac{\mathrm{e}^{-\sum_{\ell=1}^n\sum_{i=-N}^N 
V_\lambda(X_i^\ell)}{\mathbf{P}}(\underline{\bbX})}
{Z^{\uu,\uv}_{N,0,\lambda}}
\, \1_{\{\underline{\bbX}\in\calP^{\uu,\uv}_{N,0}\}} 
. 
\end{align*}
%Then, 
\begin{corollary}
\label{cor:not}
Under the same assumptions, the conclusions of Theorem~\ref{thm:A} hold for 
the family of measures $\bbP^{\uu,\uv}_{N,0,\lambda}$.
\end{corollary}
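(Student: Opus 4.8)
The plan is to deduce Corollary~\ref{cor:not} from Theorem~\ref{thm:A} by showing that replacing the strict constraints~\eqref{eq:Pos-constaint1} with the non-strict constraints~\eqref{eq:Pos-constaint3} changes neither the limiting diffusion nor the rescaled paths in the limit $\lambda_N\to 0$. The key point is that, after the $H_{\lambda_N}$-rescaling, the two path ensembles differ only through events that become negligible, because a typical rescaled configuration of the non-strict ensemble is macroscopically separated from the boundary $\partial\bbA_n^+$, and in that regime the strict and non-strict partition functions are comparable up to factors tending to~$1$.

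\smallskip
First I would set up the comparison of partition functions. Since $\calP^{\uu,\uv}_{N,+}\subset\calP^{\uu,\uv}_{N,0}$, one trivially has $Z^{\uu,\uv}_{N,+,\lambda}\leq Z^{\uu,\uv}_{N,0,\lambda}$. For the reverse direction I would decompose $\calP^{\uu,\uv}_{N,0}$ according to the (random) set of times and indices at which two consecutive trajectories touch, or at which the bottom trajectory touches $0$. The essential estimate is that, for endpoints $\uu,\uv$ with all coordinates of order $H_{\lambda_N}$ and pairwise gaps of order $H_{\lambda_N}$ (which is the only regime that carries positive mass in the limit, by Theorem~\ref{thm:A}), the contribution of configurations with at least one contact is smaller by a factor $\smo{1}$ than $Z^{\uu,\uv}_{N,+,\lambda}$. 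This is exactly the type of entropic-repulsion / separation-of-trajectories estimate that is established for the strict ensemble in the course of proving Theorem~\ref{thm:A}; I would either invoke the relevant lemmas from Sections~\ref{sec:CM-Prob}--\ref{sec:lems} directly or note that their proofs go through verbatim for the non-strict ensemble. Combined with a standard bridge/decoupling argument splitting a contact-free excursion at the contact time, this yields $Z^{\uu,\uv}_{N,0,\lambda} = Z^{\uu,\uv}_{N,+,\lambda}\bigl(1+\smo{1}\bigr)$ uniformly over the relevant endpoints.

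\smallskip
Next I would upgrade this to convergence of measures. Let $A$ be a bounded continuous functional on $\sfC\bigl([-T,T],\bbA_n^+\bigr)$. Writing $\bbE^{\uu,\uv}_{N,0,\lambda}[A(\ux^\lambda)]$ as a ratio of restricted partition functions and using the decomposition above, the numerator and denominator each factor, up to $1+\smo{1}$, through the sub-ensemble of configurations that remain strictly ordered on the macroscopic window $[-T,T]$; on that sub-ensemble the non-strict weight equals the strict weight. Hence $\bbE^{\uu,\uv}_{N,0,\lambda}[A(\ux^\lambda)] = \bbE^{\uu,\uv}_{N,+,\lambda}[A(\ux^\lambda)] + \smo{1}$, and Theorem~\ref{thm:A} gives convergence of the right-hand side to $\bbE_n^{+;T}[A(\ux)]$, uniformly in $v_n,u_n\leq CH_{\lambda_N}$. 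Tightness of $\bbP^{\uu,\uv}_{N,0,\lambda}$ follows from the same comparison together with the tightness already known for the strict ensemble.

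\smallskip
The main obstacle is the uniform lower bound on the contact-free partition function, i.e.\ controlling the entropic repulsion when endpoints are allowed to sit on the boundary of $\bbA_n^+$ (e.g.\ $u_1$ or a gap $u_{k+1}-u_k$ of order $1$ rather than order $H_{\lambda_N}$): there one must show that the walk is pushed to the bulk region within $\smo{a_N}$ time steps and pays only a bounded entropic price, uniformly in such degenerate starting configurations. This is precisely where the random-walks-in-Weyl-chambers estimates of~\cite{DW15,DW15b} and the strong-approximation bounds used for Theorem~\ref{thm:A} re-enter; since these inputs are insensitive to whether the constraint is strict or non-strict at the two endpoints, the argument carries over, but it is the one step that genuinely requires re-examining the proof of Theorem~\ref{thm:A} rather than citing its statement.
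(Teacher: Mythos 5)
The paper itself gives no argument for this corollary beyond the remark that a ``rather straightforward modification'' of the proof of Theorem~\ref{thm:A} works, so the only thing to assess is whether your route is sound. It is not, because your pivotal estimate $Z^{\uu,\uv}_{N,0,\lambda_N}=Z^{\uu,\uv}_{N,+,\lambda_N}\bigl(1+\smo{1}\bigr)$ is a \emph{global} statement over the whole horizon $\{-N,\dots,N\}$ and fails in the generality required. For strictly ordered endpoints the ratio $Z^{\uu,\uv}_{N,+,\lambda}/Z^{\uu,\uv}_{N,0,\lambda}$ is exactly the $\bbP^{\uu,\uv}_{N,0,\lambda}$-probability that no contact ($X^1_j=0$ or $X^{\ell+1}_j=X^\ell_j$) ever occurs. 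At any fixed time the rescaled gaps are of order one and the limiting density $\Delta^2$ vanishes quadratically on $\partial\bbA_n^+$, so a single-time contact costs roughly $h_{\lambda_N}^3$; but there are $2N=2a_NH_{\lambda_N}^2$ microscopic times, so the expected number of contacts is of order $a_Nh_{\lambda_N}$. Condition~\eqref{eq:lambdaN-cond} puts no upper bound on $a_N$ (e.g.\ for $V_\lambda(x)=\lambda x$ take $\lambda_N=N^{-\epsilon}$ with $\epsilon$ small, so that $a_Nh_{\lambda_N}\sim N^{1-\epsilon}\to\infty$), and by the mixing structure contacts in well-separated blocks are essentially independent, so the no-contact probability tends to $0$ rather than to $1$. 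The same conflation resurfaces in your second paragraph: the sub-ensemble of non-strict configurations that are strictly ordered \emph{on the window} $[-T,T]$ is not the strict ensemble, since contacts outside the window remain allowed, so you cannot identify the restricted numerator and denominator with their strict counterparts without exactly the global comparison that fails.

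The corollary is nevertheless true, for local rather than global reasons, and two repairs are available. The cleanest is the conjugation $Y^\ell_j=X^\ell_j+\ell$, which maps $\calP^{\uu,\uv}_{N,0}$ bijectively onto a strictly ordered, strictly positive ensemble with unchanged step weights, shifted endpoints, and the $\ell$-th path subject to the potential $V_\lambda(\cdot-\ell)$; these shifted potentials satisfy the same hypotheses with the same limits $q$ and $q_0$, and the shifts are $O(h_\lambda)$ after rescaling, so the whole proof of Theorem~\ref{thm:A} applies to this marginally modified (index-dependent) model and yields the same limit --- this is presumably the intended ``straightforward modification''. Alternatively, one must first re-establish the mixing bound of Theorem~\ref{prop:mix} for the non-strict ensemble itself, use it to reduce to a window $[-M,M]$ of \emph{fixed} length, and only there invoke your $\smo{1}$ contact bound, where it is correct. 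Either way, the step that genuinely needs re-examination is not only the degenerate endpoints you flag at the end (those are indeed covered by Corollary~\ref{inv.corr.1}, which allows $\ur^*\in\partial W$), but the fact that the strict/non-strict comparison cannot be made uniformly over the diverging horizon $a_N$.
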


\subsection{Rescaling and the corresponding notation} 
\label{sub:resc-notation}

%It would be convenient to formulate the main result in the rescaled notation:
%\paragraph{\bf Rescaling.}  
%It will be convenient to think about $\underline{\bbX}$ as being frozen 
% outside  $\{-N,
%\ldots,N\}$. In this way, we can consider $\bbP^{\uu ,
%\uv}_{N,+ ,\lambda}$
%as a distribution on the set of $n$ ordered  doubly infinite paths.
It will be  convenient to adjust our notations to the running scales 
$h_\lambda$. Define:
\be{eq:AlNl-def}
\bbN_\lambda =
h_\lambda \bbN, \quad \bbA_{n,\lambda}^+ = \bbA_n^+\cap (\bbN_\lambda)^n 
\quad \text{and}\quad \bbZ_\lambda = h_\lambda^2 \bbZ .
\ee
In this way, $\ux^\lambda (t)$ in~\eqref{eq:xN} belongs to $\bbA_{n,\lambda}^+$ 
for every $t\in\bbZ_\lambda$.

For $a,b,t\in\bbZ_\lambda$ and $\ur,\us\in\bbA_{n,\lambda}^+$, we shall write, 
with a slight abuse of notation,
\be{eq:scaling-lambda}
\calP^{\ur ,\us}_{a,b,+,\lambda} \equiv 
\calP^{\Hla\ur,\Hla\us}_{\Hla^2 a,\Hla^2 b,+,\lambda} \text{ and
similarly for }
\calP^{\ur,\us}_{t,+,\lambda},\hat\calP^{\ur,\us}_{t,+,\lambda}
\calA^{\ur,\us}_{t,+,\lambda} \text{ and } \hat\calA^{\ur,\us}_{t,+,\lambda} .
\ee
% For arbitrary $a,b,t\in\bbR$ we shall write, for instance, $\calP^{\ur 
% ,\us}_{a,b,+,\lambda} $ meaning
% \be{eq:l-conv} 
% \calP^{\ur ,\us}_{a,b,+,\lambda}  \equiv \calP^{\ur ,\us}_{h_\lambda^2 
% \lfloor % H_\lambda^2 a\rfloor , 
% h_\lambda^2 \lfloor H_\lambda^2b\rfloor ,+,\lambda} . 
% \ee
The same conventions apply to partition functions (e.g., we shall write 
$\hat Z^{\ur ,\us}_{t,+,\lambda} = 
Z_\lambda [\hat\calP^{\ur,\us}_{t,+,\lambda}]$) and for probability 
distributions (e.g., we shall write $\bbP^{\ur,\us}_{a,+,\lambda}$ for 
$a\in\bbZ_\lambda$ and $\ur,\us,\in\bbA_{n,\lambda}^+$). 

With the above notations, Theorem~\ref{thm:A} can be restated as follows: 
Let
\be{eq:lambdaN-cond-resc}
\lim_{N\to\infty} \lambda_N = 0 \quad \text{and}\quad
\lim_{N\to\infty}a_N = \infty .
\ee
Then, the family of distributions 
%of $\ux^{\lambda_N}(\cdot)$ under 
%for any $T >0$,  as $N\to\infty$, the distributions of $\uX^N(\cdot)$ under
$\bbP^{\ur,\us;T}_{a_N,+,{\lambda_N}}$ converges weakly to the distribution 
$\bbP_n^{+ ;T}$ of the ergodic 
%Ferrari--Spohn 
diffusion $\ux(\cdot)$, uniformly in $r_n,s_n\leq C$.

\medskip
Our proofs rely on the properties of the underlying rescaled random walks
(without area tilts). 
The corresponding notation for the latter follows the above convention 
adopted for polymer 
measures: 
Given $\lambda>0$ and $\ur\in\bbN_\lambda^n$, we use 
$\hat{\mathbf{P}}_\lambda^{\ur}$ for the law of the rescaled walk started at 
time zero at $\ur$. The \emph{restriction} of $\hat{\mathbf{P}}_\lambda^{\ur}$ 
to the set of trajectories which stay in $\bbA_{n,\lambda}^+$ during the 
interval $[0,t]$ is denoted by $\hat{\mathbf{P}}_{t,+,\lambda}^{\ur}$. 
%We
When the end-point $t$ is clear from the context, we will sometimes use the  
shorthand notation $\hat{\mathbf{P}}_{+,\lambda}^{\ur}$. 
% for $t=\infty$. 
Finally, given $\us\in\bbN_\lambda^n$ and $t\in\bbZ_\lambda$, we use
\be{eq:cond-RW-not}
\hat{\mathbf{P}}_{t,\lambda}^{\ur,\us} = 
\hat{\mathbf{P}}_{\lambda}^{\ur} \bigl(\,\cdot\, \bigm|
\ux^\lambda (t) = \us\bigr) \quad \text{and}\quad 
\hat{\mathbf{P}}_{t,+,\lambda}^{\ur,\us} = 
\hat{\mathbf{P}}_{+,\lambda}^{\ur} \bigl(\,\cdot\, \bigm| 
\ux^\lambda (t) = \us\bigr) .
\ee

\subsection{Structure of the argument}
\label{sub:Structure}
As $\lambda\downarrow 0$, the following notion of convergence is employed: 
Consider the spaces $\ell_2(\bbN_\lambda)$ and $\ell_2(\bbA_{n,\lambda}^+)$ 
with scalar products
\be{eq:s-l-spaces}
\langle f,g\rangle_{2,\lambda} =
h_\lambda \sum_{r\in\bbN_\lambda} f(r)g(r)
\text{ and, respectively, }
\langle f,g\rangle_{2,\lambda} = h_\lambda^n \sum_{\ur\in\bbA_{n,\lambda}^+}
f(\ur)g(\ur) .
\ee
Let $\rho_\lambda : \bbL_2(\bbR_+) \to \ell_2(\bbN_\lambda)$ and 
$\rho_{\lambda,n} : \bbL_2(\bbA_n^+) \to \ell_2(\bbA_{n,\lambda}^+)$ 
be linear contractions; for instance, to fix the ideas, set
\be{eq:rho-maps}
\rho_\lambda u( r) =
\frac1{h_\lambda} \int_{(r-h_\lambda)_+}^r u(s)\,\dd s
\text{ and }
\rho_{\lambda,n} u(\ur) =
\frac{1}{h_\lambda^n} \int_{(r_1-h_\lambda)_+}^{r_1}
\!\!\!\!
\cdots 
\int_{(r_n-h_\lambda)_+}^{r_n} u(\us) \1_{\{\us\in\bbA_n^+\}} \,\dd\us .
\ee
Above, $s_+ = s\vee 0$ for any $s\in\bbR$.
Let us say that a sequence $u_\lambda \in \ell_2(\bbN_\lambda)$ converges to
$u\in\bbL_2(\bbR_+)$, which we denote $u = \lim u_\lambda$, if
\be{eq:L-conv}
\lim_{\lambda\downarrow 0} \| u_\lambda - \rho_\lambda u\|_{2,\lambda} = 0 .
\ee
The same definition applies for sequences 
$u_\lambda\in\ell_2(\bbA_{n,\lambda}^+)$ and, accordingly, for the limiting $u$ 
in $\bbL_2(\bbA_n^+)$. Note that, in both cases, if
$u = \lim_{\lambda\downarrow 0}u_\lambda$ and 
$v = \lim_{\lambda\downarrow 0}v_\lambda$, then
\be{eq:sp-conv}
\lim_{\lambda\downarrow 0} \langle u_\lambda,v_\lambda \rangle_{2,\lambda} = 
\int_0^\infty u(r)v (r)\,\dd r
\text{, respectively, }
\lim_{\lambda\downarrow 0}\langle u_\lambda,v_\lambda \rangle_{2,\lambda} = 
\int_{\bbA_n^+} u(\ur)v(\ur)\,\dd\ur . 
\ee

\smallskip
\noindent
\step{1} (Convergence of one-dimensional and product semi-groups.) 
Recall that $\sfT_t$ is an integral operator whose kernel $h_t$ is
defined in~\eqref{eq:ht}. \cite[Proposition~3]{IofShlosVel2014} implies that if 
a sequence $f_\lambda \in\ell_2 (\bbN_\lambda)$ converges to $f\in\bbL_2 
(\bbR_+)$, then, for any $t>0$,
\be{eq:1-conv}
\lim_{\lambda\downarrow 0} \sum_{s\in\bbN_\lambda}
\hat Z_{t,+,\lambda}^{r,s} f_\lambda (s) =
\int_0^\infty  h_t(r,s) f(s)\,\dd s ,
\ee
in the sense of~\eqref{eq:L-conv} above. In particular, for any 
$f,g\in\sfC_0(\bbR_+)$, 
\be{eq:fg-one}
\lim_{\lambda\downarrow 0} h_\lambda\sum_{r\in\bbN_\lambda} 
\sum_{s\in\bbN_\lambda} g(r) \hat Z_{t,+,\lambda}^{r,s} f(s) = 
\int_0^\infty \int_0^\infty g(r) h_t(r,s)f(s)\,\dd r\dd s .
\ee
We claim:
\begin{proposition}
\label{prop:conv-n}
Assume that the sequence $f_\lambda\in\ell_2(\bbA_{n,\lambda}^+)$ converges
to $f\in
%\bbL_2 (\bbA_n^+ )
%\blue
\sfC_0 (\bbA_n^+)$. 
Let $\sigma$ be a permutation of $\{1,\ldots,n\}$.
Then, for any $t>0$,
\be{eq:conv-n} 
\lim_{\lambda\downarrow 0}
\sum_{\us\in\bbA_{n,\lambda}^+} \prod_{i=1}^n \hat 
Z_{t,+,\lambda}^{r_i,s_{\sigma_i}} f_\lambda (\us) =
\int_{\bbA_n^+} \prod_{i=1}^n h_t(r_i,s_{\sigma_i}) f(\us) \,\dd\us , 
\ee
in the sense of~\eqref{eq:L-conv} above.
In particular, let $f,g\in\sfC_0(\bbA_n^+)$. Then, for any $t>0$, 
\be{eq:conv-n-gf}
\lim_{\lambda\downarrow 0} 
h_\lambda^n \sum_{\ur\in\bbA_{n,\lambda}^+}
\sum_{\us\in\bbA_{n,\lambda}^+}
g(\ur) \prod_{i=1}^n \hat Z_{t,+,\lambda}^{r_i,s_{\sigma_i}} f(\us) =
\int_{\bbA_n^+} \int_{\bbA_n^+} g(\ur) \prod_{i=1}^n h_t(r_i,s_{\sigma_i}) 
f(\us) \,\dd\ur \dd\us .
\ee
% in the sense of \eqref{eq:L-conv} above. 
\end{proposition}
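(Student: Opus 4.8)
The plan is to derive the $n$-dimensional statement~\eqref{eq:conv-n} from the one-dimensional convergence~\eqref{eq:1-conv} by exploiting the product structure of both the rescaled killed random walk and the limiting killed Brownian motion, together with the Karlin--McGregor determinant identity that ties the ordered and the unordered (independent) pictures. First I would observe that for the independent $n$-walk killed only at the positive axis (i.e.\ without the ordering constraint), the partition function factorizes exactly: $\prod_{i=1}^n \hat Z_{t,+,\lambda}^{r_i,s_{\sigma_i}}$ is literally the weight of an $n$-tuple of mutually independent walks, the $i$-th one running from $r_i$ to $s_{\sigma_i}$ and staying positive. So the left-hand side of~\eqref{eq:conv-n}, before imposing $\us\in\bbA_n^+$, is an $n$-fold iterated sum, each factor of which is handled by~\eqref{eq:1-conv}. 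The point is to run the one-dimensional convergence one variable at a time.

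The key steps, in order, are as follows. First, reduce to the case $\sigma=\mathrm{id}$: permuting the target coordinates only relabels the summation variables $s_{\sigma_i}$, so it suffices to treat $\prod_i \hat Z_{t,+,\lambda}^{r_i,s_i}$ (the indicator $\1_{\{\us\in\bbA_n^+\}}$ is absorbed into $f_\lambda$, which we are free to assume supported in a compact subset of $\bbA_n^+$ by the hypothesis $f\in\sfC_0(\bbA_n^+)$). Second, write the $n$-fold sum over $\us\in\bbN_\lambda^n$ as an iterated sum and apply~\eqref{eq:1-conv} in the innermost variable $s_n$: for each fixed $s_1,\dots,s_{n-1}$, the sequence $\us\mapsto f_\lambda(\us)$ (extended by zero outside $\bbA_{n,\lambda}^+$) converges in $\ell_2(\bbN_\lambda)$ as a function of $s_n$ to the corresponding slice of $f$, and~\eqref{eq:1-conv} converts $\sum_{s_n} \hat Z_{t,+,\lambda}^{r_n,s_n} f_\lambda(\cdot,s_n)$ into $\int_0^\infty h_t(r_n,s_n) f(\cdot,s_n)\,\dd s_n$ in the sense of~\eqref{eq:L-conv}. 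Third, iterate: the resulting object is again of the form "one-dimensional kernel applied to a convergent sequence", so~\eqref{eq:1-conv} applies in $s_{n-1}$, and so on down to $s_1$, producing $\int_{\bbR_+^n}\prod_i h_t(r_i,s_i) f(\us)\,\dd\us = \int_{\bbA_n^+}\prod_i h_t(r_i,s_i) f(\us)\,\dd\us$ since $f$ is supported in $\bbA_n^+$. The final, bilinear form~\eqref{eq:conv-n-gf} then follows by pairing with $g\in\sfC_0(\bbA_n^+)$ and invoking~\eqref{eq:sp-conv}, exactly as~\eqref{eq:fg-one} is deduced from~\eqref{eq:1-conv}.

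The main obstacle is the bookkeeping needed to legitimately iterate~\eqref{eq:1-conv} across the $n$ variables while keeping uniform control. The convergence~\eqref{eq:1-conv} is stated for a single variable with the other data fixed, and the notion of convergence~\eqref{eq:L-conv} is an $L_2$-statement, so at each stage I need: (a) that the "partially summed" sequence is genuinely in $\ell_2(\bbA_{n-k,\lambda}^+)$ with norms bounded uniformly in $\lambda$, which follows because each $\hat Z_{t,+,\lambda}^{r,s}$ defines a contraction (it is dominated by the free random-walk transition probabilities and by~\cite[Proposition~3]{IofShlosVel2014} converges to the sub-Markov kernel $h_t$), and (b) that the limit slices are still continuous with compact support, or at least in $\bbL_2$, so that the next application of~\eqref{eq:1-conv} is licensed. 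The cleanest route is to carry the induction on $n$ with the stronger inductive hypothesis that convergence in the sense of~\eqref{eq:L-conv} holds for the partially-contracted operators, using that $h_t$ is a bounded kernel with rapid decay (a consequence of~\eqref{eq:ht} and~\eqref{eq:q-cond}, or of the domination by the Brownian heat kernel) so that integrating one variable against $h_t$ preserves membership in $\sfC_0$ up to an arbitrarily small $\bbL_2$-error. Density of $\sfC_0(\bbA_n^+)$ and the uniform contraction bound then allow one to pass from nice $f$ to the general convergent sequence $f_\lambda$ by a standard $3\varepsilon$-argument.
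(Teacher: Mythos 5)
Your strategy is sound and rests on the same two pillars as the paper's proof -- the one-dimensional convergence \eqref{eq:1-conv} and a uniform $\ell_2\to\ell_2$ bound for the product kernel -- but the way you pass from $n=1$ to general $n$ is genuinely different. The paper first drops the ordering constraint (absorbing $\1_{\bbA_n^+}$ into $f$, exactly as you do, which also disposes of $\sigma$), reduces to $f_\lambda=\rho_{\lambda,n}f$ supported in a box $[0,R]^n$, and then runs a \emph{monotone class argument}: for tensor products $\prod_i f_i(s_i)$ the $n$-fold sum factorizes exactly into $\prod_i\sum_{s_i}\hat Z^{r_i,s_i}_{t,+,\lambda}f_i(s_i)$, so \eqref{eq:1-conv} applies factor by factor with no slicing whatsoever; the extension to general bounded measurable $f$ on the box is then powered by the uniform operator bound \eqref{eq:u-l-bound}, obtained from Cauchy--Schwarz together with Lemmas \ref{lem:CLT} and \ref{lem:empt-sum}. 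Your variable-by-variable iteration buys a more "hands-on" derivation but pays for it with exactly the bookkeeping you identify in your last paragraph; the paper's detour through product test functions is designed to avoid that bookkeeping entirely.

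Two points in your iteration need repair before it is airtight. First, for a general sequence $f_\lambda$ converging in the product sense of \eqref{eq:L-conv}, it is \emph{not} true that for each fixed $(s_1,\dots,s_{n-1})$ the slice $s_n\mapsto f_\lambda(\us)$ converges in $\ell_2(\bbN_\lambda)$ to the corresponding slice of $f$: $L_2$ convergence on a product space gives no control on individual slices (and "the same slice" is not even well defined as the lattice changes with $\lambda$). You must first replace $f_\lambda$ by $\rho_{\lambda,n}f$ with $f\in\sfC_0$, justifying the replacement by the uniform operator bound -- this is the same reduction the paper makes, and you gesture at it, but it has to come \emph{before} the slicing, not after. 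Second, after one application of \eqref{eq:1-conv} the convergence in the output variable $r_n$ is only in $\ell_2$, so the next application "for fixed $r_n$" again requires a Fubini/dominated-convergence step across slices, using that all errors are supported in a fixed compact box and dominated uniformly by the contraction bound. Both repairs are available with the tools you name, so the proposal is correct in substance; it is simply a longer road to the same destination.
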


\smallskip
\noindent
\step{2} (Karlin--McGregor formula and probabilistic estimates.)
Let $\ur,\us\in\bbA_{n,\lambda}^+$. By an application of Karlin--McGregor 
formula (see~\cite[Section~5]{KMcG}), 
\be{eq:KM-lambda}
\operatorname{det}\bigl\{ \hat Z_{t,+,\lambda}^{r_i,s_j}\bigr\} = 
\sum_\sigma (-1)^{\operatorname{sgn}(\sigma)}
Z[\hat\calA_{t,+,\lambda}^{\ur,\us_\sigma}] .
\ee
Above, $(\us_\sigma)_i \equiv s_{\sigma_i}$. 

\smallskip 
Recall our notation for rescaled norms: 
$
\|f_\lambda\|_{2,\lambda}^2 = h_\lambda^n \sum_{\ur} f_\lambda^2(\ur) 
$.
We claim:
\begin{theorem}
\label{prop:CM-Prob}
 %Assume that a sequence $f_\lambda\in\ell_2 (\bbA_{n , \lambda}^+ )$ converges
  %to $f\in\bbL_2 (\bbA_n^+ )$. 
%Then: \\ 
\textbf{(a)} For any $t_0>0$ and for any non-trivial permutation
$\sigma\neq\mathsf{Id}$, 
\be{eq:sigma-triv}
\lim_{\lambda\downarrow 0}
\sum_{\us\in\bbA_{n,\lambda}^+} 
Z_\lambda[\hat\calA_{t,+,\lambda}^{\ur,\us_\sigma}] f_\lambda(\us) = 0 ,
\ee
in the sense of~\eqref{eq:L-conv}, uniformly in $t\geq t_0$ and in 
$\|f_\lambda\|_{2,\lambda} = 1$.
\\
\textbf{(b)} For any $t_0>0$,
\be{eq:Id-dif}
\lim_{\lambda\downarrow 0} \sum_{\us\in\bbA_{n,\lambda}^+}
\bigl( 
Z_\lambda[\hat\calA_{t,+,\lambda}^{\ur,\us}]
- 
Z_\lambda[\hat\calP_{t,+,\lambda}^{\ur,\us}]
%\hat Z_{t, + , \lambda}^{\ur ,\us}
\bigr) 
f_\lambda(\us) 
= 0 ,
\ee
as well, also uniformly in $t\geq t_0$ and $\|f_\lambda\|_{2,\lambda} = 1$. 
\end{theorem}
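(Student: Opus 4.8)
The plan is to reduce both statements to estimates on non-colliding rescaled random walks \emph{without} area tilt, since the tilted weights $\sfw_\lambda$ only make trajectories lighter and the limiting operator $\sfL$ already encodes the potential $q$; the combinatorial content is entirely about how much mass the constrained configurations carry. First I would record the basic one-particle input: by \cite[Proposition~3]{IofShlosVel2014} (equation~\eqref{eq:1-conv}) the rescaled constrained kernels $\hat Z^{r,s}_{t,+,\lambda}$ converge, as Hilbert--Schmidt operators, to the kernel $h_t$, and in particular for fixed $t\geq t_0>0$ the operator norms $\|\hat Z^{\,\cdot,\cdot}_{t,+,\lambda}\|$ are bounded uniformly in small $\lambda$ (uniformly in $t\geq t_0$, using the contraction semigroup property and Chapman--Kolmogorov to split off an initial slab of length $t_0$). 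Tensoring, the product operator with kernel $\prod_i \hat Z^{r_i,s_{\sigma_i}}_{t,+,\lambda}$ has uniformly bounded norm on $\ell_2(\bbA_{n,\lambda}^+)$ as well. This gives the required uniformity in $\|f_\lambda\|_{2,\lambda}=1$ once we control the \emph{difference} operators appearing in (a) and (b) in operator norm (or even just in a weak sense tested against $\sfC_0$ functions, which by density and the uniform norm bound upgrades to the $\ell_2$-statement~\eqref{eq:L-conv}).

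For part (a), the point is that $Z_\lambda[\hat\calA^{\ur,\us_\sigma}_{t,+,\lambda}]$ is the weight of \emph{ordered-positive} walks that start from the ordered tuple $\ur$ but are forced to end at the permuted tuple $\us_\sigma$ with $\sigma\neq\mathsf{Id}$; since the walks may never collide (constraint~\eqref{eq:Pos-constaint2}) yet the terminal labels are shuffled, at least two of the $n$ polymer strands must "cross" each other's terminal positions without ever sharing a site at any integer time. The plan is to show this forces an atypically large fluctuation of at least one increment over the diffusive scale $H_{\lambda_N}$, hence a probability cost that is $o(1)$ after rescaling; concretely I would use the strong-approximation / Weyl-chamber estimates of \cite{DW15,DW15b} (invoked in the introduction) to compare $Z_\lambda[\hat\calA^{\ur,\us_\sigma}_{t,+,\lambda}]$ with the Brownian analogue $\hat{\mathbf P}^{\ur}\{\uB(t)\in d\us_\sigma,\ \tau>t\}$, which vanishes identically for a non-identity permutation because Brownian paths conditioned to stay in the open Weyl chamber cannot permute their coordinates. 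The discrete correction is controlled by the coupling error in the strong approximation, uniformly in $t\geq t_0$ and in the endpoints within $\{r_n,s_n\leq C\}$.

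For part (b), the difference $Z_\lambda[\hat\calA^{\ur,\us}_{t,+,\lambda}]-Z_\lambda[\hat\calP^{\ur,\us}_{t,+,\lambda}]$ is supported on walk configurations that are positive and satisfy the \emph{weak} non-collision~\eqref{eq:Pos-constaint2} at all integer times but \emph{violate} the strict ordering~\eqref{eq:Pos-constaint1} at some interior time, i.e.\ some pair of strands swaps order at an intermediate step while staying at distinct sites; in the $h_\lambda\to0$ limit the locus $\{r_i=r_{i+1}\}$ has measure zero and the event that a diffusively rescaled walk crosses it costs an additional factor tending to $0$. Again I would dominate this by a Brownian estimate via strong approximation, or alternatively argue directly: condition on the time and location of the first order-violation, apply a reflection/exchange argument to pair such trajectories with ones that \emph{also} collide (hence are excluded from $\calA$), and bound the residual by the probability that two independent rescaled walks come within $O(h_\lambda)$ of each other, which is $O(h_\lambda)\to0$ after integrating against the bounded test functions. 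The main obstacle, in both parts, is making the strong-approximation comparison \emph{uniform} in $t\geq t_0$ and over the unbounded range of admissible endpoints $u_n,v_n\leq C H_{\lambda_N}$ while the number of steps $\Hla^2 t$ grows; this is exactly where the refinements of \cite{DW15,DW15b} are needed, and I expect the bulk of the work in Sections~\ref{sec:CM-Prob}--\ref{sec:lems} to be devoted to quantifying those coupling errors and to the harmonic-function asymptotics for random walks in the Weyl chamber $\bbA_n^+$ that underlie them.
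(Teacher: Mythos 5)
Your high-level intuition is the right one -- a non-identity permutation forces some pair of strands to swap order while remaining distinct at all lattice times, and this becomes impossible in the scaling limit -- but two of your key reductions have genuine gaps. First, the opening move ``reduce both statements to estimates on walks \emph{without} area tilt'' cannot give the claimed uniformity over $\|f_\lambda\|_{2,\lambda}=1$: the sum in~\eqref{eq:sigma-triv} runs over the \emph{unbounded} set $\bbA_{n,\lambda}^+$, and without the tilt the relevant total masses $h_\lambda\sum_r \hat Z^{r,\emptyset}_{t,+,\lambda}$ diverge (the paper explicitly warns that Lemma~\ref{lem:empt-sum} is false without Assumption~\eqref{eq:Extra-A}). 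The paper's proof of (a) proceeds by Cauchy--Schwarz together with the local CLT bound of Lemma~\ref{lem:CLT}, and then uses the tilt-induced spatial decay of Lemma~\ref{lem:empt-sum} to localize the starting points to a compact box $\{r\leq K\}$ before any crossing estimate is applied; this confinement step is essential and is missing from your plan. Second, the strong-approximation comparison ``the Brownian analogue vanishes identically because Brownian paths in the open Weyl chamber cannot permute coordinates'' does not transfer to the discrete event: the constraint~\eqref{eq:Pos-constaint2} only forbids equality at \emph{lattice times}, so the walks may jump over one another, and under a KMT coupling this event is not contained in any null Brownian event. The paper instead invokes a genuinely discrete input (Lemma~\ref{lem:two-walks-flip}, resting on Uchiyama's local theorem for the difference of two walks): two walks starting at $u<v$ in a compact set that end flipped after a macroscopic time while never coinciding at lattice times have vanishing probability. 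Note also that the mechanism is recurrence/local-limit, not an ``atypically large fluctuation on the diffusive scale'' as you suggest.

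For part (b) your description of the exceptional trajectories is correct, but the proposed bound is not: the probability that two independent rescaled walks come within $O(h_\lambda)$ of each other over a macroscopic time window is close to $1$, not $O(h_\lambda)$, so that estimate cannot close the argument. What actually makes these configurations negligible is that after the first exit from $\bbA^+_{n,\lambda}$ the configuration sits in a permuted copy of the chamber and must \emph{return} to the identity-ordered chamber by time $t$; since either the first exit time is $\leq t/2$ or the last one is $\geq t/2$, one of the two halves is a part-(a)-type event over a time interval of length at least $t/2$, and the paper concludes by decomposing on the first-exit kernel $\rho_\lambda$ (whose row and column sums are controlled by~\eqref{eq:rho-1}--\eqref{eq:rho-2}, again via Assumption~\eqref{eq:Extra-A}) and reapplying the argument of (a), in one case to the reversed walk. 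So the correct structure for (b) is a reduction to (a), not an independent proximity estimate.
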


Recall our notation 
$\hat Z_{t,+,\lambda}^{\ur,\us} = Z_\lambda[\hat\calP_{t,+,\lambda}^{\ur,\us}]$ 
and 
$\kappa_t(\ur,\us) = \operatorname{det}\{h_t(r_i,s_j)\}$. 
Proposition~\ref{prop:conv-n} and Theorem~\ref{prop:CM-Prob} imply:
\begin{theorem}
\label{thm:KMcG}
For any $t>0$ and any sequence $f_\lambda\in\ell_2(\bbA_{n,\lambda}^+)$ with 
$\lim_{\lambda\downarrow 0} f_\lambda = f$, 
\be{eq:KM-conv}
\lim_{\lambda\downarrow 0} \sum_{\us\in\bbA_{n,\lambda}^+} 
\hat Z_{t,+,\lambda}^{\ur,\us} f_\lambda(\us) =
%\int_{\bbA_n^+} \operatorname{det}\lbr h_t (r_i , s_j )\rbr f (\us )\dd\us  
%\df 
\int_{\bbA_n^+} \kappa_t(\ur,\us) f(\us) \,\dd\us . 
\ee
In particular, for any $f,g\in\sfC_0(\bbA_n^+)$,
\be{eq:KM-conv-fg}
\lim_{\lambda\downarrow 0} h_\lambda^n 
\sum_{\ur\in\bbA_{n,\lambda}^+} \sum_{\us\in\bbA_{n,\lambda}^+}
g(\ur) Z_{t,+,\lambda}^{\ur,\us} f(\us) = 
%\int_{\bbA_n^+} \operatorname{det}\lbr h_t (r_i , s_j )\rbr f (\us )\dd\us  
%\df 
\int_{\bbA_n^+} \int_{\bbA_n^+} g(\ur) \kappa_t(\ur,\us) f(\us) \,\dd\ur\dd\us 
.
\ee
\end{theorem}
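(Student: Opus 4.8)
The plan is to prove Theorem~\ref{thm:KMcG} by combining the Karlin--McGregor identity~\eqref{eq:KM-lambda} with the convergence results already in hand. First I would write, using~\eqref{eq:KM-lambda},
\be{eq:proof-expand}
\sum_{\us\in\bbA_{n,\lambda}^+} \hat Z_{t,+,\lambda}^{\ur,\us} f_\lambda(\us)
= \operatorname{det}\bigl\{ \hat Z_{t,+,\lambda}^{r_i,s_j}\bigr\}\Big|_{\text{summed against }f_\lambda}
- \sum_\sigma (-1)^{\operatorname{sgn}(\sigma)}\!\!\sum_{\us} Z_\lambda[\hat\calA_{t,+,\lambda}^{\ur,\us_\sigma}] f_\lambda(\us)
+ \sum_{\us} \bigl(Z_\lambda[\hat\calA_{t,+,\lambda}^{\ur,\us}] - Z_\lambda[\hat\calP_{t,+,\lambda}^{\ur,\us}]\bigr) f_\lambda(\us),
\ee
i.e.\ I would isolate the identity permutation in the Karlin--McGregor sum, noting that $Z_\lambda[\hat\calA_{t,+,\lambda}^{\ur,\us_{\mathsf{Id}}}] = Z_\lambda[\hat\calA_{t,+,\lambda}^{\ur,\us}]$ and that replacing $\hat\calA$ by $\hat\calP$ in the identity-permutation term produces exactly $\hat Z_{t,+,\lambda}^{\ur,\us}$. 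Rearranging, $\sum_{\us}\hat Z_{t,+,\lambda}^{\ur,\us} f_\lambda(\us)$ equals the full alternating sum $\sum_\sigma (-1)^{\operatorname{sgn}(\sigma)}\sum_{\us} Z_\lambda[\hat\calA_{t,+,\lambda}^{\ur,\us_\sigma}] f_\lambda(\us)$ plus the single correction term $\sum_{\us}(Z_\lambda[\hat\calP_{t,+,\lambda}^{\ur,\us}] - Z_\lambda[\hat\calA_{t,+,\lambda}^{\ur,\us}])f_\lambda(\us)$.

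Next, in the alternating sum over $\sigma$, Theorem~\ref{prop:CM-Prob}(a) kills every non-trivial permutation (in the sense of~\eqref{eq:L-conv}, uniformly on the unit sphere of $\ell_2(\bbA_{n,\lambda}^+)$, hence in particular along our convergent sequence $f_\lambda$), while Theorem~\ref{prop:CM-Prob}(b) kills the correction term. So only $\sigma = \mathsf{Id}$ survives, and what remains is
\be{eq:proof-remain}
\lim_{\lambda\downarrow 0}\sum_{\us\in\bbA_{n,\lambda}^+} Z_\lambda[\hat\calA_{t,+,\lambda}^{\ur,\us}] f_\lambda(\us).
\ee
To evaluate this I would again invoke Theorem~\ref{prop:CM-Prob}(b) to replace $\hat\calA$ by $\hat\calP$ with vanishing error, then rewrite $Z_\lambda[\hat\calP_{t,+,\lambda}^{\ur,\us}]$ via the Karlin--McGregor determinant~\eqref{eq:KM-lambda} once more --- but now expanding the determinant as $\sum_\sigma (-1)^{\operatorname{sgn}(\sigma)}\prod_i \hat Z_{t,+,\lambda}^{r_i,s_{\sigma_i}}$ and applying Proposition~\ref{prop:conv-n} term by term. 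Proposition~\ref{prop:conv-n} gives $\lim_\lambda \sum_{\us}\prod_i \hat Z_{t,+,\lambda}^{r_i,s_{\sigma_i}} f_\lambda(\us) = \int_{\bbA_n^+}\prod_i h_t(r_i,s_{\sigma_i}) f(\us)\,\dd\us$ in the sense of~\eqref{eq:L-conv}, and summing the signed contributions reassembles $\int_{\bbA_n^+} \operatorname{det}\{h_t(r_i,s_j)\} f(\us)\,\dd\us = \int_{\bbA_n^+}\kappa_t(\ur,\us)f(\us)\,\dd\us$, which is~\eqref{eq:KM-conv}. The second statement~\eqref{eq:KM-conv-fg} then follows by pairing against $g\in\sfC_0(\bbA_n^+)$ using~\eqref{eq:sp-conv}, since $\rho_{\lambda,n}g \to g$ and the inner sums converge in $\ell_2(\bbA_{n,\lambda}^+)$.

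One bookkeeping point that needs care: the hypotheses of Proposition~\ref{prop:conv-n} and of the final statement ask for $f\in\sfC_0(\bbA_n^+)$ (compactly supported continuous), so when I expand the determinant I am applying Proposition~\ref{prop:conv-n} to the same $f_\lambda \to f$ for each $\sigma$, which is legitimate; the permuted argument $s_{\sigma_i}$ is absorbed into the product of kernels, not into $f$. The main obstacle in this argument is not in this theorem at all --- the deduction here is a finite, purely algebraic rearrangement of the Karlin--McGregor identity combined with the two cited inputs --- but rather lies upstream, in establishing Theorem~\ref{prop:CM-Prob}: showing that non-trivial permutations contribute negligibly (part (a)) and that the difference between the "non-colliding and positive" ensemble $\hat\calA$ and the "strictly ordered" ensemble $\hat\calP$ washes out in the scaling limit (part (b)). Those are the estimates requiring the strong-approximation machinery and the random-walk-in-Weyl-chamber refinements alluded to in the introduction; granting them, the proof of Theorem~\ref{thm:KMcG} is the short synthesis sketched above.
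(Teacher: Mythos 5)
Your proposal is correct and follows exactly the route the paper intends (the paper states Theorem~\ref{thm:KMcG} as an immediate consequence of the Karlin--McGregor identity~\eqref{eq:KM-lambda}, Proposition~\ref{prop:conv-n} applied to each permutation term of the expanded determinant, and the two cancellation statements of Theorem~\ref{prop:CM-Prob}); your synthesis fills in precisely that argument. The only blemish is cosmetic: the displayed identity~\eqref{eq:proof-expand} is sloppy about which permutations appear in the subtracted sum and about the sign of the $\hat\calA$-versus-$\hat\calP$ correction, and the subsequent detour through part (b) twice is redundant, but the surrounding prose recovers the correct decomposition and the limit is evaluated correctly.
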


\noindent
\step{3} (Tightness.)
We claim: 
\begin{proposition}
\label{prop:tight}
Fix any $T>0$. Under the conditions of Theorem~\ref{thm:A}, the family
$\{\bbP_{a_N,+,\lambda_N}^{\ur,\us;T}\}$ of probability distributions on 
$\sfC([-T,T],\bbA_n^+)$ is tight.
\end{proposition}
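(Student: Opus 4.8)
The plan is to verify the two classical conditions for tightness of the laws $\bbP^{\ur,\us;T}_{a_N,+,\lambda_N}$ on $\sfC([-T,T],\bbR^n)$: tightness of a single-time marginal, and vanishing of the modulus of continuity, i.e.\ for every $\epsilon>0$ that $\lim_{\delta\downarrow0}\limsup_{N\to\infty}\bbP^{\ur,\us;T}_{a_N,+,\lambda_N}(w_\delta(\ux^{\lambda_N})\geq\epsilon)=0$, where $w_\delta$ denotes the modulus of continuity on $[-T,T]$. Since the endpoints are pinned and all coordinates of $\ux^{\lambda_N}$ are nonnegative by the wall constraint in~\eqref{eq:Pos-constaint1}, the single-time condition reduces to an \emph{upper} tail bound on the topmost coordinate $\ux^{\lambda_N}_n$. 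The main device is the Markov decomposition of the polymer measure: fixing $T_\star>T$ and writing $\uw=\ux^{\lambda_N}(-T_\star)$, $\uw'=\ux^{\lambda_N}(T_\star)$, the product structure~\eqref{eq:RW-measure} of the weights $\sfw_{\lambda_N}$ together with the constraints~\eqref{eq:Pos-constaint1} gives, for any bounded functional $\Phi$ depending only on the path over $[-T_\star,T_\star]$,
\[
\bbE^{\ur,\us}_{a_N,+,\lambda_N}\bigl[\Phi\bigr]=\bbE^{\ur,\us}_{a_N,+,\lambda_N}\Bigl[\,\bbE^{\uw,\uw'}_{T_\star,+,\lambda_N}\bigl[\Phi\bigr]\,\Bigr],
\]
the inner expectation being taken under the fixed-window polymer bridge. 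Splitting according to whether $\uw,\uw'$ have all coordinates $\leq R$ or not, this reduces the whole statement, uniformly in $N$, to: (A) the height tail $\sup_N\bbP^{\ur,\us}_{a_N,+,\lambda_N}(\sup_{[-T_\star,T_\star]}\ux^{\lambda_N}_n>R)\to0$ as $R\to\infty$; and (B) the fixed-window estimate $\lim_{\delta\downarrow0}\sup_N\sup_{\uw,\uw'}\bbP^{\uw,\uw'}_{T_\star,+,\lambda_N}(w_\delta(\ux^{\lambda_N})\geq\epsilon)=0$, the inner supremum being over $\uw,\uw'\in\bbA_n^+$ with all coordinates in $[0,R]$.

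For (A) I would argue as follows. Over a rescaled time window of unit length around a given time $t_\star$, on the event $\{\ux^{\lambda_N}_n(t_\star)>R\}$ there are two possibilities. Either $\ux^{\lambda_N}_n$ stays above $R/2$ throughout the window, in which case the generalized area $\sum V_{\lambda_N}(X^n_i)$ accumulated over the corresponding $\sim H_{\lambda_N}^2$ steps is at least of order $q_0(R/2)$ by~\eqref{eq:HL-3}, so that the $\mathrm e^{-\sum V_{\lambda_N}}$ weight is beaten by the lower bound on the window partition function furnished by Theorem~\ref{thm:KMcG}; or else $\ux^{\lambda_N}_n$ oscillates by at least $R/2$ over unit rescaled time, which means the underlying walk performs a displacement of order $R\,H_{\lambda_N}$ in $O(H_{\lambda_N}^2)$ steps, an event of probability at most $\mathrm e^{-cR^2}$ by the finite exponential moments in~\eqref{eq:Assumption}. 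Summing over a unit mesh of times $t_\star$, and using the integrability~\eqref{eq:Extra-A} (equivalently Lemma~\ref{lem:empt-sum}) to keep the area-cost contributions summable, yields (A); applied at the boundary times $\pm T_\star$, it also controls the probability that $\uw$ or $\uw'$ has a large coordinate.

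For (B), since $V_{\lambda_N}\geq0$ one has $\sfw_{\lambda_N}\leq\sfp$, so the numerator $Z_{\lambda_N}\bigl[\calP^{\uw,\uw'}_{T_\star,+}\cap\{w_\delta\geq\epsilon\}\bigr]$ is at most the probability, for the \emph{un-tilted} walk, of realizing the pinned bridge from $\uw$ to $\uw'$ subject to the ordering and positivity constraints \emph{and} exhibiting an $\epsilon$-oscillation on scale $\delta$. Dividing by $Z^{\uw,\uw'}_{T_\star,+,\lambda_N}$, which is bounded below by the tilted weight of a tube of trajectories staying ordered, positive and within an $O(1)$ rescaled neighbourhood of the interpolating segments — a tube on which the area tilt costs only $O(1)$ — reduces the ratio, up to a uniform constant, to a quotient of two constrained, pinned random-walk bridge probabilities. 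By the invariance principle for random walks in Weyl chambers, in the refined quantitative form of~\cite{DW15,DW15b}, these probabilities are two-sidedly comparable to the corresponding heat-kernel densities of Brownian motion in $\bbA_n^+$, uniformly for endpoints in a bounded region, so the ratio is asymptotically the probability that a Brownian bridge in $\bbA_n^+$ has an $\epsilon$-oscillation on scale $\delta$, which tends to $0$ as $\delta\downarrow0$ uniformly in $N$ and in the bounded endpoints, by a.s.\ uniform continuity of Brownian paths.

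The hard part will be exactly the uniform two-sided control — and, above all, the \emph{lower} bound — on the constrained, pinned bridge probability of the underlying walk when the endpoints $\uw,\uw'$ are allowed to approach the boundary of $\bbA_n^+$ (some coordinates near $0$ or nearly coinciding): this is the regime in which the generic Weyl-chamber/cone local limit theorems degenerate, and it is precisely what the refinement of~\cite{DW15,DW15b} is designed to supply. A subsidiary technical nuisance is the careful bookkeeping of the powers of $h_{\lambda_N}$ relating the lattice partition functions to their continuum normalization, which is also the place where assumption~\eqref{eq:Extra-A} is convenient.
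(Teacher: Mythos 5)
Your overall strategy (compact containment plus modulus of continuity, bounding the tilted measure by an un-tilted walk probability divided by a partition function) is in the same spirit as the paper's, but the specific decomposition is different and it leaves a genuine gap exactly where you flag it. You condition on the endpoints $\uw,\uw'$ of a \emph{fixed} window $[-T_\star,T_\star]$ and then need, in step (B), a uniform \emph{two-sided} local comparison between the constrained, pinned random-walk bridge and the Brownian bridge in $\bbA_n^+$, for endpoints ranging over all of $\bbA_{n,\lambda}^+\cap[0,R]^n$ — including endpoints with coordinates at lattice distance from the wall or from each other. In that degenerate regime the bridge probabilities acquire extra vanishing factors governed by the harmonic function of the Weyl chamber, and neither~\cite{DW15,DW15b} nor the paper's Appendix supplies such a two-sided pinned-bridge estimate: the Appendix only upgrades certain \emph{one-sided} bounds and weak limits ((I.1)--(I.3), Proposition~\ref{inv.prop.1}) for specific events and for starting points in the regular set $\Anlr$ or with $u_n\le\eta$. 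Without this input, both your lower bound on $Z^{\uw,\uw'}_{T_\star,+,\lambda_N}$ via a "tube" and the identification of the conditional oscillation probability with that of a Brownian bridge are unjustified; the same difficulty re-enters your step (A), where a lower bound on the pinned window partition function is again needed. A secondary point: the single-time tightness you set out to prove in (A) is available for free, since the finite-dimensional convergence (Proposition~\ref{prop:fdd}) is established independently of tightness and already yields convergence of the one-dimensional projections.

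The paper avoids the degenerate-endpoint problem altogether. It invokes~\cite[Theorem~8.3]{Billingsley}, taking the one-dimensional marginal convergence from Proposition~\ref{prop:fdd}, and then uses the exponential mixing estimate of Theorem~\ref{prop:mix} to replace the pinned measure $\bbP^{\ur,\us;T}_{a_N,+,\lambda_N}$ by $\bbP^{g,f;T}_{M,+,\lambda_N}$ with $g,f\in\sfC_0(\bbA_n^+)$ supported in the \emph{interior} of the chamber, at the cost $\mathrm{e}^{-c_1M}$, where the window length $M$ is a free parameter. For the smeared measure only crude bounds are needed: the numerator is dominated by the unconstrained walk's oscillation probability, which the functional CLT bounds by $\mathrm{e}^{-\gamma^2/4\delta}$, while the denominator $Z^{g,f}_{M,+,\lambda_N}\ge \mathrm{e}^{-c_2M}$ follows from Theorem~\ref{thm:KMcG} precisely because $g,f$ sit in the interior. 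The argument then closes by \emph{tuning $M$ to $\delta$} (taking $M=\gamma^2/8c_2\delta$), which beats the $\mathrm{e}^{c_2M}$ loss. This coupling of the window length to the continuity scale $\delta$, together with the mixing theorem's uniform control over arbitrary (possibly degenerate) boundary data, is the mechanism your fixed-window scheme is missing; if you want to salvage your route, you should replace your fixed $T_\star$ and pinned $\uw,\uw'$ by exactly this device.
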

 
\noindent
\step{4} (Mixing.)
We claim:
\begin{theorem}
\label{prop:mix}
% The following upper bound on variational distance holds: 
For any $C<\infty$, there exist $c_1,c_2>0$ such that, for any $K>0$,
\be{eq:mix}
\bigl\| \bbP_{a,+,\lambda}^{\ur,\us;T} - 
%\bbP_{\Hla^2 K , + ,\lambda }^T
\bbP_{b,+,\lambda}^{\uw,\uz;T} \bigr\|_{\mathrm{var}}
\leq c_1 \mathrm{e}^{-c_2 K} , 
\ee
holds uniformly in $\lambda$ small, $a,b\in\bbZ_\lambda$ with $a,b\geq (K+T)$
and uniformly in $\ur,\us,\uw,\uz\in\bbA_{n,\lambda}^+$ with 
$r_n,s_n,w_n,z_n\leq C$.
\end{theorem}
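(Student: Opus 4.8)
The statement to be proved is Theorem~\ref{prop:mix}, the mixing estimate \eqref{eq:mix} for the polymer measures on a bounded time window. The strategy is the classical coupling-of-interfaces argument adapted to ordered walks under area tilts. The plan is to reduce the variational distance between $\bbP_{a,+,\lambda}^{\ur,\us;T}$ and $\bbP_{b,+,\lambda}^{\uw,\uz;T}$ to a statement about the probability that two independent copies of the polymer, with different boundary data at times $\pm a$ and $\pm b$, fail to coincide on the entire window $[-T-K, -T]$ (and symmetrically on $[T, T+K]$). Once the two configurations agree on a full slice $\{X^1_j < \dots < X^n_j\}$ at some time $j_0 \in [-T-K,-T]$, the strong Markov property (restriction of the polymer measure) lets us glue them, so that the two laws restricted to $[-T,T]$ become identical. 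Hence $\| \bbP_{a,+,\lambda}^{\ur,\us;T} - \bbP_{b,+,\lambda}^{\uw,\uz;T} \|_{\mathrm{var}}$ is bounded by the probability of no common slice in a window of length $K$, and the task is to show this decays like $\mathrm{e}^{-c_2 K}$ uniformly in the stated parameters.

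\textbf{Key steps.} First I would set up the coupling: on the enlarged window $[-T-K, T+K]$ (assuming $a,b \geq K+T$), realize both polymer bridges as conditioned versions of the same underlying ordered-walk dynamics with area tilt, differing only through their endpoint conditioning at times $\pm a$, $\pm b$. Second, I would establish a one-step ``coalescence has positive probability'' estimate: starting from any two ordered configurations $\underline{\bbX}, \underline{\bbX}'$ with top coordinates bounded by (a constant times) $C$, the two trajectories can be coupled to meet on a common ordered slice within a bounded number $L = L(\lambda)$ of steps with probability bounded below by some $p_0 > 0$, uniformly in $\lambda$ small. This is where the hypotheses are used: the area tilt $V_\lambda$ confines the walks to heights of order $H_\lambda$, so after rescaling the configurations live in a compact region of $\bbA_n^+$; the finite-exponential-moment and irreducibility assumptions on $\sfp$ give the necessary local limit/coupling bounds for the underlying walk, and the uniform lower bound \eqref{eq:HL-3} together with \eqref{eq:Extra-A} keeps the tilt cost controlled (this is exactly the role of Lemma~\ref{lem:empt-sum}). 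The key quantitative input is that the rescaled killed/tilted kernels converge to the Ferrari--Spohn kernels $\kappa_t$ (Theorem~\ref{thm:KMcG}), which are strictly positive on $\bbA_n^+$; positivity of the limit plus tightness (Proposition~\ref{prop:tight}) gives a uniform lower bound on the probability of being in a fixed compact ``good'' set at a fixed rescaled time. Third, I would iterate: partition $[-T-K,-T]$ into roughly $K/t_1$ disjoint sub-windows of fixed rescaled length $t_1$; on each, the coalescence attempt succeeds with probability at least $p_0$ conditionally on the past (here one uses the Markov property of the polymer and the fact that, by tightness, the configurations re-enter the ``good'' compact set with probability bounded below at the start of each sub-window, uniformly in the endpoint conditioning since $a,b$ are far away). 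The number of independent-ish trials is $\gtrsim c K$, so the probability that all fail is at most $(1-p_0)^{cK} \leq c_1 \mathrm{e}^{-c_2 K}$. Summing the two one-sided failure probabilities (left and right windows) gives \eqref{eq:mix}.

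\textbf{Main obstacle.} The delicate point is making the coalescence lower bound genuinely \emph{uniform} in the far-away boundary conditions $\uu = \ur H_{\lambda_N}$, $\uv = \us H_{\lambda_N}$ (and $\uw, \uz$), and uniform in $\lambda$ small. The conditioning at times $\pm a$ pulls the trajectory's endpoints to prescribed locations, but since $a \geq K+T \to \infty$ in rescaled units (or at least stays away from the window), the influence of this conditioning on the law of the configuration restricted to the window $[-T-K,-T]$ must be controlled by a ratio of partition functions $\hat Z^{\cdot,\cdot}_{\cdot,+,\lambda_N}$; one needs these ratios to be bounded above and below uniformly, which is precisely the kind of estimate that follows from the mass-gap/renewal structure of the killed-and-tilted transfer operator and from the convergence in Theorem~\ref{thm:KMcG} combined with strict positivity of $\Delta$ and $\kappa_t$ on $\bbA_n^+$. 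A secondary technical nuisance is controlling the top trajectory $X^n$, which is not bounded a priori from above by the ordering alone — here one leans on the confining tilt (via $q_0$ and \eqref{eq:Extra-A}) and on tightness to guarantee that $X^n$ returns to $O(C H_{\lambda_N})$ at the start of each sub-window with probability bounded below. Once these uniform partition-function ratio bounds and the re-entry estimate are in hand, the iteration is routine and yields the geometric rate $\mathrm{e}^{-c_2 K}$.
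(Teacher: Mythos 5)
Your overall architecture --- return to a compact ``regular'' region, attempt a coupling there, iterate over order $K$ roughly independent attempts to get a geometric rate --- matches the paper's strategy, but two of your key steps have genuine gaps as written.

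First, circularity of inputs. You invoke Proposition~\ref{prop:tight} (tightness) and Theorem~\ref{thm:KMcG} to obtain the uniform re-entry bounds and the partition-function ratio bounds. In the paper the logical order is the reverse: tightness is \emph{deduced from} the mixing estimate (see the remark opening the proof of Proposition~\ref{prop:tight}), so it cannot be used here. Moreover, Theorem~\ref{thm:KMcG} is a qualitative convergence statement for a fixed rescaled time $t$ as $\lambda\downarrow0$ and for a fixed starting point; the mixing bound requires quantitative lower bounds uniform in $t$ on compacts, in $\lambda$ small, and in starting configurations ranging over the whole regular set. The paper obtains these from scratch via the inputs \eqref{eq:Inp1}--\eqref{eq:Inp3} (strong approximation and invariance principles for walks in Weyl chambers, proved in the Appendix) together with the partition-function lower bound of Lemma~\ref{lem:LB_PF}, which is also what controls the excursions of the top path above height $\eta$ in Lemma~\ref{lem:calM}. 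Without a substitute for these, your ``re-entry with probability bounded below'' and ``partition-function ratios bounded above and below'' claims are unsupported.

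Second, the coalescence step itself. You assert that two copies ``meet on a common ordered slice'' within bounded time with probability $p_0>0$ per attempt. For lattice configurations, two \emph{independently} sampled slices coincide with probability of order $h_\lambda^{n}\to 0$, so as stated the per-attempt success probability vanishes with $\lambda$. What rescues the argument is a maximal-coupling (equivalently, a two-sided ratio) comparison of the one-step conditional laws: this is exactly Lemma~\ref{lem:good}, which gives $c_1 h_\lambda^n\le \bbP_{0,2,+,\lambda}^{\ur,\uz}(\ux^\lambda(1)=\us\mid D_0)\le c_2 h_\lambda^n$ uniformly over the regular set, so the relevant total-variation distance is at most $1-c_1/c_2$. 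A related subtlety you pass over is that the polymer measure is a \emph{bridge}: the conditional law of the next slice given the past depends on the far endpoint, which differs between the two copies, so ``conditionally on the past each attempt succeeds with probability at least $p_0$'' is not a statement one can iterate naively. The paper resolves both points simultaneously via the recursion \eqref{eq:psi-m}--\eqref{eq:delta}: it conditions on the \emph{last} jointly good block, uses the Gibbs property to reduce to a shorter pair of bridges, and symmetrizes over the unordered pair of arrival configurations $\{\ux,\uy\}$, with the ratio bound \eqref{eq:q-coef} supplying the contraction factor $1-\delta$ per good block. Your iteration would need to be recast in this (or an equivalent maximal-coupling) form, with the good-block density controlled as in Lemma~\ref{lem:calM}, to be valid.
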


\noindent
\step{5} (Convergence of finite-dimensional distributions.)
Fix $T >0$. Let $\lambda_N\downarrow 0$ and let $a_N\in\bbZ_{\lambda_N}$ 
satisfy 
$\lim a_N = \infty$.  
%Recall our notational convention \eqref{eq:scaling-lambda} under scaling. 
Let $f,g\in\sfC_0(\bbA_n^+)$ be two non-negative and non-identically zero 
functions.
%\red{[What do you mean by non-trivial?]}
For $M\in\bbZ_{\lambda}$, $M>T$, define the partition functions (rescaled as 
in~\eqref{eq:scaling-lambda})
\be{eq:Zgf}
Z_{M,+,\lambda}^{g,f} = h_\lambda^n \sum_{\ur\in\bbA_n^+} \sum_{\us\in\bbA_n^+}
g(\ur) Z_{M,+,\lambda}^{\ur,\us} f(\us) >0
\ee
and
%for $a\in h_\lambda^2\bbN$, 
let $\bbP_{M,+,\lambda}^{g,f;T}$ be the corresponding induced probability 
distribution on $\sfC([-T,T],\bbA_n^+)$. By Theorem~\ref{prop:mix}, under the
conditions of Theorem~\ref{thm:A},
%(and using notational convention \eqref{eq:l-conv}), 
\be{eq:Klim-var}
\lim_{M\to\infty} \lim_{N\to\infty}
\bigl\| \bbP_{a_N,+,\lambda_N }^{\ur,\us;T}
- 
%\bbP_{\Hla^2 K , + ,\lambda }^T
\bbP_{M,+,\lambda_N }^{g,f;T} \bigr\|_{\mathrm{var}} = 0 ,
\ee
uniformly in $r_n,s_n\leq C$. 

Let now $-T\leq t_1 < t_2 <\ldots t_m \leq T$ and let 
$u_1,\ldots,u_m \in \sfC_0(\bbA_n^+)$.
%$ be  bounded continuous 
%functions on $\bbA_n^+$.  Fix $M >T$. 
By Theorem~\ref{thm:KMcG},
\begin{multline}
\lim_{\lambda\downarrow 0} \bbE_{M,+,\lambda}^{g,f;T} \Bigl(\prod_{i=1}^m 
u_i\bigl(\ux^\lambda(t_i)\bigr) \Bigr) = \\
\frac{ 
\int g(\ur) \int \kappa_{t_1+M}(\ur,\ur^1) u_1(\ur^1) 
%\int 
%\kappa_{t_2-t_1}(\ur^1 , \ur^2 ) u_2 (\ur^2 )
\int\ldots u_m(\ur^m) \int \kappa_{M-t_m}(\ur^m,\us) f(\us) 
\,\dd\us\dd\ur^m\cdots\dd\ur
}
{
\int\int g(\ur) \kappa_{2M}(\ur,\us) f(\us) \,\dd\us\dd\ur
} . 
\label{eq:gf-lim1}
\end{multline}
Above, all integrals are over $\bbA_n^+$. In view of~\eqref{eq:Vdm} and by the 
definition of the semigroup $\sfS_t^+$ in~\eqref{eq:Vdm-lim}, the 
formulas~\eqref{eq:Klim-var} and~\eqref{eq:gf-lim1} imply: 
\begin{proposition}
\label{prop:fdd}
Fix $T>0$, $-T \leq t_1 < t_2 <\ldots t_m \leq T$ and let $u_1,\ldots,u_m$ be
bounded continuous functions on $\bbA_n^+$. Let $\lambda_N$ and $a_N$ satisfy 
the assumptions of Theorem~\ref{thm:A}. Then,
\begin{multline}
\lim_{N\to\infty} \bbE_{a_N,+,\lambda_N}^{\ur,\us} \Bigl( \prod_{i=1}^m 
u_i\bigl(\ux^{\lambda_N}(t_i)\bigr) \Bigr) = \\
\int \!\! \Delta^2(\ur^1) u_1(\ur^1) \! \int\!\! q_{t_2-t_1}(\ur^1,\ur^2) 
u_2(\ur^2) \! \int \! \cdots \!
%\int  \!\!\!q_{t_2-t_1} (\ur^1 , \ur^2 ) u_2 (\ur^2 )
%\dots 
\int \!\! q_{t_m-t_{m-1}}(\ur^{m-1},\ur^m) \,\dd\ur^m \cdots\dd\ur^1 ,
\label{eq:n-int}
\end{multline}
uniformly in $r_n,s_n\leq C$. 
Above, $q_t$ is the transition kernel of $\sfS_t^+$, as defined 
in~\eqref{eq:qt-kernel}.
%Above $\mu (\dd\ur ) = \varphi_1^2 (\ur )\dd \ur$ and  $\int^* = 
% \int_{\bbA_n^+}$. 
\end{proposition}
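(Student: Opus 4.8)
The plan is to prove Proposition~\ref{prop:fdd} by combining the uniform mixing estimate~\eqref{eq:Klim-var} with the explicit finite-dimensional limit~\eqref{eq:gf-lim1} for the reference measures $\bbP_{M,+,\lambda}^{g,f;T}$, and then performing the long-time asymptotics $M\to\infty$ using Theorem~\ref{thm:Vdm}. First I would reduce to bounded continuous $u_i$: since the marginals of the limiting diffusion have density $\Delta^2$, which decays faster than any exponential by the confining assumptions on $q$, and since the $\bbP_{a_N,+,\lambda_N}^{\ur,\us}$-marginals are controlled by the semigroup bounds built into Theorem~\ref{prop:CM-Prob} and Proposition~\ref{prop:tight}, a routine truncation argument lets me assume $u_i\in\sfC_0(\bbA_n^+)$ (indeed this is exactly where assumption~\eqref{eq:Extra-A} and Lemma~\ref{lem:empt-sum} are meant to be used). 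Then, for fixed $M$, \eqref{eq:Klim-var} gives
\[
\lim_{N\to\infty}\bbE_{a_N,+,\lambda_N}^{\ur,\us}\Bigl(\prod_{i=1}^m u_i(\ux^{\lambda_N}(t_i))\Bigr)
=\lim_{N\to\infty}\bbE_{M,+,\lambda_N}^{g,f;T}\Bigl(\prod_{i=1}^m u_i(\ux^{\lambda_N}(t_i))\Bigr)+O(\text{error}(M)),
\]
where $\text{error}(M)\to 0$ as $M\to\infty$ uniformly in $r_n,s_n\leq C$, and the inner limit in $N$ is given by the right-hand side of~\eqref{eq:gf-lim1}.

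Second, I would take $M\to\infty$ in the explicit expression~\eqref{eq:gf-lim1}. Insert $e^{\sfD_n(t_1+M)}e^{-\sfD_n(t_1+M)}$ and the analogous factor for the last kernel, and use~\eqref{eq:Vdm}: $e^{\sfD_n M}\kappa_M(\ur,\ur^1)\to\Delta(\ur)\Delta(\ur^1)$ and $e^{\sfD_n M}\kappa_M(\ur^m,\us)\to\Delta(\ur^m)\Delta(\us)$, while the denominator satisfies $e^{2\sfD_n M}\int\!\!\int g(\ur)\kappa_{2M}(\ur,\us)f(\us)\,\dd\us\,\dd\ur\to\bigl(\int g\Delta\bigr)\bigl(\int f\Delta\bigr)$. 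The boundary factors $\int g(\ur)\Delta(\ur)\,\dd\ur$ and $\int f(\us)\Delta(\us)\,\dd\us$ cancel between numerator and denominator, the factor $e^{\sfD_n t_1}\Delta(\ur^1)$ combines with $\kappa_{t_2-t_1}(\ur^1,\ur^2)$ to produce $e^{\sfD_n(t_2-t_1)}\kappa_{t_2-t_1}(\ur^1,\ur^2)\Delta(\ur^2)/\Delta(\ur^1)\cdot\Delta^2(\ur^1)=\Delta^2(\ur^1)q_{t_2-t_1}(\ur^1,\ur^2)$ by the definition~\eqref{eq:qt-kernel} of $q_t$, and inductively the whole chain telescopes into the right-hand side of~\eqref{eq:n-int}. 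This uses only the semigroup identity $\sfS^+_{t}=\sfS^+_{t-s}\sfS^+_s$ implicit in Theorem~\ref{thm:Vdm} together with the fact that $q_t$ is a genuine transition kernel, so no new estimate is needed here.

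The main obstacle is making the interchange of the two limits $M\to\infty$ and $N\to\infty$ rigorous and uniform. The estimate~\eqref{eq:Klim-var} is stated as an iterated limit, so I would phrase the argument as: given $\varepsilon>0$, first choose $M$ so large that $\limsup_{N}\|\bbP_{a_N,+,\lambda_N}^{\ur,\us;T}-\bbP_{M,+,\lambda_N}^{g,f;T}\|_{\mathrm{var}}<\varepsilon$ uniformly in $r_n,s_n\leq C$ (possible by~\eqref{eq:Klim-var}) and also so large that the $M\to\infty$ asymptotics of~\eqref{eq:gf-lim1} are within $\varepsilon$ of~\eqref{eq:n-int}; then, with that $M$ fixed, let $N\to\infty$ using~\eqref{eq:gf-lim1}. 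Since $\varepsilon$ is arbitrary and all error bounds are uniform in $r_n,s_n\leq C$, the claim follows. A minor point to check is that the truncation of $u_i$ is uniform in $N$: this requires a tightness-type tail bound on $\ux^{\lambda_N}(t_i)$ under $\bbP_{a_N,+,\lambda_N}^{\ur,\us}$, which follows from Proposition~\ref{prop:tight} (the marginals of a tight family are tight) together with the already-available bound~\eqref{eq:Klim-var} comparing with the reference measures whose marginals are controlled by $\kappa_t$ and hence decay like $\Delta^2$.
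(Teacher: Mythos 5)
Your proposal is correct and follows essentially the same route as the paper: the paper deduces Proposition~\ref{prop:fdd} in one line from the mixing comparison~\eqref{eq:Klim-var}, the explicit limit~\eqref{eq:gf-lim1}, the asymptotics~\eqref{eq:Vdm} and the definition~\eqref{eq:qt-kernel} of $q_t$, which is exactly the combination you spell out (including the telescoping of the $\Delta$-factors and the $\varepsilon$-management of the iterated limits). Your additional truncation step reducing bounded continuous $u_i$ to $\sfC_0$ is a detail the paper leaves implicit, and your justification of it is adequate.
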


\noindent
\step{6} (Conclusion of the Proof.) 
By Proposition~\ref{prop:tight}, the sequence of measures 
$\bigl\{\bbP_{a_N,+,\lambda_N}^{\ur,\us;T};\, {r_n,s_n\leq C}\bigr\}$ on 
$\sfC([-T,T],\bbA_n^+)$ is tight for any $T>0$ fixed. By 
Proposition~\ref{prop:fdd}, its finite-dimensional distributions converge to 
the finite-dimensional distributions of the Dyson diffusion $\ux(\cdot)$ 
in~\eqref{eq:FS-diffusion}.
\qed 

\subsection{Organization of the technical part of the paper.} 
\label{sub:Org}
We still have to prove Theorem~\ref{prop:CM-Prob}, 
Propositions~\ref{prop:conv-n} and~\ref{prop:tight} and Theorem~\ref{prop:mix}. 
This will be done in Section~\ref{sec:CM-Prob}, Section~\ref{sec:props}
and, respectively, in Sections~\ref{sec:mix} and~\ref{sec:lems}. 
The proof of Theorem~\ref{prop:mix} 
 is by far the most techically loaded part of the paper,  and it 
relies on the probabilistic 
estimates~\eqref{eq:Inp1}-\eqref{eq:Inp3}, which are based on strong 
approximation techniques and on invariance principles for random walks in Weyl 
chambers. The derivation of~\eqref{eq:Inp1}-\eqref{eq:Inp3} is relegated to the 
Appendix.

\section{Proof of Theorem~\ref{prop:CM-Prob}}
\label{sec:CM-Prob} 

\subsection{Preliminary estimates.} 

Let us start with three preliminary estimates.
The first one is just a rough local CLT estimate for the underlying random 
walk without 
%which even ignores 
area tilts: Recall that whenever we write quantities like 
$\hat Z_{t,+,\lambda}^{r,s}$, we are implicitly assuming that 
$t\in\bbZ_\lambda =h_\lambda^2\bbZ$ and that $r,s\in \bbN_\lambda = h_\lambda \bbN$.
\begin{lemma}
\label{lem:CLT}
For any $t_0>0$, there exists a finite constant $c_1(t_0)$ such that
\be{eq:CLT}
\sup_{t\geq t_0} \sup_{r,s\in\bbN_\lambda} \hat Z_{t,+,\lambda}^{r,s}
\leq c_1(t_0) h_\lambda, 
\ee 
for all $\lambda$ sufficiently small.
\end{lemma}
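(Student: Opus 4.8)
The plan is to throw away both the self-potential and the hard-wall constraint and reduce the estimate to the classical local central limit theorem for the underlying random walk. Since $V_\lambda\geq 0$, we have $\sfw_\lambda(\bbX)\leq \sfp(\bbX)$ for every finite trajectory $\bbX$; moreover, dropping the requirement that $\bbX$ stay positive only enlarges the set over which we sum. Recalling that $\hat Z_{t,+,\lambda}^{r,s}$ is, in the un-rescaled variables, the $\sfw_\lambda$-mass of $\hat\calP^{\Hla r,\Hla s}_{1,\Hla^2 t,+}$, this gives
\be{eq:CLT-plan1}
\hat Z_{t,+,\lambda}^{r,s}\ \leq\ \sum_{\substack{\bbX=(X_1,\ldots,X_{\Hla^2 t})\\ X_1=\Hla r,\ X_{\Hla^2 t}=\Hla s}}\sfp(\bbX)
\ =\ \mathbf P\bigl(S_{\Hla^2 t-1}=\Hla(s-r)\bigr)\ \defby\ p^{(\Hla^2 t-1)}\bigl(\Hla(s-r)\bigr),
\ee
where $(S_m)_{m\geq 0}$ is the random walk with increment law $p_\cdot$ started at the origin and $p^{(m)}$ its $m$-step transition function. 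Here one only has to note that $t\in\bbZ_\lambda$ forces $\Hla^2 t\in\bbZ$, and that $t\geq t_0$ together with $\Hla\to\infty$ forces $\Hla^2 t-1\geq 1$ once $\lambda$ is small, so the right-hand side is well defined.

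The second ingredient is the uniform bound $\sup_{x\in\bbZ}p^{(m)}(x)\leq c\,m^{-1/2}$, valid for every $m\geq 1$ with $c=c(p)$ finite. This is standard for an irreducible, mean-zero random walk on $\bbZ$ with finite variance (and we even have finite exponential moments): it follows from Fourier inversion, $p^{(m)}(x)=\frac{1}{2\pi}\int_{-\pi}^{\pi}\hat p(\theta)^m\mathrm e^{-ix\theta}\,\dd\theta$, hence $p^{(m)}(x)\leq\frac{1}{2\pi}\int_{-\pi}^{\pi}\abs{\hat p(\theta)}^m\,\dd\theta$, together with the estimates $\abs{\hat p(\theta)}\leq \mathrm e^{-c_0\theta^2}$ near the (finitely many) points where $\abs{\hat p}=1$ and $\abs{\hat p(\theta)}$ bounded away from $1$ elsewhere; possible periodicity of $p_\cdot$ affects none of this. (Alternatively one can quote the local CLT directly.) Combining this with~\eqref{eq:CLT-plan1} and using $\Hla=h_\lambda^{-1}$,
\be{eq:CLT-plan2}
\hat Z_{t,+,\lambda}^{r,s}\ \leq\ \frac{c}{\sqrt{\Hla^2 t-1}}\ \leq\ \frac{2c}{\Hla\sqrt{t}}\ =\ \frac{2c}{\sqrt t}\,h_\lambda\ \leq\ \frac{2c}{\sqrt{t_0}}\,h_\lambda,
\ee
uniformly in $r,s\in\bbN_\lambda$ and in $t\geq t_0$, as soon as $\lambda$ is small enough that $\Hla^2 t_0\geq 2$; this is the asserted bound with $c_1(t_0)=2c/\sqrt{t_0}$.

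I do not expect any genuine obstacle here: this is a deliberately crude estimate, and the only points that deserve a word of care are (i) verifying that $\Hla^2 t$ is a bona fide positive integer, which holds by the definition of $\bbZ_\lambda$ for $\lambda$ small, and (ii) the uniform local-limit bound $\sup_x p^{(m)}(x)=O(m^{-1/2})$, which is classical. The lemma is "rough" precisely because it discards both the area tilt and the positivity constraint; the sharper information (the kernel $h_t$) is recovered only later, through \eqref{eq:1-conv} and Proposition~\ref{prop:conv-n}.
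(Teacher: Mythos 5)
Your proof is correct and follows essentially the same route as the paper: since $V_\lambda\geq 0$, one bounds $\hat Z_{t,+,\lambda}^{r,s}$ by the free random-walk transition probability $\hat{\mathbf P}_\lambda^r(x^\lambda(t)=s)$ and then invokes the uniform local-limit (concentration-function) bound $\sup_x p^{(m)}(x)=O(m^{-1/2})$, exactly as the paper does implicitly (and as it does explicitly later via Esseen's concentration inequality in~\eqref{eq:conc-bound}). Your write-up merely supplies the step-counting and Fourier details that the paper leaves unstated.
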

\noindent
Indeed, since $V_\lambda \geq 0$, $Z_{t,+,\lambda}^{r,s}\leq 
\hat{\mathbf P}_\lambda^r\lb x^\lambda (t) = s\rb$. \qed
\smallskip 

\noindent 
Next, following~\cite{IofShlosVel2014}, let us introduce
\be{eq:Zempty}
\hat Z_{t,+,\lambda}^{r,\emptyset}
=
\sum_{s\in\bbN_\lambda} \hat Z_{t,+,\lambda}^{r,s} .
\ee
\begin{lemma}
\label{lem:empt-sum}
For any $t_0>0$, there exists a finite constant $c_2(t_0)$ such that
%There exists a finite constant $c_2$, such that  
\be{eq:empt-sum}
\sup_{t \geq t_0} h_\lambda \sum_{r\in\bbN_\lambda} \hat 
Z_{t,+,\lambda}^{r,\emptyset} \leq c_2(t_0) ,
\ee
for all $\lambda$ sufficiently small. Furthermore, 
\be{eq:K-lim}
\lim_{K\to\infty} 
\sup_{t \geq t_0 } h_\lambda \sumtwo{r\in\bbN_\lambda}{r\geq K} \hat 
Z_{t,+,\lambda}^{r,\emptyset} = 0 ,
\ee
uniformly in $\lambda$ sufficiently small.
\end{lemma}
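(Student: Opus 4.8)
\textbf{Proof plan for Lemma~\ref{lem:empt-sum}.}
The plan is to bound the one-particle quantity $\hat Z_{t,+,\lambda}^{r,\emptyset}$ by comparison with two things: the pure random-walk heat kernel (via Lemma~\ref{lem:CLT} and the trivial domination $\hat Z_{t,+,\lambda}^{r,s}\leq \hat{\mathbf P}_\lambda^r(x^\lambda(t)=s)$), which controls the sum over $s$ in a bounded window of space, and the tilted weight $\mathrm{e}^{-\sum V_\lambda}$, which forces exponential decay for walks that venture far out. First I would note that, summing over all end-points $s$,
\[
\hat Z_{t,+,\lambda}^{r,\emptyset}
=\hat{\mathbf E}^r_\lambda\Bigl[\mathrm{e}^{-\sum_{i=1}^{\lfloor t/h_\lambda^2\rfloor} V_\lambda(X_i)}\,\1_{\{x^\lambda \text{ stays positive on }[0,t]\}}\Bigr],
\]
so that it suffices to estimate this single expectation for $t\geq t_0$. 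For the first half of the argument, use assumption~\eqref{eq:HL-3}: $\mathrm{e}^{-\sum_i V_\lambda(X_i)}\leq \mathrm{e}^{-q_0(x^\lambda(t_0/2))\cdot(\text{something})}$ is too crude; instead split the time interval into unit-length (in rescaled time) blocks and use that on each block the walk spends a macroscopic time near its typical position, so $\sum_i V_\lambda(X_i)\gtrsim c\,q_0(\max_{[t_0/2,t_0]} x^\lambda)$ with high probability. The cleaner route, and the one I would actually take, is to condition on the position $x^\lambda(t_0/2)=y$: by the Markov property,
\[
\hat Z_{t,+,\lambda}^{r,\emptyset}
\leq \sum_{y\in\bbN_\lambda}\hat Z_{t_0/2,+,\lambda}^{r,y}\,\hat Z_{t-t_0/2,+,\lambda}^{y,\emptyset}
\leq c_1(t_0/2)h_\lambda\sum_{y\in\bbN_\lambda}\hat Z_{t-t_0/2,+,\lambda}^{y,\emptyset},
\]
using Lemma~\ref{lem:CLT} for the first factor; this reduces~\eqref{eq:empt-sum} to showing that $h_\lambda\sum_{y}\hat Z_{s,+,\lambda}^{y,\emptyset}$ is bounded for a single $s\geq t_0/2$.

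For that reduced claim I would argue as follows. Fix $s=t_0/2$ and write $y$ for the starting point. Over the time window $[0,s]$ the rescaled walk, started at $y$, stays within distance, say, $s^{1/2}\log(1/h_\lambda)$ of $y$ except on an event of probability smaller than any power of $h_\lambda$ (Gaussian tail for the random walk, e.g.\ via the exponential-moment assumption~\eqref{eq:Assumption} and a maximal inequality). On the complementary good event, all the positions $X_i$ lie in the window $[\,(y-s^{1/2}\log(1/h_\lambda))\Hla\,,\,(y+s^{1/2}\log(1/h_\lambda))\Hla\,]$, and since $V_\lambda$ is monotone increasing, $\sum_{i} V_\lambda(X_i)\geq \tfrac{s}{2h_\lambda^2}\,V_\lambda\bigl((y-s^{1/2}\log(1/h_\lambda))_+\Hla\bigr)$ (the walk spends at least half of its $\sim s/h_\lambda^2$ steps, say, with position at least this value — or more carefully, just keep the deterministic lower bound from the minimum position over the good event). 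Rescaling via~\eqref{eq:HL-3}, $V_\lambda(z\Hla)\geq h_\lambda^2 q_0(z)$, so on the good event the weight is at most $\exp\{-\tfrac{s}{2}q_0\bigl((y-s^{1/2}\log(1/h_\lambda))_+\bigr)\}$. Hence
\[
h_\lambda\sum_{y\in\bbN_\lambda}\hat Z_{s,+,\lambda}^{y,\emptyset}
\leq h_\lambda\sum_{y\in\bbN_\lambda}\Bigl(\mathrm{e}^{-\tfrac{s}{2}q_0((y-s^{1/2}\log(1/h_\lambda))_+)}+o(h_\lambda^\infty)\Bigr),
\]
and the Riemann sum on the right converges, by~\eqref{eq:Extra-A} with $\kappa=s/2$, to $\int_0^\infty \mathrm{e}^{-\tfrac{s}{2}q_0(z)}\,\dd z<\infty$ (the shift by $s^{1/2}\log(1/h_\lambda)$ and the $o(h_\lambda^\infty)$ error are harmless since $q_0$ is eventually increasing and grows to infinity). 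This gives~\eqref{eq:empt-sum}.

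The tail statement~\eqref{eq:K-lim} follows from exactly the same computation, now tracking the restriction $r\geq K$: after the conditioning step, the mass starting below $K/2$ that reaches $y\geq K$ by time $t_0/2$ is exponentially small in $K$ (walk displacement tail), while the contribution of $y\geq K/2$ to the reduced sum $h_\lambda\sum_{y\geq K/2}\mathrm{e}^{-\tfrac{s}{2}q_0(\cdots)}$ tends to the tail integral $\int_{K/4}^\infty \mathrm{e}^{-\tfrac{s}{2}q_0(z)}\,\dd z$, which vanishes as $K\to\infty$ by~\eqref{eq:Extra-A}; both bounds are uniform in $\lambda$ small. I expect the main obstacle to be bookkeeping the two regimes cleanly — making the ``walk stays in a window'' event precise with uniform-in-$\lambda$ Gaussian tails (this is where the exponential-moment hypothesis and a Doob/Etemadi maximal inequality enter) and checking that the logarithmic fattening of the window does not spoil the Riemann-sum convergence — rather than any conceptual difficulty; the decay input~\eqref{eq:Extra-A} is tailored precisely to make the final integral finite.
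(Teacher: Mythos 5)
Your overall strategy---a pointwise bound on $\hat Z_{t,+,\lambda}^{r,\emptyset}$ that decays in the starting point $r$, obtained by playing the area tilt against a displacement estimate and then summing the resulting Riemann sum via~\eqref{eq:Extra-A}---is exactly the paper's; the paper simply records the pointwise bound $\sup_{t\geq t_0}\hat Z_{t,+,\lambda}^{r,\emptyset}\leq \mathrm{e}^{-\kappa\min\{q_0(r/2),\,r^2\}}$ and sums. But your execution has two genuine gaps. First, your confinement window has half-width $s^{1/2}\log(1/h_\lambda)$, which \emph{diverges} as $\lambda\downarrow 0$, so the shifted Riemann sum $h_\lambda\sum_y \mathrm{e}^{-\frac s2 q_0((y-s^{1/2}\log(1/h_\lambda))_+)}$ contains the contribution of the lattice points $y\leq s^{1/2}\log(1/h_\lambda)$, which is of order $s^{1/2}\log(1/h_\lambda)\,\mathrm{e}^{-\frac s2 q_0(0)}\to\infty$; the shift is not harmless, and no $y$-independent window width can work, since shrinking it to $O(1)$ turns the escape probability into a constant. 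Second, and independently, the escape (``bad event'') term is bounded only \emph{uniformly} in $y$---it carries no decay in $y$---so $h_\lambda\sum_{y\in\bbN_\lambda}o(h_\lambda^\infty)$ is a fixed quantity summed over an unbounded lattice and is not controlled. Both defects are cured at once by making the threshold proportional to the starting point, which is the paper's one idea: either the walk stays above $r/2$ throughout a time interval of length $t_0$, in which case~\eqref{eq:HL-3} yields the tilt cost $\mathrm{e}^{-\kappa q_0(r/2)}$, or it moves down by $r/2$, which by the exponential-moment assumption and a Chernoff/maximal inequality costs $\mathrm{e}^{-\kappa r^2}$ for $\lambda$ small; both terms decay in $r$ and are summable (the first by~\eqref{eq:Extra-A}), which also gives~\eqref{eq:K-lim} immediately as the tail of a convergent sum.

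A smaller logical slip: your ``reduction to a single time $s=t_0/2$'' via the Markov property and Lemma~\ref{lem:CLT} bounds $\hat Z_{t,+,\lambda}^{r,\emptyset}$ by a quantity \emph{independent of $r$}; multiplying by $h_\lambda$ and summing over the infinitely many $r\in\bbN_\lambda$ then yields nothing. The reduction you want is true, but for a one-line reason: since $V_\lambda\geq 0$ and the positivity indicator is at most one, dropping the tilt and the constraint after time $t_0$ gives $\hat Z_{t,+,\lambda}^{r,\emptyset}\leq \hat Z_{t_0,+,\lambda}^{r,\emptyset}$ for all $t\geq t_0$, so the supremum over $t$ is controlled by the single time $t_0$.
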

\begin{proof}
Note that Lemma~\ref{lem:empt-sum} is in general  wrong without the additional  
Assumption~\eqref{eq:Extra-A}.
%Presumably there is a way to go around for the proof of the
% main result, which, I believe holds 
%without \eqref{eq:Extra-A}. 
%\vskip 0.1cm
On the other hand, under Assumption~\eqref{eq:HL-3}, it is straightforward 
to 
check that there exists $\kappa = \kappa(t_0) > 0$ such that
%\red{[I can only do it with $q_0(r/2)$ in the rhs (or with $q_0(\alpha r)$ for 
%any fixed $\alpha<1$). What's the argument you have in mind? Of course, this 
%has no impact on the conclusion.]}
\be{eq:kappa-bound}
\sup_{t \geq t_0} \hat Z_{t,+,\lambda}^{r,\emptyset} \leq
\mathrm{e}^{ -\kappa \min\{q_0(r/2),r^2\} } ,
\ee
for all $\lambda$ small and all $r\in\bbN_\lambda$. Both~\eqref{eq:empt-sum} 
and~\eqref{eq:K-lim} follow now from \eqref{eq:Extra-A}. 
\end{proof}

The third estimate is again on the underlying random walk, or more precisely on 
two independent copies 
%$(\bbX,\bbY)$  
$\lb x^\lambda , y^\lambda \rb$ 
of this walk. 
Namely, 
%let 
%$cH_\lambda \geq u_1 > u_2 >0$ and 
%$\sfp^u$ denote the law of the random walk started at $u$.
%, and $\sfp^r_\lambda$ denotes the same law under 
%our rescaling $u = H_\lambda r$.  
\begin{lemma} 
\label{lem:two-walks-flip}
For any $\delta_0\in\bbR_+$ and $K\in \bbR_+$ fixed,  
\be{eq:two-walks-flip}
%\lim_{\lambda\to 0} 
\lim_{\lambda \to 0} \max_{\delta\geq \delta_0} \max_{0 < u <  v \leq K} 
\hat{\mathbf P}^u_\lambda  \otimes \hat{\mathbf P}^v_\lambda  \bigl(
x^\lambda (\delta ) > y^\lambda (\delta )\, ; \, x^\lambda (t )\neq y^\lambda (t ) \, 
\forall\,  t\in [0,\delta]\cap\bbZ_\lambda   \bigr) = 0.
\ee
\end{lemma}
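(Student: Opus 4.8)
The plan is to estimate the probability in \eqref{eq:two-walks-flip} by conditioning on the first time the two walks meet, and to show that conditionally on a meeting, a swap argument forces the event to have vanishing probability, while unconditionally the walks with $u<v$ are very unlikely to have crossed by time $\delta$ without ever touching. Concretely, let $\tau = \min\{t\in[0,\delta]\cap\bbZ_\lambda : x^\lambda(t) = y^\lambda(t)\}$ be the first collision time of the two rescaled walks. The event in \eqref{eq:two-walks-flip} is contained in $\{\tau > \delta\}\cap\{x^\lambda(\delta)>y^\lambda(\delta)\}$. Since $x^\lambda(0) = u < v = y^\lambda(0)$, having $x^\lambda(\delta) > y^\lambda(\delta)$ while never touching in between is impossible for continuous paths; but our walks are interpolated lattice walks, so the difference walk $D^\lambda(t) = x^\lambda(t) - y^\lambda(t)$ starts at $u - v < 0$ and ends at a positive value, hence by the intermediate value theorem it must hit $0$ at some $t\in(0,\delta)$. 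Thus the interpolated paths \emph{do} cross; the only way $x^\lambda(t)\neq y^\lambda(t)$ for all $t\in[0,\delta]\cap\bbZ_\lambda$ is that the crossing happens strictly between two lattice times, i.e.\ there is a lattice time $t$ with $x^\lambda(t) < y^\lambda(t)$ and the next lattice time $t + h_\lambda^2$ with $x^\lambda(t+h_\lambda^2) > y^\lambda(t+h_\lambda^2)$, the two walks "jumping over" each other.

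First I would reduce \eqref{eq:two-walks-flip} to the single-step overshoot estimate just described: summing over the (at most $\delta/h_\lambda^2 \le$ some fixed multiple of $\delta$, though in fact $\delta$ can be large — I would cap it using the exponential moment assumption together with a union bound or, better, a renewal/strong Markov argument) candidate crossing times, and using the Markov property at time $t$, the probability is bounded by a sum over lattice times $t\le\delta$ of
\[
\hat{\mathbf P}^u_\lambda\otimes\hat{\mathbf P}^v_\lambda\bigl( x^\lambda(t) < y^\lambda(t),\ x^\lambda(t+h_\lambda^2) > y^\lambda(t+h_\lambda^2) \bigr).
\]
The inner probability says that the \emph{unscaled} difference walk $X_{H_\lambda^2(t+h_\lambda^2)} - Y_{H_\lambda^2(t+h_\lambda^2)}$, which is the difference at time one lattice step later, flips sign with a single increment of magnitude at least the current gap. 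Since the one-step increment of the difference walk is $(\text{step of }x) - (\text{step of }y)$, a difference of two i.i.d.\ steps with finite exponential moments \eqref{eq:Assumption}, the probability that this increment exceeds any given value $m\ge 1$ is bounded by $C e^{-c m}$ uniformly. Therefore the event forces the \emph{integer} gap $Y_{\cdot} - X_{\cdot}$ (which is a positive integer) to be jumped over, but this happens with probability at most that of a single large increment, which is at least the summable tail — actually the key observation is that the number of lattice times at which the difference walk can be within $O(1)$ of zero (the only times at which such a flip is possible with non-negligible probability) is itself $O_{\mathbf P}(H_\lambda)$ by the local CLT, so after rescaling the total contribution is $O(h_\lambda) = o(1)$.

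More carefully, I would split according to the current gap $G = Y_{s} - X_{s}\in\bbN$ at the unscaled time $s = H_\lambda^2 t$: for a sign flip at the next step we need the increment of $X - Y$ to be $\ge G$, which costs $\le C e^{-cG}$. Summing $\sum_{t} \hat{\mathbf E}[ e^{-c G_t} \1_{G_t \ge 1}]$ over lattice times $t$, I would use the local limit theorem for the difference random walk $X - Y$ (mean zero, finite variance, so its local probabilities at any fixed unscaled value are $O(H_\lambda^{-1}t^{-1/2})$ at unscaled time $H_\lambda^2 t$, uniformly for $t\ge\delta_0$) to bound $\hat{\mathbf P}(G_t = g) \le C/(H_\lambda \sqrt{t})$ for each fixed $g\ge 1$; hence $\hat{\mathbf E}[e^{-cG_t}\1_{G_t\ge1}] \le \sum_{g\ge1} e^{-cg} \cdot C/(H_\lambda\sqrt t) \le C'/(H_\lambda\sqrt t)$. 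Summing over $t\in[\delta_0,\delta]\cap\bbZ_\lambda$ gives $\sum_t C'/(H_\lambda\sqrt t) \cdot$, and since the spacing is $h_\lambda^2 = H_\lambda^{-2}$, this Riemann sum is $\le C' H_\lambda^{-2}\cdot H_\lambda\sum_{t}\frac{h_\lambda^2}{h_\lambda^2 H_\lambda\sqrt t}$; cleaning up, the sum over $t$ of $h_\lambda^2/\sqrt t$ is $O(\sqrt\delta)$, and the prefactor is $H_\lambda^{-2}\cdot H_\lambda^{-1}\cdots$ — in any case it carries a genuinely negative power of $H_\lambda$, hence tends to $0$ as $\lambda\to0$, uniformly over $\delta_0\le\delta$ and $0<u<v\le K$ (the dependence on $u,v\le K$ enters only through the local CLT constant, which is uniform for bounded starting points). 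Handling arbitrarily large $\delta$ requires a little care — here I would use the diffusive spreading: for $t$ large the gap $G_t$ is typically of order $H_\lambda\sqrt t$, so $\hat{\mathbf E}[e^{-cG_t}\1_{G_t\ge1}]$ decays, and the tail of the sum over large $t$ is controlled by the same $1/(H_\lambda\sqrt t)$ bound which is summable-against-$h_\lambda^2$ up to any $\delta$ with total $O(\sqrt\delta)$, still beaten by the negative power of $H_\lambda$ once we also use that we only need $t$ up to the (finite, but arbitrary) value $\delta$; alternatively, one truncates at a large but fixed $T_0$ and absorbs $t\ge T_0$ using that the unscaled walks are diffusive.

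The main obstacle I expect is making the "jump over" / overshoot argument fully uniform — in $\lambda$, in the (possibly large) $\delta$, and in the starting points — without a clean off-the-shelf statement. The honest technical content is: (i) a uniform-in-$\lambda$ local CLT for the killed-free difference walk giving the $1/(H_\lambda\sqrt t)$ decay with constants uniform over $u,v\le K$; and (ii) the exponential-tail bound on a single increment of the difference walk, which is immediate from \eqref{eq:Assumption}. Everything else is bookkeeping of a convergent Riemann sum carrying a negative power of $H_\lambda$. I would therefore organize the write-up as: reduce to the sign-flip-at-one-step event; bound that event by (local probability of small gap) $\times$ (exponential tail of an increment); sum; conclude $O(h_\lambda \cdot \text{poly}(\delta))\to 0$.
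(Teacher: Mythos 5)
Your reduction to a ``jump-over'' event is the right first move, but the way you then estimate that event -- a union bound over candidate flip times using only the free local CLT -- cannot work, and your own arithmetic, carried out correctly, shows it. Per rescaled lattice time $t$ you correctly get a bound of order $1/(H_\lambda\sqrt t\,)$ for the probability that the unscaled gap is $O(1)$; but there are $\delta H_\lambda^{2}$ lattice times in $[0,\delta]$, so the sum is
$\sum_{t}C/(H_\lambda\sqrt t\,)=C H_\lambda^{-1}\cdot H_\lambda^{2}\sum_t h_\lambda^{2}t^{-1/2}\asymp H_\lambda\sqrt\delta\to\infty$.
This is not an accounting slip that can be fixed: the quantity you are summing is (up to constants) the expected number of sign changes of the free difference walk over $H_\lambda^{2}\delta$ steps, which is genuinely of order $H_\lambda\sqrt\delta$. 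Relatedly, your ``key observation'' that the number of near-zero lattice times is $O_{\mathbf P}(H_\lambda)$ does not give a contribution $O(h_\lambda)$ -- each near-zero visit contributes an $O(1)$ flip probability, so the union bound is off by a factor of order $H_\lambda^{2}$. Indeed the event \emph{``the first crossing of $0$ by the difference walk is a jump-over rather than a hit''} has probability bounded \emph{below} by a positive constant for a non--skip-free step distribution, so no argument that ignores what happens after the crossing can yield $o(1)$.

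The missing ingredient is the avoidance constraint on \emph{both} sides of the crossing, and this is exactly what the paper's one-line proof imports from Uchiyama's Theorem~1 (hitting-time and potential-kernel asymptotics for a walk killed at a single point) together with local limit estimates. Concretely: (i) the expected number of visits of the difference walk to level $-g$ \emph{before hitting $0$} is controlled by the Green function of the walk killed at $0$ and is $O(g)$, uniformly in the starting gap and the time horizon -- not $O(\sqrt{S})$ as the unconditioned local CLT would suggest; (ii) after a flip to level $d>0$ at unscaled time $s$, the walk must still avoid $0$ for the remaining $S-s$ steps, which costs $O(d/\sqrt{S-s}\,)$; (iii) the dangerous flips near the terminal time are killed by the local estimate $\mathbf P_{-m}(D_s=-g,\ \sigma_0>s)=O\lb (1+m)(1+g)s^{-3/2}\rb$, whose total contribution over the last $O(1)$ window is $O(H_\lambda^{-2})$. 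Choosing the cut-off window large first and then letting $\lambda\downarrow 0$ gives the claim, uniformly in $\delta\geq\delta_0$ and $0<u<v\leq K$. Without steps (i)--(iii), or the equivalent input from Uchiyama, the argument does not close.
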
 
\begin{proof} 
The claim  follows from~\cite[Theorem~1]{Uchi2011} and local limit asymptotics 
for random walks with exponential tails.
\end{proof}

\subsection{Proof of Theorem~\ref{prop:CM-Prob} (a).}    

Pick $f_\lambda\in\ell_2(\bbA_{n,\lambda}^+)$ with $\|f_\lambda\|_{2,\lambda}^2 
= 1$.
%h_\lambda^n \sum_{\ur}f_\lambda^2 (\ur ) 
Set  
\be{eq:u-lambda}
u_\lambda (\ur) = \sum_{\us\in\bbA_{n,\lambda}^+} 
Z_\lambda[\hat\calA_{t,+,\lambda}^{\ur,\us_\sigma}] f_\lambda(\us) . 
\ee
In order to prove~\eqref{eq:sigma-triv}, we need to check that, whenever 
$\sigma\neq\mathsf{Id}$,
\be{eq:u-lambda-0}
\lim_{\lambda\downarrow 0} h_\lambda^n \sum_{\ur\in\bbA_{n,\lambda}^+} 
u_\lambda(\ur)^2 = 0 ,
\ee
uniformly in $f$ with $\|f_\lambda\|_{2,\lambda}^2 = 1$. 
%h_\lambda^n \sum_{\ur}f_\lambda^2 (\ur ) 
By the Cauchy--Schwarz inequality and Lemma~\ref{lem:CLT}, 
\begin{align}
u_\lambda (\ur)^2 = \Bigl( \sum_{\us\in\bbA_{n,\lambda}^+} 
Z_\lambda[\hat\calA_{t,+,\lambda}^{\ur,\us_\sigma}] f_\lambda(\us) \Bigr)^2 
&\leq 
\Bigl( \sum_{\us\in\bbA_{n,\lambda}^+} 
Z[\hat\calA_{t,+,\lambda}^{\ur,\us_\sigma}] \Bigr) 
\Bigl( \sum_{\us\in\bbA_{n,\lambda}^+} 
Z_\lambda[\hat\calA_{t,+,\lambda}^{\ur,\us_\sigma}] f_\lambda^2(\us) \Bigr) 
\notag\\ 
&\leq
c_1 (t_0)^n \sum_{\us\in\bbA_{n,\lambda}^+}
Z_\lambda[\hat\calA_{t,+,\lambda}^{\ur,\us_\sigma}] .
\label{eq:CSch}
\end{align}
If $\sigma\neq\mathsf{Id}$, then there exist $i<j$ such that
$\sigma_i > \sigma_j$. In this case,
\[
\sum_{\us\in\bbA_{n,\lambda}^+} 
Z_\lambda[\hat\calA_{t,+,\lambda}^{\ur,\us_\sigma}]
\leq
\chi^+_{t,\lambda} (r_i,r_j) \prod_{k\neq i,j} Z_{t,+,\lambda}^{r_k,\emptyset} 
, 
\]
where $Z_{t,+,\lambda}^{r,\emptyset}$ were defined in~\eqref{eq:Zempty} and,  
for $\ur,\us\in\bbA_{2,\lambda}^+$, we define $\us^* = (s_1,s_2)^* = (s_2,s_1)$ 
and 
\be{eq:chi-plus}
\chi^+_{t,\lambda} (\ur) = \sum_{\us\in\bbA_{2,\lambda}^+}
Z_\lambda[\hat\calA_{t,+,\lambda}^{\ur,\us^*}] .
\ee
In view of Lemma~\ref{lem:empt-sum}, \eqref{eq:u-lambda-0} would follow once we 
check that 
\be{eq:A-tocheck}
\lim_{\lambda\downarrow 0} h_\lambda^2 \sum_{\ur\in\bbA_{2,\lambda}^+} 
\chi^+_{t,\lambda}(\ur) = 0 .
\ee
Given $K>0$, let us define
\be{eq:eps-K}
\epsilon_\lambda (K) = 
\sup_t h_\lambda \sumtwo{r\in\bbN_\lambda}{r\geq K} \hat 
Z_{t,+,\lambda}^{r,\emptyset} .
\ee
By~\eqref{eq:empt-sum} of Lemma~\ref{lem:empt-sum}, 
\be{eq:chi-bound}
h_\lambda^2 \sum_{\ur\in\bbA_{2,\lambda}^+} \chi^+_{t,\lambda}(\ur) \leq 
\epsilon_\lambda^2(K) + 2\epsilon_\lambda(K) c_2(t_0) + 
h_\lambda^2 \sumtwo{\ur\in\bbA_{2,\lambda}^+}{\mathclap{0 < r_1 < r_2 \leq K}} 
\chi^+_{t,\lambda}(\ur) .
\ee
Next, by~\eqref{eq:K-lim} of Lemma~\ref{lem:empt-sum}, the term 
$\epsilon_\lambda(K) \to 0$ as $K$ tends to infinity, uniformly in $\lambda$ 
small enough. Moreover, choosing 
%$m = KH_\lambda$ and 
$\delta = \frac{t}{K^2} 
\geq \frac{t_0}{K^2} \df \delta_0$, we infer from 
Lemma~\ref{lem:two-walks-flip} 
that
\be{eq:max-bound}
\lim_{\lambda\downarrow 0} \sup_{t\geq t_0} \max_{0 < r_1 < r_2 \leq K} 
\chi^+_{t,\lambda}(\ur) = 0, 
\ee
and hence the third term in~\eqref{eq:chi-bound} tends to zero (as $\lambda$ 
tends to $0$) for any $K$ fixed. \eqref{eq:A-tocheck} follows. 
\qed

\subsection{Proof of Theorem~\ref{prop:CM-Prob} (b).}   

Fix $t_0>0$. We should check that
\be{eq:Id-dif-1}
\lim_{\lambda\downarrow 0} 
\sup_{\|g_\lambda\|_{2,\lambda} = \|f_\lambda\|_{2,\lambda} = 1} 
h_\lambda^n \sum_{\ur,\us\in\bbA_{n,\lambda}^+}
g_\lambda(\ur)
Z_\lambda[\hat\calA_{t,+,\lambda}^{\ur,\us} 
\setminus \hat\calP_{t,+,\lambda}^{\ur,\us}]
f_\lambda(\us)
= 0, 
\ee
uniformly in $t>t_0$. 
By definition, any path $\ux^\lambda\in\hat\calA_{t,+,\lambda}^{\ur,\us} 
\setminus \hat\calP_{t,+,\lambda}^{\ur,\us}$ 
(of the random walk in discrete $\bbZ_\lambda$-time, rescaled as 
in~\eqref{eq:xN})
has to exit $\bbA_{n,\lambda}^+$ on its way from $\ur$ to $\us$. 
Let $\tau_-$ and $\tau_+$, $\tau_\pm \in \bbZ_\lambda$,  be, respectively, the 
times of the first and the last visits to $\{\bbA_{n,\lambda }^+\}^c$.
Again, by definition of $\hat\calA_{t,+,\lambda}^{\ur,\us}$, the points 
$\ux^\lambda(\tau_{\pm})$ belong to
\[
[\bbA_{n,\lambda}^+]_{\sigma_{\pm}} = \{ \underline{\sfw}_{\sigma_\pm} \,:\, 
\underline{\sfw} \in \bbA_{n,\lambda}^+ \} ,
\]
for some permutation $\sigma_{\pm}\neq\mathrm{Id}$, which of course depends 
on the particular realization of $\ux^\lambda$.  Since either 
\[
\text{(i) } \tau_- \leq t/2 \quad\text{or}\quad
\text{(ii) } t - \tau_+ \leq t/2 ,
\]
\eqref{eq:Id-dif-1} follows by the same arguments as employed for the proof 
of~\eqref{eq:sigma-triv} (although, in case (ii), the latter should be applied 
to the reversed walk).
 
Indeed, let us fix a permutation $\sigma\neq\mathrm{Id}$. Consider the 
following modification of~\eqref{eq:u-lambda}: Set $\eta = \sigma^{-1}$ and 
\be{eq:u-lambda1}
u_\lambda (\ur) = \max_{u\in [t/2,t]} \sum_{\us\in\bbA_{n,\lambda}^+}
Z_\lambda[\hat\calA_{u,+,\lambda}^{\ur,\us_\eta}] f_\lambda(\us) . 
\ee 
For $\ur,\us\in\bbA_{n,\lambda}^+$ define, 
\[
\rho_{\lambda} (\ur,\us) =
Z_\lambda\bigl[ \ux^\lambda(0) = \ur,\, \ux^\lambda(\tau_-) = \us_\sigma \bigr] 
.
\]
Above, $\{ \ux^\lambda(0) = \ur,\, \ux^\lambda(\tau_-) = \us_\sigma \}$ is the 
set of trajectories started at time zero in $\ur$ and arriving, at their first 
exit from $\bbA_{n,\lambda}^+$, to the point $\us_\sigma\in 
[\bbA_{n,\lambda}^+]_\sigma$.
Clearly, for any $\lambda\geq 0$, 
\be{eq:rho-1}
\sum_{\us} \rho_{\lambda}(\ur,\us) \leq 1. 
\ee
Furthermore, under Assumption~\eqref{eq:Extra-A}, there exists a constant $c_3$ 
such that
\be{eq:rho-2} 
\sum_{\ur} \rho_{\lambda}(\ur,\us) \leq c_3, 
\ee
for all $\lambda$ small enough. 
%Indeed, first of all in view of the exponential tails of $\sfp$; 
% Assumption~\ref{eq:Assumption},  we may assume 
%that during the time interval  none of the $n$ walks makes
% jumps larger that $H_\lambda$

We can now bound from above the contribution of (i) with 
$
%\bbX_{ }
x^\lambda (\tau_-)\in [\bbA_{n,\lambda}^+]_\sigma$ to the sum 
in~\eqref{eq:Id-dif-1}: applying the Cauchy--Schwarz inequality and the 
bounds~\eqref{eq:rho-1} and~\eqref{eq:rho-2},
\[
h_\lambda^n \sum_{\ur,\us}
g_{\lambda}(\ur) \rho_{\lambda}(\ur,\us) u_\lambda(\us)
\leq
\sqrt{c_3} \|g_\lambda\|_{2,\lambda} \|u_\lambda\|_{2,\lambda}
\]
and one can proceed as in the Proof of Theorem~\ref{prop:CM-Prob} (a) to show 
that $\lim_{\lambda\downarrow 0} \|u_\lambda\|_{2,\lambda} = 0$, uniformly in 
$f$ such that $\|f_\lambda\|_{2,\lambda}\leq 1$.
\qed

\section{Proof of Propositions~\ref{prop:conv-n} and~\ref{prop:tight} } 
\label{sec:props}

\subsection{Proof of Proposition~\ref{prop:conv-n}} 

Let us consider $f\in\sfC_0(\bbR_+^n)$ and 
$f_\lambda\in\ell_{2,\lambda}(\bbN_\lambda^n)$.
The convergence $\lim_{\lambda\downarrow 0} f_\lambda = f$ is still defined 
via~\eqref{eq:L-conv} with 
\be{eq:rho-mapsFull}  
\rho_{\lambda,n} u(\ur) = \frac{1}{h_\lambda^n} \int_{(r_1-h_\lambda)_+}^{r_1}
\hspace*{-7mm} \cdots \hspace{2mm} \int_{(r_n-h_\lambda)_+}^{r_n} u(\us)
%\1_{\lbr \us\in\bbA_n^+\rbr } 
\,\dd\us .
\ee
If $u_\lambda\in\ell_2(\bbN_\lambda^n)$ converges to $u\in\bbL_2(\bbR_+^n)$, 
then, evidently, $\tilde u_\lambda \df u_\lambda\1_{\Anl}\!\!\!\in\ell_2(\Anl)$
converges to $\tilde u\df u\1_{\An}\!\in\bbL_2(\An)$. 
Hence, \eqref{eq:conv-n} will follow if we check that 
\be{eq:conv-n-Full} 
\lim_{\lambda\downarrow 0} \sum_{\us\in\bbN_\lambda^n}
\prod_{i=1}^n \hat Z_{t,+,\lambda}^{r_i,s_i} f_\lambda(\us)
=
\int_{\bbR_+^n} \prod_{i=1}^n h_t(r_i,s_i) f(\us) \dd\us ,
\ee 
whenever $f\in\sfC_0(\bbR_+^n)$ and $\lim_{\lambda\downarrow 0} f_\lambda = 
f$. 
 
Next, we may assume without loss of generality that $f_\lambda = 
\rho_{\lambda,n}f$. Hence, there exists $R>0$ such that both $f$ and $f_\lambda$
vanish for $r\not\in [0,R]^n \df \Omega_R$. In other words, we can restrict 
our attention to $f_\lambda\in\ell_{2,\lambda}(\bbN_\lambda^n\cap\Omega_R)$ and 
$f\in\sfC_0(\Omega_R)$. 
 
The rest is a monotone class argument based on~\eqref{eq:1-conv}: 
 %of  \cite{IofShlosVel2014}:
Let $\calH_R$ be the family of bounded measurable functions on $\Omega_R$
such that~\eqref{eq:conv-n-Full} holds. By~\eqref{eq:1-conv}, the family 
$\calH_R$ contains all the products $\prod_{i=1}^n f_i(r_i)$ of bounded and 
measurable functions $f_1,\ldots,f_n$ on $[0,R]$. In particular, 
$\1_{\Omega_R}\in\calH_R$. Next, by linearity, $f,g\in\calH_R$ clearly implies 
that $af+bg\in\calH_R$ for any $a,b\in\bbR$. Finally, if 
\[
0 \leq f^{(1)} \leq f^{(2)} \leq \ldots 
\]
is a non-decreasing family of functions from $\calH_R$ and if $f=\lim f^{(k)}$
exists and is bounded, then $\lim_{k\to\infty} \|f - f^{(k)}\|_2 = 0$. 
Since $\rho_{\lambda,n}$ are contractions,
$\|f_\lambda - f^{(k)}_\lambda\|_{2,\lambda} \leq \|f - f^{(k)}\|_2$ for all 
$\lambda>0$.
On the one hand, in view of Lemma~\ref{lem:CLT} and~\eqref{lem:empt-sum}, an 
application of the Cauchy--Schwarz inequality yields
\begin{align}
h_\lambda^n \sum_{\ur} \Bigl( \sum_{\us} \prod_{i=1}^n
\hat Z_{t,+,\lambda}^{r_i,s_i} u_\lambda(\us) \Bigr)^2 
&\leq
\Bigl( h_\lambda^n \sum_{\ur,\us} \prod_{i=1}^n \hat 
Z_{t,+,\lambda}^{r_i,s_i} \Bigr) \Bigl( c_1(t_0)^n h_\lambda^n \sum_{\us} 
u_\lambda(\us)^2 \Bigr) \notag\\ 
&\leq
\bigl(c_1(t_0) c_2(t_0)\bigr)^n \|u_\lambda\|_{2,\lambda}^2 ,
\label{eq:u-l-bound}
\end{align}
uniformly in $t\geq t_0$. In particular, 
\[
\bigl\| 
\sum_{\us} \prod_{i=1}^n \hat Z_{t,+,\lambda}^{r_i,s_i}
(f_\lambda(\us) - f^{(k)}_\lambda(\us))
\bigr\|_{2,\lambda}
\leq
\bigl(c_1(t_0) c_2 (t_0)\bigr)^n \|f - f^{(k)}\|_2 ,
\]
uniformly in $t\geq t_0$. On the other hand,
\[
\bigl\|
\int_{\bbR_+^n} \prod_{i=1}^n h_t(r_i,s_i) \bigl( f(\us) - f^{(k)}(\us) \bigr) 
\,\dd\us \bigr\|_2
\leq
\|f - f^{(k)}\|_2 .
\]
\eqref{eq:conv-n-Full} follows, for instance, by a diagonal procedure.  
 
\subsection{Proof of Proposition~\ref{prop:tight}}

Note that our proof of Theorem~\ref{prop:mix} below, and hence our proof of 
Proposition~\ref{prop:fdd}, does not rely on the tightness property which we 
are trying to establish here.
%In view of the convergence of finite dimensional distributions, 
By Proposition~\ref{prop:fdd} the one-dimensional projections of
$\mathbb{P}^{\ur,\us;T}_{a_N,+,\lambda_N}$, that is, the
distributions of $\ux^{\lambda_N}(t)$ for each fixed $\abs{t}\leq T$,
converge.
%are tight. 

Then, according to~\cite[Theorem~8.3]{Billingsley},
%\red{[Actually, the numbering has changed in the second edition (it is the 
%corollary to Theorem~7.4 now). I still refer to the first edition in the bib 
%file.] Dima: Fine, as far as I am concerned. }
the family $\{\mathbb{P}^{\ur,\us;T}_{a_N,+,\lambda_N}\}$
is tight if for all positive $\gamma$ and $\beta$ there exist $\delta\in(0,1)$ 
and 
$N_0$ such that, uniformly in $t\in[-T,T]$,
\begin{equation}
\label{tightness.1}
\mathbb{P}^{\ur,\us;T}_{a_N,+,\lambda_N}
\bigl( \sup_{s\in[t,t+\delta]}
|\ux^{\lambda_N}(s) - \ux^{\lambda_N}(t)|
\geq \gamma \bigr)
\leq
\delta\beta, \quad N\geq N_0 .
\end{equation}
%\red{[I changed $q$ into $\beta$ as the former is already used in another 
%context.]}
Since $a_N$ tends to infinity, it suffices to prove~\eqref{tightness.1} for
$t=0$.

Recall that $T$ is fixed. We may assume that $a_N\gg T$. Now, the exponential 
mixing bound  
%taking into account~
%\eqref{eq:Klim-var} \red{[Or would you prefer to 
%refer to~
\eqref{eq:mix} implies that the following holds uniformly in $M\geq 2T$:  
%where the exponential rate is explicit?]}, one has, 
%for all $N$ sufficiently large,
\begin{multline}
\mathbb{P}^{\ur,\us;T}_{a_N,+,\lambda_N}
\bigl( \sup_{s\in[0,\delta]}
|\ux^{\lambda_N}(s) - \ux^{\lambda_N}(0)|
\geq \gamma \bigr) \\
\leq
e^{-c_1 M} +
\mathbb{P}^{g,f;T}_{M,+,\lambda_N} \bigl( \sup_{s\in[0,\delta]}
|\ux^{\lambda_N}(s) - \ux^{\lambda_N}(0)|
\geq \gamma \bigr) .
\label{tightness.2}
\end{multline}
Since the potentials $V_{\lambda_N}$ in the definition of tilted measures 
are non-negative, the latter probability is controlled in terms of the 
underlying random walk:
\[
\mathbb{P}^{g,f;T}_{M,+,\lambda_N} \bigl( \sup_{s\in[0,\delta]}
|\ux^{\lambda_N}(s) - \ux^{\lambda_N}(0)|
\geq \gamma \bigr)
\leq
\frac{1}{Z^{g,f}_{M,+,\lambda_N}}
\mathbf{P}_{\lambda_N}\bigl( \sup_{s\in[0,\delta]}
|\ux^{\lambda_N}(s) - \ux^{\lambda_N}(0)|
\geq \gamma \bigr) .
\]
It follows from Theorem~\ref{thm:KMcG} and the definition of the kernel 
$\kappa_t$ that there exists $c_2=c_2(g,f)$ such that
\[
Z^{g,f}_{M,+,\lambda_N} \geq e^{-c_2 M} .
\]
From these estimates and~\eqref{tightness.2}, we conclude that
\begin{multline*}
\mathbb{P}^{\ur,\us;T}_{a_N,+,\lambda_N}
\bigl( \sup_{s\in[0,\delta]}
|\ux^{\lambda_N}(s) - \ux^{\lambda_N}(0)|
\geq \gamma \bigr) \\
\leq
e^{-c_1 M} + e^{c_2 M} \mathbf{P}_{\lambda_N} \bigl( \sup_{s\in[0,\delta]}
|\ux^{\lambda_N}(s) - \ux^{\lambda_N}(0)|
\geq \gamma \bigr) .
\end{multline*}
An application of the standard functional CLT (recall our assumption 
\eqref{eq:sigma} on unit variance) 
yields the inequality
\[
\limsup_{N\to\infty} \mathbf{P}_{\lambda_N} \bigl( \sup_{s\in[0,\delta]}
|\ux^{\lambda_N}(s) - \ux^{\lambda_N}(0)|
\geq \gamma \bigr)
\leq
e^{-\gamma^2/4 \delta} .
\]
%\red{[Ok, but note that, for the moment, we have set $\sigma^2=1$ 
%(see~\eqref{eq:sigma}).}
Consequently,
\[
\limsup_{N\to\infty}\mathbb{P}^{\ur,\us;T}_{a_N,+,\lambda_N}
\bigl( \sup_{s\in[0,\delta]}
|\ux^{\lambda_N}(s) - \ux^{\lambda_N}(0)|
\geq \gamma \bigr)
\leq
e^{-c_1 M} + e^{c_2 M-\gamma^2/4
%\sigma^2
\delta} .
\]
Choosing $M=\frac{\gamma^2}{8c_2 
%\sigma^2
\delta}$ (and assuming that the parameters are tuned in such a way that 
$M\geq 2T$), we finally obtain
\[
\limsup_{N\to\infty} 
\mathbb{P}^{\ur,\us;T}_{a_N,+,\lambda_N}
\bigl( \sup_{s\in[0,\delta]}
|\ux^{\lambda_N}(s) - \ux^{\lambda_N}(0)|
\geq \gamma \bigr)
\leq
2 e^{-c_3\gamma^2/8
%\sigma^2
\delta} ,
\]
where $c_3=\min\{\frac{c_1}{c_2},1\}$. Thus, \eqref{tightness.1} is proved.

\section{Proof of Theorem~\ref{prop:mix}} 
\label{sec:mix} 

Throughout this section, we shall assume that $\Hla^2\in\bbN$; this implies 
that $\bbZ\subset\bbZ_\lambda$. In particular, the values of rescaled walks 
$\ux^\lambda(\ell)$ in discrete $\bbZ_\lambda$-time are well defined for any 
$\ell\in\bbZ$.

\subsection{Regular set $\Anr$, regular intervals and good blocks.}

Fix $\eta<\infty$ large enough and $\epsilon>0$ small enough.
The regular subset $\Anr\subset\An$ is defined as (under the convention that
$x_0\equiv 0$):
\be{eq:Anr} 
\Anr = \bigl\{ \ux\in\An \,:\, x_n \leq \eta
\quad\text{and}\quad
\min_{i\leq n} (x_i -x_{i-1}) \geq \epsilon \bigr\} .
\ee
The notion of regular interval is defined relative to a given continuous
$\An$-valued function $\ux(\cdot)$. Namely, an interval $[\ell,\ell +1]$ is
said to be \emph{regular} if
\be{eq:Dl-reg} 
\ux(\ell), \ux(\ell+1) \in \Anr
\quad\text{and}\quad
\max_{t\in [\ell,\ell+1]} x_n(t) \leq 2\eta .
\ee
Consider now the intervals $D_\ell = [2\ell,2(\ell+1)]$, which we shall call 
\emph{blocks}. A block is a union of two successive unit length intervals, 
\[
D_\ell = [2\ell,2\ell+1] \cup [2\ell+1,2(\ell+1)] \df D_\ell^-\cup D_\ell^+ .
\]
We shall say that $D_\ell$ is \emph{good} if both $D_\ell^+$ and $D_\ell^-$ 
are regular. If the notion of \emph{goodness} is defined with respect to random 
trajectories, namely $\ux^\lambda(\cdot)$, then we shall also use $D_\ell$ for 
the corresponding event.
\begin{lemma} 
\label{lem:good} 
Define $\Anlr = \Anr\cap\Anl$. There exist two constants $c_1,c_2$ such that
\be{eq:good}
c_1 h_\lambda^n
\leq
\bbP_{0,2,+,\lambda}^{\ur,\uz} \bigl(\ux^\lambda(1)=\us \bigm| D_0 \bigr) 
\leq
c_2 h_\lambda^n , 
\ee
uniformly in $\lambda$ small and in $\ur,\us,\uz\in\Anlr$. 
\end{lemma}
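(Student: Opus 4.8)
The plan is to decompose the conditional law via the Markov property and thereby reduce the claim to a two-sided local bound on one-step constrained partition functions. For $\ell\in\{0,1\}$ and $\ur,\us\in(\bbN_\lambda)^n$, let $W^{\ur,\us}_\ell$ denote the partition function $Z_\lambda$ restricted to $n$-tuples of trajectories that run from $\ur$ at time $\ell$ to $\us$ at time $\ell+1$, stay ordered and positive, and satisfy $\max_{t\in[\ell,\ell+1]}x_n(t)\le 2\eta$; by time-homogeneity this does not depend on $\ell$, so write $W^{\ur,\us}$. Given $\ux^\lambda(0)=\ur$ and $\ux^\lambda(2)=\uz$ with $\ur,\uz\in\Anlr$, the event $D_0$ reduces to $\{\ux^\lambda(1)\in\Anlr\}\cap\{\max_{[0,1]}x_n\le 2\eta\}\cap\{\max_{[1,2]}x_n\le 2\eta\}$ (the constraint $\ux^\lambda(1)\in\Anr$ becomes $\ux^\lambda(1)\in\Anlr$ since the polymer paths already take ordered positive lattice values at time $1$), and splitting the path at time $1$ — the middle time being counted exactly once in~\eqref{eq:p-lambda} — the normalization $Z_{0,2,+,\lambda}^{\ur,\uz}$ cancels between numerator and denominator, so that
\[
\bbP_{0,2,+,\lambda}^{\ur,\uz}\bigl(\ux^\lambda(1)=\us\bigm| D_0\bigr)
= \frac{W^{\ur,\us}\,W^{\us,\uz}}{\sum_{\us'\in\Anlr}W^{\ur,\us'}\,W^{\us',\uz}} .
\]
It thus suffices to prove that $W^{\ur,\us}\asymp h_\lambda^n$ (bounded above and below by $\lambda$-independent multiples of $h_\lambda^n$) uniformly over $\ur,\us\in\Anlr$, together with $\#\Anlr\asymp h_\lambda^{-n}$; the count is immediate because $\Anr$ is a bounded subset of $\bbR^n$ with nonempty interior (it contains an open box as soon as $\eta>n\epsilon$, which we may assume), so $\#(\Anr\cap(h_\lambda\bbZ)^n)=(1+\mathrm o(1))h_\lambda^{-n}\,\mathrm{vol}(\Anr)$.

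For the upper bound on $W^{\ur,\us}$, discard the tilt ($V_\lambda\ge0$) and the barrier $x_n\le 2\eta$; since ordered trajectories are in particular non-colliding and the tilted weights factor over the $n$ trajectories, $W^{\ur,\us}\le\prod_{i=1}^n\sup_{r,s}\hat Z_{1,+,\lambda}^{r,s}\le(c_1(1)h_\lambda)^n$ by Lemma~\ref{lem:CLT}. For the lower bound, observe first that on the event that every trajectory stays below $2\eta H_\lambda$ in the original coordinates, each of the $nH_\lambda^2$ terms $V_\lambda(X_i^\ell)$ is at most $V_\lambda(2\eta H_\lambda)$, and $nH_\lambda^2V_\lambda(2\eta H_\lambda)\to nq(2\eta)<\infty$ by~\eqref{eq:HL-2}; hence on this event the tilt weight is at least $\mathrm{e}^{-C}$ with $C=C(n,\eta)$, for all small $\lambda$, giving
\[
W^{\ur,\us}\ \ge\ \mathrm{e}^{-C}\,\hat{\mathbf P}_\lambda^{\ur}\bigl(\ux^\lambda(1)=\us;\ \text{ordered, positive, }x_n\le 2\eta\text{ on }[0,1]\bigr).
\]

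To bound this last probability from below, force a corridor around the affine interpolations: let $\ell_i(t)$ interpolate linearly between $r_i$ and $s_i$ on $[0,1]$. Since $\ur,\us\in\Anlr$ have coordinates in $[\epsilon,\eta]$ with consecutive gaps $\ge\epsilon$ (with $x_0\equiv0$), one has $\ell_1\ge\epsilon$, $\ell_{i+1}-\ell_i\ge\epsilon$ and $\ell_n\le\eta$ throughout $[0,1]$, so $\{\max_{t\in[0,1]}|x_i^\lambda(t)-\ell_i(t)|\le\epsilon/4\text{ for all }i\}$ is contained in the ``ordered, positive, $x_n\le 2\eta$'' event. By independence of the $n$ walks this probability factors over $i$, and the matter reduces to the one-particle estimate $\hat{\mathbf P}_\lambda^{r}\bigl(x^\lambda(1)=s,\ \max_{t\in[0,1]}|x^\lambda(t)-\ell(t)|\le\epsilon/4\bigr)\ge c\,h_\lambda$, uniformly over $r,s\in[\epsilon,\eta]\cap\bbN_\lambda$. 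Feeding $W^{\ur,\us}\asymp h_\lambda^n$ and $\#\Anlr\asymp h_\lambda^{-n}$ into the displayed ratio yields numerator $\asymp h_\lambda^{2n}$, denominator $\asymp h_\lambda^{n}$, hence the ratio $\asymp h_\lambda^n$, which is the assertion.

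\textbf{The main obstacle} is the last one-particle bound. It requires a local limit theorem uniform over endpoints in a compact set (supplying the prefactor $\hat{\mathbf P}_\lambda^{r}(x^\lambda(1)=s)\asymp h_\lambda$) together with the fact that the associated random-walk bridge stays inside the open $\epsilon/4$-tube around $\ell$ with probability bounded away from $0$ uniformly in these endpoints — which follows from the invariance principle, since the limiting Brownian bridge stays in the open tube with positive probability and this probability depends continuously on the (compactly varying) endpoints. The routine periodicity issue for $\sfp$ is absorbed by restricting $\us,\us'$ to the reachable sublattice, which still has density $\asymp h_\lambda^{-n}$ inside $\Anr$; the remaining ingredients — the Markov decomposition, the area-tilt bound, and the corridor construction — are soft.
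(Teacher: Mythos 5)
Your argument is correct and its skeleton coincides with the paper's: an upper bound of order $h_\lambda^n$ on the one-step constrained kernels via a concentration/local-CLT estimate, a matching lower bound of order $h_\lambda^n$, and the count $\#\Anlr\asymp h_\lambda^{-n}$ to balance numerator and denominator after summing over the midpoint. The two differences are organisational and methodological. First, you package the computation as a clean Gibbs factorisation at time $1$ (the ratio $W^{\ur,\us}W^{\us,\uz}/\sum_{\us'}W^{\ur,\us'}W^{\us',\uz}$), whereas the paper manipulates the joint probabilities $\hat{\mathbf P}^{\ur}_{+,\lambda}(\ux^\lambda(1)=\uz, D_0,\ux^\lambda(2)=\us)$ and $\hat{\mathbf P}^{\ur}_{+,\lambda}(D_0,\ux^\lambda(2)=\us)$ directly; this is cosmetic. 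Second, and more substantively, for the key lower bound the paper invokes the input \eqref{eq:Inp1}, which it proves in the Appendix by strong approximation results for walks in Weyl chambers (Denisov--Wachtel), while you give a self-contained corridor argument: since points of $\Anr$ have consecutive gaps at least $\epsilon$ and top coordinate at most $\eta$, the $n$ tubes of half-width $\epsilon/4$ around the linear interpolations are disjoint, positive and below $2\eta$, so the bound factorises into $n$ one-particle tube estimates. This is legitimate and is in fact the same device the paper uses for~\eqref{eq:lb-tube2} in the proof of Lemma~\ref{lem:LB_PF}; the one-particle estimate you flag as the remaining obstacle (local CLT uniform over compact endpoints, plus the bridge staying in an open tube with probability bounded away from zero) is standard and can be settled exactly as in that lemma by a coarse splitting into time-blocks of length of order $\epsilon^2$. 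Your approach buys independence from the DW15 machinery for this particular lemma (the full strength of \eqref{eq:Inp1}, uniform in $t\in[a,b]$, is still needed elsewhere); the paper's approach buys uniformity of a single input reused across several proofs. Two minor points you handled correctly but should keep explicit: the two-sided control of the area tilt on $D_0$ (via~\eqref{eq:HL-2} and $V_\lambda\ge 0$) is what licenses discarding it in both bounds, and the midpoint ordering constraint at time $1$ is absorbed into $\us'\in\Anlr$.
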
 
We prove Lemma~\ref{lem:good} in Subsection~\ref{sub:good}. 

\subsection{Good blocks for a couple of trajectories.}

Consider now a couple of independent trajectories 
$\bigl(\ux^\lambda(\cdot),\uy^\lambda(\cdot)\bigr)$ (rescaled as 
in~\eqref{eq:xN}), distributed according to
\[ 
\bbP_{a,+,\lambda}^{\ur,\us}\otimes\bbP_{b,+,\lambda}^{\uu,\uw} .
\] 
Set $3M=\min\{a,b\}$. For $D_\ell \subset [-2M,2M]$, let us define
\be{eq:BlandM}
\frD_\ell =
\{ \text{$D_\ell$ is good for both $\ux^\lambda$ and $\uy^\lambda$} \}
\quad\text{and}\quad
\calM_0 = \;\;\sum_{\mathclap{-M \leq \ell \leq M-1}}\;\; \1_{\frD_\ell} .
\ee
\begin{lemma} 
\label{lem:calM} 
There exist $\nu>0$ and $\kappa>0$ such that
\be{eq:calM}
\bbP_{a,+,\lambda}^{\ur,\us}\otimes\bbP_{b,+,\lambda}^{\uu,\uw}
( \calM_0 \leq \nu M )
\leq \mathrm{e}^{-\kappa M} ,
\ee
uniformly in $\lambda$ small, $M$ large and $r_n,s_n,u_n,v_n\leq C$.
\end{lemma}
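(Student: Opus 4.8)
\textbf{Proof plan for Lemma~\ref{lem:calM}.}
The plan is to realize $\calM_0$ as a lower bound for a sum over a renewal-like structure and then apply a standard large-deviation estimate for sums of weakly dependent Bernoulli-type indicators. First I would observe that, conditionally on the positions $\ux^\lambda(\ell),\uy^\lambda(\ell)$ at the even integer times $\ell\in 2\bbZ$, the events $\frD_\ell$ become conditionally independent across $\ell$, since each $\frD_\ell$ is measurable with respect to the increments of both walks on the block $D_\ell=[2\ell,2(\ell+1)]$, and conditioning on the endpoints of all blocks makes these pieces independent (Markov property for the two independent bridge ensembles $\bbP_{a,+,\lambda}^{\ur,\us}$ and $\bbP_{b,+,\lambda}^{\uu,\uw}$). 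So the real task reduces to a uniform lower bound on the conditional probability
\[
\bbP_{a,+,\lambda}^{\ur,\us}\otimes\bbP_{b,+,\lambda}^{\uu,\uw}\bigl(\frD_\ell \bigm| \ux^\lambda(2\ell),\uy^\lambda(2\ell),\ux^\lambda(2\ell+2),\uy^\lambda(2\ell+2)\bigr)\geq p_0>0,
\]
for some $p_0$ independent of $\lambda$, of $M$, of the block index $\ell$ within $[-M,M-1]$, and of the boundary data $\ur,\us,\uu,\uw$ in the regime $r_n,s_n,u_n,w_n\leq C$.

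To get that uniform lower bound I would proceed in two stages. Stage one: show that with probability bounded below (uniformly) the endpoints $\ux^\lambda(2\ell),\ux^\lambda(2\ell+2)$ (and likewise for $\uy^\lambda$) land in the regular set $\Anlr$. This is where one uses that the area tilt cannot push the bottom walk too high — the bound $\Hla^2 V_\lambda(r\Hla)\geq q_0(r)$ from~\eqref{eq:HL-3} together with~\eqref{eq:Extra-A} forces the top coordinate to stay $\leq\eta$ with high probability for $\eta$ large, exactly as in Lemma~\ref{lem:empt-sum}; meanwhile the minimal-gap condition $\min_i(x_i-x_{i-1})\geq\epsilon$ holds with probability bounded below because the $n$-dimensional diffusion/random walk in the Weyl chamber spreads the coordinates apart on an $O(1)$ time scale (a consequence of the local CLT in Weyl chambers underlying Proposition~\ref{prop:conv-n} and the Karlin--McGregor representation~\eqref{eq:KM-lambda}). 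Stage two: given that all four relevant endpoints lie in $\Anlr$, invoke Lemma~\ref{lem:good} (applied on each of the two unit intervals $D_\ell^{\pm}$, for each of the two walks) to conclude that the conditional probability that $D_\ell^{\pm}$ is regular — including the excursion bound $\max_{t\in D_\ell^{\pm}}x_n(t)\leq 2\eta$, which is an $O(1)$ modulus-of-continuity event for the underlying walk and hence has probability bounded below — is at least some $c>0$. Combining, $p_0$ exists.

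With conditional independence and the uniform lower bound $p_0$ in hand, I would finish by a Chernoff/Azuma argument: writing $\calM_0=\sum_{-M\leq\ell\leq M-1}\1_{\frD_\ell}$ and letting $\calF$ be the $\sigma$-algebra generated by the even-time endpoints, each $\1_{\frD_\ell}$ conditionally on $\calF$ stochastically dominates a Bernoulli$(p_0)$ variable and these are conditionally independent, so for $\nu<p_0$,
\[
\bbP\bigl(\calM_0\leq\nu\cdot 2M \bigm|\calF\bigr)\leq \mathrm{e}^{-2M\,I(\nu)},\qquad I(\nu)=\nu\log\tfrac{\nu}{p_0}+(1-\nu)\log\tfrac{1-\nu}{1-p_0}>0,
\]
by the standard lower-tail Chernoff bound for a Binomial$(2M,p_0)$-dominated sum; taking expectations over $\calF$ removes the conditioning, and choosing $\nu>0$ small enough (e.g. $\nu=p_0/4$, adjusting the counting so that the $2M$ versus $M$ discrepancy in~\eqref{eq:calM} is absorbed) gives $\kappa>0$ with $\bbP(\calM_0\leq\nu M)\leq\mathrm{e}^{-\kappa M}$.

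I expect the main obstacle to be Stage one of the middle paragraph — proving that the endpoints at even times lie in $\Anlr$ with probability bounded below \emph{uniformly in the bridge endpoints $\ur,\us,\uu,\uw$ and uniformly in $\ell$ within the bulk $[-M,M-1]$}. The top-coordinate control is essentially Lemma~\ref{lem:empt-sum} run at the level of the $n$-walk ensemble, but the uniform-in-$\ell$ part requires knowing that conditioning a long tilted bridge on far-away endpoints does not distort the local law near time $2\ell$; this is precisely the kind of estimate that the strong-approximation / Weyl-chamber invariance-principle inputs~\eqref{eq:Inp1}--\eqref{eq:Inp3} (relegated to the Appendix) are designed to supply, and everything else is soft. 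The minimal-gap lower bound is comparatively routine once one has a local CLT in the Weyl chamber with a density bounded below on compacts of $\Anr$.
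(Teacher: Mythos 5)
Your overall architecture (a block decomposition, a uniform lower bound on the conditional probability of a good block, then Chernoff) matches the spirit of the paper's proof, and your final large-deviation step is fine. But there is a genuine gap in the middle, and it sits exactly where you locate "the main obstacle" -- except that the tools you propose to close it with are not the ones that can do the job. The conditional probability of $\frD_\ell$ given the even-time positions $\ux^\lambda(2\ell),\ux^\lambda(2\ell+2),\dots$ is \emph{not} bounded below by any $p_0>0$ uniformly: the event $\frD_\ell$ requires $\ux^\lambda(2\ell)\in\Anr$, so this conditional probability is identically zero whenever the conditioning places an endpoint outside $\Anr$ (top coordinate above $\eta$, or two coordinates closer than $\epsilon$). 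Conversely, if you drop the conditioning, then knowing that each individual time $2\ell$ satisfies $\ux^\lambda(2\ell)\in\Anlr$ with probability bounded below does not feed into a Chernoff bound, because you have lost independence; what you actually need is to rule out, with exponentially small probability, \emph{long stretches} of times at which the top coordinate stays above $\eta$. The Appendix inputs~\eqref{eq:Inp1}--\eqref{eq:Inp3} cannot supply this: they are $O(1)$-time estimates for the \emph{untilted} walk and say nothing about the long-time behaviour of the tilted bridge.

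The missing idea is an energy-versus-partition-function comparison that exploits the area tilt quantitatively. The paper first weakens the target to "pre-good" $5$-blocks $D^{(5)}_\ell$, requiring only that the top path dip below $\eta$ somewhere in each of the two outer blocks $D_{5\ell\pm2}$; failure of pre-goodness on a set of blocks of total length $\sim M$ forces the top path to stay above $\eta$ there, which by~\eqref{eq:HL-3} costs a factor $\mathrm{e}^{-c\,q_0(\eta)M}$ in the tilted weight. This is then divided by a \emph{lower} bound $Z^{\uw,\uz}_{T,+,\lambda}\geq c_2\mathrm{e}^{-c_1T}\hat{\mathbf P}^{\uw}_{2T,+,\lambda}(\ux^\lambda(2T)=\uz)$ on the partition function (Lemma~\ref{lem:LB_PF}, proved via tube estimates and~\eqref{eq:Inp1}--\eqref{eq:Inp3}), in which the crucial point is that $c_1$ does not depend on $\eta$; choosing $\eta$ with $q_0(\eta)\gg c_1$ makes a low density of pre-good $5$-blocks exponentially unlikely. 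The same mechanism disposes of the case where the second bridge has length $b\gg M$. Only \emph{after} this global step does one use a local conditional bound: conditioning at the random hitting times $t_\pm$ of the level $\eta$ inside a pre-good $5$-block (not at deterministic even times), the Gibbs property plus~\eqref{eq:Inp1}--\eqref{eq:Inp3} and Lemma~\ref{lem:good} give that the middle block is good with probability $\geq\rho_1^2$, independently over pre-good $5$-blocks, and then your Chernoff argument applies. Without the intermediate pre-good notion and the $q_0(\eta)$-versus-$c_1$ comparison, your Stage one cannot be made uniform in $\ell$ and in the bridge endpoints, so the proposal as written does not close.
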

The proof of Lemma~\ref{lem:calM} is relegated to Subsection~\ref{sub:calM}. 

\subsection{A coupling argument.}

Fix $\lambda$ small, a negative integer $a<-2T$ and $\ur\in\Anlr$.
For $K\in\bbN$ and $\uu,\uv\in\Anlr$, define
\be{eq:Qmeasure} 
\bbQ_K^{\uu,\uv} (\cdot)
=
\bbP_{a,2K+1,+,\lambda}^{\ur,\uu}\otimes\bbP_{a,2K+1,+,\lambda}^{\ur,\uv}
\bigl(\,\cdot \bigm| \frD_K^- \bigr) ,
\ee
where, similarly to~\eqref{eq:BlandM}, we define
\[
\frD_\ell^\pm
=
\{ \text{$D_\ell^\pm$ is regular for both $\ux^\lambda$ and $\uy^\lambda$} \} . 
\]
In this way, $\frD_\ell = \frD_\ell^- \cap \frD_\ell^+$.
As before, the number $\calM_0$ of good blocks $D_\ell$ for $\ell\in 
\{1,\ldots,K-1\}$ is defined by
\be{eqMnot}
\calM_0 = \sum_{\ell=1}^{K-1} \1_{\frD_\ell} .
\ee
Let $\calF_T = \calF_T^\lambda$ be the $\sigma$-algebra generated by rescaled
trajectories~\eqref{eq:xN} on $[-2T,0]$. For the $\sigma$-algebra generated by 
a couple of such trajectories 
$\bigl(\ux^\lambda(\cdot),\uy^\lambda(\cdot)\bigr)$, we use 
$\calF_T\times\calF_T$.
Given $A\in\calF_T$, $A\times\Omega$ stands for the event that 
$\ux^\lambda(\cdot)\in A$ without further restrictions on
$\uy^\lambda(\cdot)$; $\Omega\times A$ is defined similarly. Define
\be{eq:psi-m}
\psi (m) = \sup_{K>m} \sup_{\uu,\uv\in\Anlr} \sup_{A\in\calF_T}
\bigl\{
\bbQ_K^{\uu,\uv}\bigl(A\times\Omega ; \calM_0 \geq m\bigr) -
\bbQ_K^{\uu,\uv}\bigl(\Omega\times A; \calM_0 \geq m\bigr) \bigr\} .
\ee
\begin{figure}[t]
\begin{center}
\resizebox{13.5cm}{!}{\input{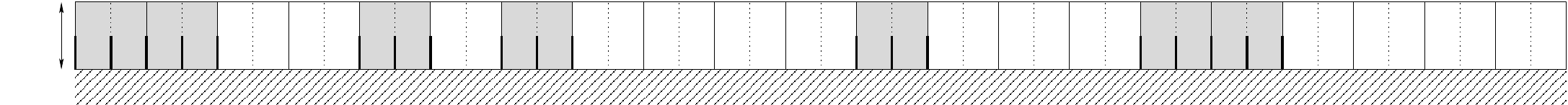_t}}
\caption{\small Decomposition of the line into blocks. The shaded blocks 
represent jointly good blocks. Note that, in this case, the couple 
$(\ux^\lambda(\cdot),\uy^\lambda(\cdot))$ must be such that all trajectories 
stay inside the shaded area above jointly good blocks and cross the bold line 
segments in such a way that their $n$ paths $x^\lambda_1,\ldots,x^\lambda_n$, 
resp.~$y^\lambda_1,\ldots,y^\lambda_n$, are $\epsilon$-separated. Consequently, 
$\ux^\lambda(\cdot)$ and $\uy^\lambda(\cdot))$ can be coupled with positive 
probability, independently over each such jointly good block.}
\label{fig:goodBlocks}
\end{center}
\end{figure}%
\begin{lemma} 
\label{thm:delta} 
There exists $\delta>0$, which does not depend on $\lambda$, $a$ and $\ur$, 
such that
\be{eq:delta} 
\psi(m) \leq (1-\delta)^m .
\ee
\end{lemma}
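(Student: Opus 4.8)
The plan is to prove the geometric bound on $\psi(m)$ by a block-by-block coupling construction, exploiting the fact that on each jointly good block the two trajectories can be merged with probability bounded below uniformly in the data. First I would observe that, conditionally on $\frD_K^-$ and on any fixed configuration of the two trajectories outside the good blocks $D_1,\dots,D_{K-1}$, the restrictions of $(\ux^\lambda,\uy^\lambda)$ to the successive good blocks are independent; this is because the polymer weight factorizes over disjoint time intervals once the endpoints on the boundaries of the blocks are fixed. On a jointly good block $D_\ell$, both trajectories enter and leave $\Anlr$ (their values at $2\ell$ and $2\ell+2$ lie in $\Anlr$, and neither excursion exceeds height $2\eta$), so Lemma~\ref{lem:good} applies: the conditional law of $\ux^\lambda(2\ell+1)$ given $D_\ell$ has density between $c_1 h_\lambda^n$ and $c_2 h_\lambda^n$ on $\Anlr$, and the same holds for $\uy^\lambda$. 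Hence the two conditional midpoint laws have total-variation overlap at least $c_1/c_2 \eqqcolon \delta > 0$, and one can couple them so that $\ux^\lambda(2\ell+1) = \uy^\lambda(2\ell+1)$ with probability at least $\delta$ on $D_\ell$; once the two trajectories agree at a single time $2\ell+1 \geq 1 > -2T$, they can be run together from then on, so in particular they agree on all of $[-2T,0]$ and the events $A\times\Omega$ and $\Omega\times A$ coincide.

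The key steps, in order, are: (1) set up the conditional independence of the two-trajectory law over the good blocks, treating $\frD_K^-$ and the external configuration as conditioning; (2) on each good block, invoke Lemma~\ref{lem:good} to get the two-sided density bound $c_1 h_\lambda^n \leq \cdot \leq c_2 h_\lambda^n$ for the midpoint marginal of each of $\ux^\lambda$, $\uy^\lambda$ conditioned on $D_\ell$ being good — here I would have to be a little careful that the relevant event inside $\bbQ_K^{\uu,\uv}$ restricted to $D_\ell$ really is of the form handled by Lemma~\ref{lem:good} (a single good block with endpoints in $\Anlr$), which is exactly why the regular set $\Anr$ was cooked up; (3) build the synchronous coupling block by block: process $D_1, D_2, \dots$ in order, and at the first good block attempt to merge the midpoints with success probability $\geq \delta$, continuing to the next good block on failure; (4) note that after a successful merge the trajectories are identical on $[-2T,0]$, so they contribute $0$ to the difference in~\eqref{eq:psi-m}; (5) conclude that the difference $\bbQ_K^{\uu,\uv}(A\times\Omega;\calM_0\geq m) - \bbQ_K^{\uu,\uv}(\Omega\times A;\calM_0\geq m)$ is bounded by the probability that all $m$ (or more) merge attempts fail, which is at most $(1-\delta)^m$, uniformly in $K > m$, in $\uu,\uv\in\Anlr$, in $A\in\calF_T$, and in $\lambda$, $a$, $\ur$.

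A subtle point worth spelling out is why the merge attempts really are (conditionally) independent Bernoulli$(\geq\delta)$ trials: the event $\{\calM_0 \geq m\}$ only says there are at least $m$ good blocks, but which blocks are good is itself random, so I would condition first on the full collection $\mathcal{G} = \{\ell : \frD_\ell \text{ occurs}\}$ of good indices together with the trajectory data outside $\bigcup_{\ell\in\mathcal{G}} D_\ell$; given this, the midpoint increments on the good blocks are independent across $\ell\in\mathcal{G}$, and on each the overlap-$\delta$ coupling can be performed. Averaging over $\mathcal{G}$ and over the external data then gives the bound on the conditioned event $\{\calM_0\geq m\}$, since conditioning on $|\mathcal{G}|\geq m$ only makes more independent trials available. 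The main obstacle I anticipate is purely bookkeeping: making the conditional independence over good blocks rigorous in the presence of the conditioning events $\frD_K^-$ and $\{\calM_0 \geq m\}$, and verifying that Lemma~\ref{lem:good}'s hypotheses are met block-by-block inside $\bbQ_K^{\uu,\uv}$ (in particular that the endpoint values at $2\ell$ and $2\ell+2$ indeed lie in $\Anlr$ on a good block, which is immediate from the definition of regularity of $D_\ell^\pm$). Once that is in place, the geometric estimate~\eqref{eq:delta} is a one-line consequence of the independence and the uniform lower bound $\delta$ on the per-block merge probability.
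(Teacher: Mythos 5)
Your key quantitative input is the right one (Lemma~\ref{lem:good}, giving an overlap $\delta=c_1/c_2$ of the two midpoint laws at a jointly good block), and the target structure --- one factor of $(1-\delta)$ per good block --- matches the paper's. But the implementation has a genuine structural flaw, not just a bookkeeping issue. First, the event $A$ lives in $\calF_T$, i.e.\ it depends on the trajectories on $[-2T,0]$, while the blocks $D_1,\dots,D_{K-1}$ occupy $[2,2K]$. Once you condition on ``the trajectory data outside $\bigcup_{\ell\in\mathcal G}D_\ell$'', you have conditioned on the paths over $[-2T,0]$, so $\1_A(\ux^\lambda)$ and $\1_A(\uy^\lambda)$ are already deterministic and the subsequent coupling of the good-block interiors cannot affect them: merging the midpoints at times $2\ell+1\geq 3$ does not retroactively change the (already frozen) paths on $[-2T,0]$. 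The randomness you actually need to couple is the \emph{past} of the last relevant good block --- precisely the data your conditioning removes. Second, under $\bbQ_K^{\uu,\uv}$ the two trajectories are \emph{independent} with distinct endpoint conditions $\uu\neq\uv$; their joint law is prescribed, and $\calM_0$ (the number of \emph{jointly} good blocks) is a functional of that joint law. You are therefore not free to replace the product coupling of the midpoints by a maximal coupling: doing so changes the law of $\calM_0$ and hence the quantity $\bbQ_K^{\uu,\uv}(\,\cdot\,;\calM_0\geq m)$ you are estimating. (Note also that under the product law the two midpoints actually coincide only with probability $O(h_\lambda^n)$.)

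The way the overlap $c_1/c_2$ is correctly exploited is by comparing the measure of the ordered midpoint pair $\{\ux,\uy\}_\ell$ with that of the swapped pair $\{\uy,\ux\}_\ell$, i.e.\ by comparing $\bbQ_K^{\uu,\uv}$ with its own symmetrization rather than by coupling $\ux^\lambda$ to $\uy^\lambda$. The paper does this by decomposing on the \emph{last} good block $\tau=\max\{\ell<K:\frD_\ell\}$, applying the Markov property at time $2\ell+1$ so that the remaining quantity is again of the form $\psi(m-1)$ for the shorter time horizon (this is where the conditioning on $\frD_\ell^-$ in the definition of $\bbQ_\ell^{\ux,\uy}$ is needed, since $\{\tau=\ell\}$ straddles the time $2\ell+1$), pairing the contributions of $(\ux,\uy)$ and $(\uy,\ux)$, and using Lemma~\ref{lem:good} to bound the resulting ratio, which yields the recursion $\psi(m)\leq(1-\delta)\psi(m-1)$. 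This induction also resolves your ``independent Bernoulli trials'' worry: the attempts are not independent in any adapted sense because the locations of the good blocks and the event $\{\calM_0\geq m\}$ are not measurable with respect to a one-sided revelation of the paths, and the last-good-block recursion is exactly the device that replaces the naive product bound. To salvage your approach you would have to recast it as a coupling of the pair $(\ux^\lambda,\uy^\lambda)$ with an identically distributed pair whose roles are swapped on the past of the last good block --- which is the paper's argument in different clothing.
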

\begin{proof}
The idea of the proof is sketched in Fig.~\ref{fig:goodBlocks}.

Let $K$, $\uu$ and $\uv$ be as above. Define
\[
\tau = \max\{ \ell<K \,:\, \frD_\ell \text{ occurs} \} .
\]
For any $\calA\in\calF_T\times\calF_T$, consider the decomposition
\be{eq:Decomp1} 
\bbQ_K^{\uu,\uv} \bigl(\calA; \calM_0 \geq m\bigr)
=
\sum_{\ell=1}^{K-1} \bbQ_K^{\uu,\uv} \bigl(\calA; \calM_0 \geq m ; \tau = 
\ell\bigr) .
\ee
In its turn, let us decompose each summand in~\eqref{eq:Decomp1} as
\be{eq:Decomp2} 
\bbQ_K^{\uu,\uv} \bigl(\calA ; \calM_0 \geq m ; \tau = \ell\bigr)
=
\sum_{\ux,\uy\in\Anlr}
\bbQ_K^{\uu,\uv} \bigl(\calA ; \calM_0 \geq m ; \tau = \ell ; \{\ux,\uy\}_\ell 
\big) , 
\ee
where we used the shorthand notation
\[
\{\ux,\uy\}_\ell
=
\bigl\{ \ux^\lambda(2\ell+1) = \ux \,;\, \uy^\lambda(2\ell+1) = \uy \bigr\} .
\]
Now
%, on the one hand, the 
the 
Markov property implies that
\be{eq:Cond-xy}
\bbQ_K^{\uu,\uv} \bigl(\calA ; \calM_0 \geq m \bigm| \tau = \ell ; 
\{\ux,\uy\}_\ell \bigr)
= 
\bbQ_\ell^{\ux,\uy} \bigl(\calA; \calM_0 \geq m-1 \bigr) .
\ee 
%\blue
{
Therefore, 
\begin{multline}
\label{eq:decomp-A}
\bbQ_K^{\uu,\uv} \bigl(\calA; \calM_0 \geq m\bigr)
=
\sum_{\ell=1}^{K-1} \sum_{\ux , \uy\in \Anlr} 
\bbQ_\ell^{\ux,\uy} \bigl(\calA; \calM_0 \geq m-1 \bigr)\\
\times 
\bbQ_K^{\uu,\uv} \bigl( \{\ux,\uy\}_\ell ~\big|~ \tau = \ell\bigr) 
\bbQ_K^{\uu,\uv} \bigl(\tau = \ell\bigr) . 
\end{multline}
This means that, for any $A\in\calF_T$, and for any $m,K,\uu,\uv$ in question, 
the following holds:
\begin{multline} 
\label{eq:sum-l-decomp}
\bbQ_K^{\uu,\uv} \bigl(A\times \Omega ; \calM_0 \geq m\bigr) - 
\bbQ_K^{\uu,\uv} \bigl( \Omega \times A ; \calM_0 \geq m\bigr)\\ 
\quad = 
\sum_{\ell , \ux , \uy} \bigl\{ 
\bbQ_\ell^{\ux,\uy} \bigl( A\times\Omega ; \calM_0 \geq m-1 \bigr) - 
\bbQ_\ell^{\ux,\uy} \bigl(\Omega\times A; \calM_0 \geq m-1 \bigr)
\bigr\} 
\\ 
\times 
\bbQ_K^{\uu,\uv} \bigl( \{\ux,\uy\}_\ell ~\big|~ \tau = \ell\bigr) 
\bbQ_K^{\uu,\uv} \bigl(\tau = \ell\bigr) .
\end{multline}
Since, evidently,
\be{eq:sym}
\bbQ_\ell^{\ux,\uy} \bigl( A\times\Omega  ; \calM_0 \geq m-1 \bigr)
= 
\bbQ_\ell^{\uy,\ux} \bigl( \Omega\times A ; \calM_0 \geq m-1 \bigr) ,
\ee
all the terms with $\ux = \uy$ in \eqref{eq:sum-l-decomp} vanish. On the other 
hand, each unordered pair $\ux\neq \uy$ is encountered exactly twice. Hence, 
again in view of~\eqref{eq:sym}, the contribution of each \emph{unordered 
pair} $\ux\neq \uy$ to the right-hand side of~\eqref{eq:sum-l-decomp} is equal 
to 
\begin{multline}
\label{eq:dif-Ql}
\bigl\{
\bbQ_\ell^{\ux,\uy} \bigl( A\times\Omega ; \calM_0 \geq m-1 \bigr) - 
\bbQ_\ell^{\ux,\uy} \bigl( \Omega\times A  ; \calM_0 \geq m-1 \bigr)
\bigr\}\\
\times 
\bigl\{
\bbQ_K^{\uu,\uv} \bigl( \{\ux,\uy\}_\ell ~\big|~ \tau = \ell\bigr) - 
\bbQ_K^{\uu,\uv} \bigl( \{\uy,\ux\}_\ell ~\big|~ \tau = \ell\bigr)
\bigr\}. 
\end{multline}
On the other hand, by Lemma~\ref{lem:good},
\begin{equation}\label{eq:q-coef} 
c_1 
\leq 
%q_{ \ell , K}^{\uu , \uv} (\ux , \uy ) \df 
\frac{
\bbQ_K^{\uu,\uv} \bigl( \{\ux,\uy\}_\ell \bigm| \tau = \ell \bigr) 
}
{
\bbQ_K^{\uu,\uv} \bigl( \{\uy,\ux\}_\ell \bigm| \tau = \ell \bigr) 
}
\leq
c_2,
\end{equation}
uniformly in all the situations in question. Set $\delta = \frac{c_1}{c_2}$. 
Then, \eqref{eq:q-coef} implies that the expression in~\eqref{eq:dif-Ql}
is bounded above by 
\begin{multline}\label{eq:ux-bound} 
\psi(m-1) (1-\delta) \max \bigl\{
\bbQ_K^{\uu,\uv} \bigl( \{\ux,\uy\}_\ell \bigm| \tau = \ell \bigr) , 
\bbQ_K^{\uu,\uv} \bigl( \{\uy,\ux\}_\ell \bigm| \tau = \ell \bigr)
\bigr\}\\ 
\leq 
\psi(m-1) (1-\delta) \bigl\{
\bbQ_K^{\uu,\uv} \bigl( \{\ux,\uy\}_\ell \bigm| \tau = \ell\bigr) + 
\bbQ_K^{\uu,\uv} \bigl( \{\uy,\ux\}_\ell \bigm| \tau = \ell\bigr)
\bigr\} .
\end{multline}
Since $\bbQ_K^{\uu,\uv}$ is a probability measure, 
substituting~\eqref{eq:ux-bound} into \eqref{eq:sum-l-decomp} yields the 
conclusion \eqref{eq:delta} of the lemma. 
}
\end{proof}

\subsection{Conclusion of the proof.}

We are in a position to conclude the proof of Theorem~\ref{prop:mix}.
Let $a,b\geq (T+K)$ and $\ur,\us,\uw,\uz\in\Anl$ with $r_n,s_n,w_n,z_n\leq C$.
Let $A$ be an event generated by the rescaled trajectories of~\eqref{eq:xN} on 
$[-T,T]$. Then, 
\[
\bbP_{a,+,\lambda}^{\ur,\us}(A) - 
\bbP_{b,+,\lambda}^{\uw,\uz}(A) = 
\bbP_{a,+,\lambda}^{\ur,\us}\otimes\bbP_{b,+,\lambda}^{\uw,\uz}
( \1_{A\times\Omega} - \1_{\Omega\times A} ) .
\]
Consider a pair of trajectories 
$\bigl(x^\lambda(\cdot),y^\lambda(\cdot)\bigr)$, sampled from
$\bbP_{a,+,\lambda}^{\ur,\us}\otimes\bbP_{b,+,\lambda}^{\uw,\uz}$.
Let $\calM_{\pm}$ be the number of jointly good blocks $D_\ell$ with
$T\leq 2\ell \leq T+K-2$, respectively $-T-K \leq 2\ell \leq -T-2$.
 
By Lemma~\ref{lem:calM}, there exist $\nu^\prime,\kappa^\prime>0$ such that,
up to the $2 \mathrm{e}^{-\kappa^\prime K}$ correction, we may restrict our
attention to the event
\[
E_K = \{ \calM_+ \geq \nu^\prime K \} \cap \{ \calM_- \geq \nu^\prime K\} .
\]
On the other hand, by Lemma~\ref{thm:delta}, 
\[
\bigl| \bbP_{a,+,\lambda}^{\ur,\us}\otimes\bbP_{b,+,\lambda}^{\uw,\uz}
\bigl( \1_{E_K} (\1_{A\times\Omega} - \1_{\Omega\times A}) \bigr) \bigr|
\leq
(1-\delta)^{\nu^\prime K} .
\]
Our target exponential mixing bound~\eqref{eq:mix} follows.

\section{Proof of Lemmas~\ref{lem:good} and~\ref{lem:calM}}
\label{sec:lems}

\subsection{Probabilistic estimates}

Our proofs of Lemma~\ref{lem:good} and~\ref{lem:calM} rely on strong 
approximation techniques and on refined information on random walks in Weyl
chambers. There are three inputs, \eqref{eq:Inp1}-\eqref{eq:Inp3}, which are 
stated below, but proved in the Appendix. In the sequel, we fix $\eta$ 
sufficiently large; in particular, $\eta>C$, where $C$ is the constant which 
appears in Theorem~\ref{thm:A}. Furthermore, we fix $\epsilon>0$ sufficiently 
small. 

First of all, we claim that, for any $0<a<b<\infty$, there exists 
$\nu=\nu(a,b)>0$ such that
\be{eq:Inp1}
\tag{\textbf{I.1}}
\hat{\mathbf{P}}^{\ur}_{t,+,\lambda}(
\ux^\lambda(t) = \uz,
\max_{s\in [0,t]} x_n^\lambda(s) \leq 2\eta )
\geq
\nu h_\lambda^{n} ,
\ee
uniformly in $t\in [a,b]$, $\ur,\uz\in\Anlr$ and $\lambda$
small. 

Next, let
\be{eq:def-tau-time} 
\tau = \inf\{t\geq 0 
\,:\, \ux^\lambda\not\in\Anl\} 
\ee
be the first exit time of the path from $\Anl$.
We then claim that, for any $0<a<b<\infty$, there exists $\rho=\rho(a,b)$ such 
that the following two lower bounds hold uniformly in $t\in [a,b]$, 
$\uu\in\Anl$ with $u_n\leq\eta$ and in $\lambda$ sufficiently small:
\begin{gather}
\label{eq:Inp2}
\tag{\textbf{I.2}}
%\inf_{\ur:r_n\leq\eta}
\hat{\mathbf{P}}^{\uu}_{t,\lambda}
\bigl( \max_{s\in[0,t]} x_n^\lambda(s) \leq 2\eta \bigm| \tau > t \bigr) 
%\mathbf{P}_{t,+, \lambda}^{\uu, \uv} ( \max_{s\in[0,t]}
% x_n^\lambda(s)\leq 2\eta   ) 
\geq \rho ,\\ 
\label{eq:Inp3} 
\tag{\textbf{I.3}} 
%\phi_{t,\eta,\lambda}(\uu) \df 
\hat{\mathbf{P}}^{\uu}_{t,+,\lambda}
\bigl( \ux^\lambda(t) \in \bbA^{+,r}_{n,\lambda} \bigm|
\max_{s\in[0,t]} x_n^\lambda(s) \leq 2\eta\,,\,\tau>t \bigr) \geq \rho .
\end{gather}

\subsection{Proof of Lemma~\ref{lem:good}.} 
\label{sub:good}

Since, by definition, the area-tilt of every path in $D_0$ is uniformly 
bounded, it suffices to prove the lemma for random walks without area tilts. 
%Recall our notation for measures related to the
% underlying rescaled walks 
%which was introduced in the end of 
%Subsection~\ref{sub:resc-notation}. 
That is, we need to show that
\begin{equation}
\label{L3.4:1}
c_1 h_\lambda^n
\leq 
\hat{\mathbf{P}}^{\ur,\us}_{2,+,\lambda}
(\ux^\lambda(1) = \uz \,|\, D_0)
\leq
c_2 h_\lambda^n ,
\end{equation}
uniformly in $\ur,\us,\uz\in\Anlr$.  

We start by noting that upper bounds for
$\hat{\mathbf{P}}^{\ur}_{+,\lambda} (D_0 , \ux^\lambda(2) = \us)$ and
$\hat{\mathbf{P}}^{\ur}_{+,\lambda} (\ux^\lambda(1) = \uz , D_0 ,
\ux^\lambda(2) = \us)$ follow from the classical inequalities for concentration 
functions. Indeed, by~\cite[Theorem~6.2]{Esseen1968}, there exists a constant 
$c_3$ such that
\begin{equation}
\label{eq:conc-bound}
\hat{\mathbf{P}}^{\ur}_{+,\lambda} (\ux^\lambda(t) = \uy)
\leq
\hat{\mathbf{P}}^{\ur}_\lambda (\ux^\lambda(t) = \uy)
\leq
\frac{c_3}{t^{n/2}} h_\lambda^n ,
\end{equation}
uniformly in $\ur,\uy$ and $t\geq 0$.

Consequently,
\begin{equation}
\label{L3.4:2}
\hat{\mathbf{P}}^{\ur}_{+,\lambda} (D_0 , \ux^\lambda(2) = \us)
\leq
\hat{\mathbf{P}}^{\ur}_\lambda (\ux^\lambda(2) = \us)
\leq
\frac{c_3}{2^{n/2}} h_\lambda^n
\end{equation}
and
\begin{align}
\hat{\mathbf{P}}^{\ur}_{+,\lambda} (\ux^\lambda(1) = \uz , D_0 ,
\ux^\lambda(2) = \us)
&\leq
\hat{\mathbf{P}}^{\ur}_\lambda (\ux^\lambda(1) = \uz , \ux^\lambda(2) = \us) 
\notag\\
&=
\hat{\mathbf{P}}^{\ur}_\lambda (\ux^\lambda(1) = \uz)
\hat{\mathbf{P}}^{\uz}_\lambda (\ux^\lambda(1) = \us) \notag\\
&\leq
c_3^2 h_\lambda^{2n} ,
\label{L3.4:3}
\end{align}
uniformly in $\ur,\us,\uz\in\Anlr$.

The corresponding matching lower bounds follow from~\eqref{eq:Inp1}. 
%Applying \eqref{L3.4:5} once again we obtain
Indeed, 
\begin{align}
&\hat{\mathbf{P}}^{\ur}_{+,\lambda} (\ux^\lambda(1) = \uz, D_0 ,
\ux^\lambda(2) = \us) \notag\\
&\hspace{2cm}=
\hat{\mathbf{P}}^{\ur}_{+,\lambda} (\ux^\lambda(1) = \uz ,
\max_{t\leq 1} x_n^\lambda(t) \leq 2\eta)
\hat{\mathbf{P}}^{\uz}_{+,\lambda} (\ux^\lambda(1) = \us ,
\max_{t\leq 1} x_n^\lambda(t) \leq 2\eta) \notag\\
&\hspace{2cm}\stackrel{\eqref{eq:Inp1}}{\geq}
c_4 h_\lambda^{2n} ,
\label{L3.4:7}
\end{align}
for any $\ur,\uz,\us\in\Anlr$. Since the cardinality
\be{eq:Areg-size} 
\abs{\Anlr} \geq c_5(\epsilon) \eta^n h_\lambda^{-n},
\ee
we infer, by summing over $\uz$ in~\eqref{L3.4:7}, that
\begin{equation}
\label{L3.4:6}
\hat{\mathbf{P}}^{\ur}_{+,\lambda} (\ux^\lambda(2) = \us , D_0)
\geq
c_6 h_\lambda^n .
\end{equation}
It remains to note that the lower bound in~\eqref{L3.4:1} follows 
from~\eqref{L3.4:2} and~\eqref{L3.4:7}, and that the upper bound 
in~\eqref{L3.4:1} follows from~\eqref{L3.4:3} and~\eqref{L3.4:6}.

\subsection{Proof of Lemma~\ref{lem:calM}.} 
\label{sub:calM}
The proof of Lemma~\ref{lem:calM} proceeds in two steps.

Consider the $5$-blocks 
\[
D^{(5)}_\ell \df
D_{5\ell-2} \cup D_{5\ell-1} \cup D_{5\ell} \cup D_{5\ell+1} \cup D_{5\ell+2} ,
\]
where $\ell\in\{-\lfloor M/5 \rfloor,\ldots,\lfloor M/5 \rfloor\}\subset\bbZ$. 

Let us say that a $5$-block  $D^{(5)}_\ell$ is pre-good (relative to a 
trajectory $\ux^\lambda(\cdot)$) if both
\be{eq:5pre-good} 
\min_{t\in D_{5\ell-2}} x^\lambda_n(t),\,
\min_{t\in D_{5\ell+2}} x^\lambda_n(t) \leq \eta .
\ee
Given a couple of trajectories $\ux^\lambda$ and $\uy^\lambda$, let 
$\tilde\frD_\ell^{(5)}$ denote the event that $D^{(5)}_\ell$ is pre-good for 
both $\ux^\lambda$ and $\uy^\lambda$. 

\smallskip
\noindent
\step{1} 
Note that the definitions are set up in such a way that $D_{5\ell}$ is the 
middle section of $D^{(5)}_\ell$. We claim that there exists $\rho_1 = 
\rho_1(\eta,\epsilon)>0$ such that
\be{eq:pre-to-good1} 
\bbP_{-4,6,+,\lambda}^{\ur,\us}
\bigl( \text{$D_0$ is good} \bigm| \text{$D_0^{(5)}$ is pre-good} \bigr)
\geq \rho_1 , 
\ee
uniformly in $\ur,\us\in\Anl$ and $\lambda$ sufficiently small. 

As a result, for any
$\ell\in \{-\lfloor M/5 \rfloor,\ldots,\lfloor M/5 \rfloor\}\subset\bbZ$,  
\be{eq:Pre-to-good}
\bbP_{a,+,\lambda}^{\ur,\us}\otimes\bbP_{b,+,\lambda}^{\uu,\uw}
\bigl( \frD_{5\ell} \bigm| \tilde\frD_{\ell}^{(5)} \bigr) \geq \rho_1^2 , 
\ee
uniformly in $\ur,\us,\uu,\uv\in\Anl$ and $\lambda$ small enough.
 
By the Markov property, this means that any jointly pre-good $5$-block gives 
rise to a good block in its middle section with probability at least 
$\rho_1^2$, regardless of the behavior of trajectories outside this particular 
pre-good $5$-block.

\smallskip
\noindent 
\step{2} In this second step, we control the density of jointly pre-good 
$5$-blocks $D_{\ell}^{(5)}$ which lie inside $[-2M,2M]$. Define
\[
\calM_0^{(5)} = 
\sum_{j=-\lfloor M/5 \rfloor}^{\lfloor M/5 \rfloor} \1_{\frD_\ell^{(5)}} .
\]
We claim that there exist $\nu^{(5)}>0$ and $\kappa^{(5)}>0$ such that
\be{eq:calMtilde} 
\bbP_{a,+,\lambda}^{\ur,\us}\otimes\bbP_{b,+,\lambda}^{\uu,\uw}
\bigl( \calM_0^{(5)} \leq \nu^{(5)} M \bigr)
\leq 
\mathrm{e}^{-\kappa^{(5)} M} .
\ee
uniformly in $\lambda$ small, $M$ large, $a,b\geq 3M$  and $r_n,s_n,u_n,v_n\leq 
C$.

Evidently, \eqref{eq:Pre-to-good} and~\eqref{eq:calMtilde} imply the target 
bound~\eqref{eq:calM}.

\subsection{Proof of \eqref{eq:pre-to-good1}.} 

\begin{figure}[t]
\begin{center}
\resizebox{13.5cm}{!}{\input{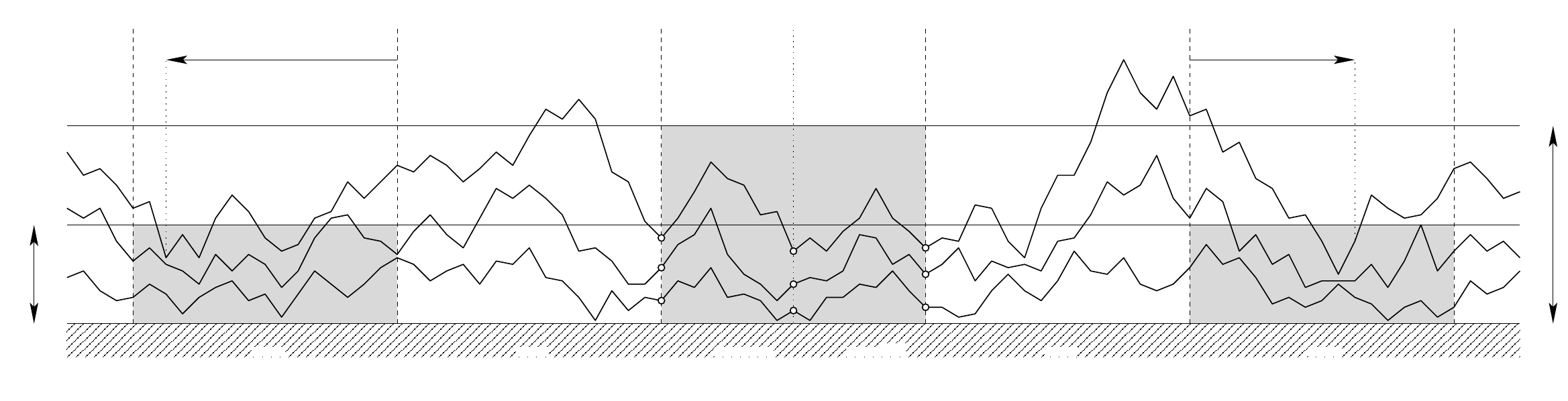_t}}
\caption{\small A picture of the $5$-block $D_0^{(5)}$. In the picture, this 
$5$-block is pre-good, since the top-most path (here $n=3$) visits the shaded 
areas in the blocks $D_{-2}$ and $D_2$; the corresponding random variables 
$t_1$ and $t_2$ are also represented. The event $D_0$ also occurs: the top-most 
path stays inside the shaded area above the block $D_0=D_0^-\cup D_0^+$ and 
the $n=3$ paths stay $\epsilon$ apart from each other and the bottom wall at 
the boundary of $D_0^-$ and $D_0^+$ (the corresponding positions of the $3$ 
paths there are marked with dots).}
\end{center}
\label{fig:5-block}
\end{figure}

We are going to show that
\be{eq:new-bound-block} 
\bbP_{-2-T_1,4+T_2,+,\lambda}^{\uu,\uv}( D_0 )
%\text{$D_0$ is good})
\geq \rho_1 , 
\ee
uniformly in $\uu,\uv$ with $u_n,v_n\leq\eta$, $T_1,T_2\in [0,2]$ and $\lambda$ 
sufficiently small.
The target~\eqref{eq:pre-to-good1} is an immediate consequence by the Gibbs 
property and conditioning on the left-most $t_-\in [-4,-2]$ and the right-most 
$t_+\in [4, 6]$ such that $x^\lambda_n(t_-),x^\lambda_n(t_+)\leq\eta$; see 
Fig.~\ref{fig:5-block}.

The proof boils down to deriving an appropriate upper bound on the 
partition function $Z_{-2-T_1,4+T_2,+,\lambda}^{\uu,\uv}$ and an appropriate 
matching lower bound on the constrained partition function
$Z_{-2-T_1,4+T_2,+,\lambda}^{\uu,\uv}[D_0]$.

In the sequel, $\tilde{\ux}^\lambda$ stands for the 
%\blue
{
{\em reversed}  
}
random walk with 
transition probabilities $\tilde{p}_z = p_{-z}$.
Let $\tilde{\tau}$ be the first exit time of $\tilde{\ux}^\lambda$ from 
$\Anlr$.
We assume that the constants $\nu$ and $\rho$ in the probabilistic 
estimates~\eqref{eq:Inp1}-\eqref{eq:Inp3} are chosen in such a way that the 
corresponding bounds hold for the reflected process as well.

\smallskip
\noindent 
\step{1} (An upper bound on $Z_{-2-T_1,4+T_2,+,\lambda}^{\uu,\uv}$.)
Since we are dealing with non-negative potentials,
\begin{align}
Z_{-2-T_1,4+T_2,+,\lambda}^{\uu,\uv}
&\leq 
\hat{\mathbf P}_{2+T_1,\lambda}^{\uu} ( \tau > 2+T_1 )
\max_{\ur,\us\in\Anl} \hat{\mathbf P}^{\ur}_{2,\lambda} ( \ux^\lambda(2) = \us )
\, \hat{\mathbf P}_{2+T_2,\lambda}^{\uv} ( \tilde \tau > 2+T_2 ) \notag\\
&\leq
c_1 h_\lambda^n
\hat{\mathbf P}_{2+T_1,\lambda}^{\uu} ( \tau > 2+T_1 )
\, \hat{\mathbf P}_{2+T_2,\lambda}^{\uv} ( \tilde \tau > 2+T_2 ) .
\label{eq:ub-pf-new}
\end{align}
The second inequality follows from the concentration 
bound~\eqref{eq:conc-bound}. 

\smallskip
\noindent
\step{2} (A lower bound on $Z_{-2-T_1,4+T_2,+,\lambda}^{\uu,\uv}[D_0]$.)
By our assumption~\eqref{eq:HL-3},
\begin{multline}
\label{eq:lb-new1} 
Z_{-2-T_1,4+T_2,+,\lambda}^{\uu,\uv}(D_0) \geq \\
\mathrm{e}^{-10 n q_0 (2\eta)}
\mathbf{P}_{-2-T_1,4+T_2,+,\lambda}^{\uu}
\bigl( D_0 , \,\;\quad\max_{\mathclap{t\in [-2-T_1,4+T_2]}}\quad\;\, 
x^\lambda_n(t) \leq 2\eta , \ux^\lambda(4+T_2) = \uv \bigr) .
\end{multline}
Above $\hat{\mathbf P}^{\uv}_{s,t,+,\lambda}$ is the provisional notation
for the restriction of the law of the rescaled walk started at time $s$
at $\uv$ to the set of trajectories which stay inside $\Anl$ during the time
interval $[s,t]$. 

The probability on the right-hand side of~\eqref{eq:lb-new1} is bounded below
by the following product of three factors:
\begin{align}
&\hat{\mathbf P}_{2+T_1,\lambda}^{\uu} \bigl(
\tau > 2+T_1 , \max_{t\leq 2+T_1} x^\lambda_n(t) \leq 2\eta ,  
\ux^\lambda(2+T_1)\in\Anlr \bigr) \notag\\ 
&\hspace*{1cm}\times
\min_{\ur,\us\in\Anlr} \hat{\mathbf P}_{2,+,\lambda}^{\ur} ( D_0 , 
\ux^{\lambda}(2) = \us ) \notag\\
&\hspace*{1cm}\times
\hat{\mathbf P}_{2+T_2,\lambda}^{\uv} \bigl(
\tilde\tau > 2+T_2 , \max_{t\leq 2+T_2} \tilde x^\lambda_n(t)\leq 2\eta , 
\tilde\ux^\lambda(2+T_2)\in\Anlr \bigr) .
\label{eq:lb-new2}
\end{align}
On the one hand, in view of~\eqref{L3.4:6}, the middle factor is bounded below 
by $c_2 h_\lambda^n$. 
On the other hand, the probabilistic bounds~\eqref{eq:Inp2}, \eqref{eq:Inp3} 
imply that the left-most factor in~\eqref{eq:lb-new2} is bounded below by
$\rho^2\hat{\mathbf P}_{2+T_1,\lambda}^{\uu} ( \tau > 2+T_1 )$.
Similarly, the right-most factor in~\eqref{eq:lb-new2} is bounded below by
$\rho^2 \hat{\mathbf P}_{2+T_2,\lambda}^{\uv} ( \tilde \tau > 2+T_2 )$.
Hence, 
\be{eq:lb-new-final}
Z_{-2-T_1, 4+T_2,+,\lambda}^{\uu,\uv}[D_0] \geq c_3 \rho^4 h_\lambda^n
\,\hat{\mathbf P}_{2+T_1,\lambda}^{\uu} ( \tau > 2+T_1 )
\,\hat{\mathbf P}_{2+T_2,\lambda}^{\uv} ( \tilde\tau > 2+T_2 ) .
\ee
Since
\[
\bbP_{-2-T_1,4+T_2,+,\lambda}^{\uu,\uv} ( D_0 )
%\text{$D_0$ is good} )
= 
\frac{Z_{-2-T_1,4+T_2,+,\lambda}^{\uu,\uv}[D_0]} 
{Z_{-2-T_1,4+T_2,+,\lambda}^{\uu ,\uv}} , 
\]
\eqref{eq:new-bound-block} directly follows from~\eqref{eq:ub-pf-new} 
and~\eqref{eq:lb-new-final}.
\qed

\subsection{Proof of \eqref{eq:calMtilde}.} 

We start by deriving a lower bound on partition functions, as this will allow 
us to exclude sets of pathological trajectories.
\begin{lemma}
\label{lem:LB_PF}
There exist constants $c_1 = c_1(n)$ and $c_2 = c_2(n,\eta)$ and a sufficiently 
large value $T_0 = T_0(\eta)$ such that, for all $T\geq T_0$,
\begin{equation}
\label{eq:roughLB_Zmulti}
Z^{\uw,\uz}_{T,+,\lambda}
\geq
c_2\, e^{-c_1 T} \,
%Z^{\uu ,\uv}_{T,+,0} 
\hat{\mathbf P}_{2T,+,\lambda}^{\uw} ( \ux^\lambda (2T) = \uz ) ,
\end{equation}
uniformly in $\lambda$ small and in $z_n,w_n\leq\eta$.
%provided that $T$ 
%be large enough (depending on $C$).
\end{lemma}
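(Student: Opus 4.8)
The plan is to obtain the lower bound by restricting the (non-negative) partition function $Z^{\uw,\uz}_{T,+,\lambda}$ to a single, carefully chosen family of trajectories whose area tilt is uniformly bounded and whose unconstrained random-walk probability is already comparable to $\hat{\mathbf P}_{2T,+,\lambda}^{\uw}(\ux^\lambda(2T)=\uz)$. Concretely, the trajectories I select are those that, in rescaled $\bbZ_\lambda$-time, first travel from $\uw$ into the regular set $\bbA^{+,\mathsf r}_{n,\lambda}$ during $[0,1]$ (or $[0,2]$), then remain inside a fixed compact tube $\{x_n\le 2\eta\}\cap\bbA^{+}_{n,\lambda}$ during the long middle stretch $[1,2T-1]$ while still landing in $\bbA^{+,\mathsf r}_{n,\lambda}$ at the end, and finally (using the reversed walk) descend to $\uz$ during the last unit interval. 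Along such trajectories every coordinate stays below $2\eta$, so by assumption~\eqref{eq:HL-3} the area tilt $\sum_\ell\sum_i V_{\lambda_N}(X_i^\ell)$ is at most $n\cdot (2T)\cdot q_0(2\eta)$, i.e.\ $\mathrm e^{-\sum V_\lambda}\ge \mathrm e^{-c_1 T}$ with $c_1=c_1(n)$ depending only on $n$ (absorbing $q_0(2\eta)$, a fixed constant, and the factor coming from the two parameters being tuned). This immediately yields
\[
Z^{\uw,\uz}_{T,+,\lambda}\ \ge\ \mathrm e^{-c_1 T}\,
\hat{\mathbf P}_{2T,+,\lambda}^{\uw}\!\bigl(\ux^\lambda(2T)=\uz,\ \max_{s\in[0,2T]}x_n^\lambda(s)\le 2\eta,\ \text{endpoints regular}\bigr),
\]
so it remains to lower-bound this last probability by a constant times $\hat{\mathbf P}_{2T,+,\lambda}^{\uw}(\ux^\lambda(2T)=\uz)$.

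For that I would condition on the positions $\ux^\lambda(1)=\ur$ and $\ux^\lambda(2T-1)=\us$, requiring $\ur,\us\in\bbA^{+,\mathsf r}_{n,\lambda}$, and split the event into three independent pieces by the Markov property: the entrance piece $[0,1]$, the bulk piece $[1,2T-1]$, and the exit piece $[2T-1,2T]$. For the entrance and exit pieces I invoke the probabilistic inputs: by~\eqref{eq:Inp2}--\eqref{eq:Inp3} (and their reversed-walk versions, which are assumed to hold), the probability of reaching a regular endpoint while staying under $2\eta$ is bounded below by $\rho$ times the bare survival probability $\hat{\mathbf P}^{\uw}_{1,\lambda}(\tau>1)$, respectively $\hat{\mathbf P}^{\uz}_{1,\lambda}(\tilde\tau>1)$, and these survival probabilities are themselves bounded below by a dimension-dependent constant since $w_n,z_n\le\eta$ and one step suffices. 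For the long bulk piece, the key point is that, started and ending in $\bbA^{+,\mathsf r}_{n,\lambda}$, the walk can be kept inside the tube $\{x_n\le 2\eta\}$ for the whole duration $2T-2$ with probability bounded below — this is exactly the kind of statement supplied by~\eqref{eq:Inp1} iterated over unit blocks (a bulk survival/confinement estimate, uniform in the length as long as the endpoints are regular), possibly combined with a local-CLT lower bound of the form $\hat{\mathbf P}^{\ur}_{1,+,\lambda}(\ux^\lambda(1)=\us)\ge c\,h_\lambda^n$ on $\bbA^{+,\mathsf r}_{n,\lambda}$ to patch the two regular endpoints to the confined middle.

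Assembling the three factors and summing over the intermediate positions $\ur,\us\in\bbA^{+,\mathsf r}_{n,\lambda}$: the entrance factor contributes (after summing over $\ur$, using $|\bbA^{+,\mathsf r}_{n,\lambda}|\ge c_5(\epsilon)\eta^n h_\lambda^{-n}$ as in~\eqref{eq:Areg-size} and the fact that from $\uw$ the walk lands in a typical $h_\lambda^n$-cell) a constant; the exit factor likewise a constant; and the bulk factor, by the confinement estimate together with a matching upper bound from the concentration inequality~\eqref{eq:conc-bound}, reproduces $\hat{\mathbf P}_{2T,+,\lambda}^{\uw}(\ux^\lambda(2T)=\uz)$ up to a constant $c_2=c_2(n,\eta)$. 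This gives~\eqref{eq:roughLB_Zmulti} for all $T\ge T_0(\eta)$, the threshold $T_0$ being needed so that the bulk interval has length at least a couple of blocks and the inputs~\eqref{eq:Inp1}--\eqref{eq:Inp3} apply on intervals of length in a fixed range $[a,b]$ after a dyadic or unit decomposition.

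I expect the main obstacle to be the bulk confinement estimate — i.e.\ arguing that the probability of staying under $2\eta$ over a long time window, conditioned on regular endpoints, does not decay faster than a constant times the unconstrained transition probability. This is not literally one of the black-box inputs~\eqref{eq:Inp1}--\eqref{eq:Inp3} (which are stated for bounded time horizons), so it must be bootstrapped from them: one decomposes $[1,2T-1]$ into unit blocks, uses~\eqref{eq:Inp1} on each block to propagate from one regular configuration to the next while remaining under $2\eta$, and multiplies — but one must be careful that the per-block cost is a \emph{constant} (not a $\lambda$- or length-dependent factor) and that the final local-CLT pinning to $\uz$ is compatible with the confinement, which is where the regular-set lower bound~\eqref{eq:Areg-size} and the local limit asymptotics for walks with exponential tails enter. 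Everything else — the area bound via~\eqref{eq:HL-3}, the Markov splitting, and the concentration upper bound~\eqref{eq:conc-bound} — is routine.
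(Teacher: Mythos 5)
Your overall skeleton --- the Markov splitting at times $1$ and $2T-1$, the use of \eqref{eq:Inp2}--\eqref{eq:Inp3} for the two end pieces, and the concentration bound \eqref{eq:conc-bound} to compare with $\hat{\mathbf P}_{2T,+,\lambda}^{\uw}(\ux^\lambda(2T)=\uz)$ --- coincides with the paper's, but there are two genuine gaps in the bulk of the argument. The first concerns the confinement level. You keep all coordinates below $2\eta$ for the whole duration and bound the tilt by $\mathrm{e}^{-cnq_0(2\eta)T}$, claiming this is $\mathrm{e}^{-c_1T}$ with $c_1=c_1(n)$ after ``absorbing $q_0(2\eta)$, a fixed constant''. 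It is not a fixed constant: it depends on $\eta$, so your $c_1$ depends on $\eta$. The lemma insists on $c_1=c_1(n)$, and this is not cosmetic: downstream, the paper must choose $\eta$ so large that $q_0(\eta)>2c_8(\epsilon)$ in \eqref{eq:eta-eps-M}, where $c_8$ is built from the $c_1$ of this lemma; if $c_1$ grew with $q_0(2\eta)$, that choice could fail. The paper's fix is to confine the walk below the \emph{fixed} level $2$ (not $2\eta$) on the long middle stretch $[1,2T-1]$ (the event $\calE_T$), so the $T$-proportional tilt cost is $\mathrm{e}^{-2nq_0(2)T}$ with $q_0(2)$ absolute, while the $\eta$-dependent cost $\mathrm{e}^{-2nq_0(2\eta)}$ is paid only over the two unit end intervals and goes into $c_2(n,\eta)$.

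The second gap is your ``bulk confinement estimate''. You hope that staying in the tube over $[1,2T-1]$, given regular endpoints, costs only a constant, and you correctly flag this as the main obstacle --- but it is not merely hard, it is false: entropic repulsion pushes the top path up on scale $\sqrt{t}$, so confinement below any fixed level over time of order $T$ costs $\mathrm{e}^{-cT}$. Iterating \eqref{eq:Inp1} over unit blocks, as you suggest, likewise produces a per-block constant strictly less than one and hence an $\mathrm{e}^{-cT}$ total. The saving grace is that the lemma tolerates an $\mathrm{e}^{-c_1T}$ loss, so an exponentially small bulk probability suffices \emph{provided} the rate depends only on $n$. The paper secures this with the explicit bound \eqref{eq:lb-tube2}: each of the $n$ paths is forced into a space-time tube of width $\epsilon/4$ around the straight segment joining its endpoints, with $\epsilon=\epsilon(n)$ fixed by \eqref{eq:eps-bound}, giving a lower bound $\mathrm{e}^{-c_6(\epsilon)T}h_\lambda^n$ after a local CLT at the last step. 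As written, your argument either asserts a false constant lower bound or, once corrected to an exponential one, still needs such an explicit construction to guarantee that the rate is independent of $\eta$.
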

\begin{proof} 
The point is that constant $c_1$ does not depend on $\eta$, only on the 
dimension $n$. The dependence of $c_1$ on $n$ is expressed in terms of 
the dependence of $\epsilon$ (in the definition of the regular set $\Anlr$, 
see~\eqref{eq:Anr}) on $n$. 
We shall work with a fixed small value of $\epsilon>0$ which satisfies
\be{eq:eps-bound}
n\epsilon < 1 . 
\ee
In the sequel, we consider $T>2$.  
Let $\Anlr(\alpha) = \Anlr \cap \{ \ux \,:\, x_n \leq \alpha \}$. Consider the
events
\begin{align}
\calE_- &= \bigl\{ \max_{t\in [0,1]} x_n^\lambda(t)\leq 2\eta \,,
\ux(1)\in\Anlr(1) \bigr\} , \notag\\
\label{eq:E-event}
\calE_{+} &= \bigl\{ \max_{t\in [2T-1,2T]} x_n^\lambda(t)\leq 2\eta \,, 
\ux(2T-1)\in\Anlr(1) \bigr\}
\intertext{and}
\calE_T &= \bigl\{ \max_{t\in [1,2T-1]} x_n^\lambda(t)< 2 \bigr\} . \notag
\end{align}
On the one hand, by~\eqref{eq:HL-3},  
\begin{equation}
\label{eq:lb-pf-1} 
Z^{\uw,\uz}_{T,+,\lambda}
\geq
\mathrm{e}^{- 2n q_0(2\eta) - 2n q_0(2) T} \,
\hat{\mathbf P}_{2T,+,\lambda}^{\uw} \bigl( \calE_{-} , \calE_T , \calE_+ ,
\ux^\lambda(2T) = \uz \bigr) . 
\end{equation}
On the other hand, 
\be{eq:ub-prob-2T} 
\hat{\mathbf P}_{2T, + , \lambda}^{\uw}\lb \ux^\lambda (2T) = \uz\rb 
\leq c_3 \hat{\mathbf P}_{1, \lambda}^{\uw}\lb \tau >1 \rb 
 \hat{\mathbf P}_{1, \lambda}^{\uz}\lb \tilde \tau >1 \rb  
\frac{h_\lambda^n}{T^{n/2}} .
\ee
Above, we relied on the concentration bound~\eqref{eq:conc-bound}. 
%and $\tilde{\tau}$ is the exit time from $\Anl$ for the
% reversed walk, as it was   
%defined in \eqref{eq:tilde-tau}.  

In order to compare the probabilities appearing in~\eqref{eq:lb-pf-1} 
and~\eqref{eq:ub-prob-2T}, note that an application of 
\eqref{eq:Inp1}-\eqref{eq:Inp3} (and the observation that, as 
in~\eqref{eq:Areg-size}, the cardinality $\abs{\Anlr(1)} \geq c_4(\epsilon) 
h_\lambda^{-n}$) yields
\begin{multline}
\label{eq:lb-tube1}
\hat{\mathbf P}_{2T,+,\lambda}^{\uw}
(\calE_{-} , \calE_T , \calE_+ , \ux^\lambda(2T) = \uz )
\geq
c_5 \hat{\mathbf P}_{1,\lambda}^{\uw} ( \tau > 1 )
\hat{\mathbf P}_{1,\lambda}^{\uz} ( \tilde\tau > 1 ) \\ 
\times 
\min_{\uu,\uv\in\Anlr(1)} \hat{\mathbf P}_{2(T-1),+,\lambda}^{\uu}
\bigl( \max_{t\in [0,2(T-1)]} x^\lambda_n(t) < 2 \,,\, 
\ux^\lambda(2(T-1)) = \uv \bigr) .
\end{multline}
However, 
\begin{equation}
\label{eq:lb-tube2} 
\min_{\uu,\uv\in\Anlr(1)} \hat{\mathbf P}_{2(T-1),+,\lambda}^{\uu}
\bigl(
\max_{t\in [0,2(T-1)]} x^\lambda_n(t) < 2 \,,\, \ux^\lambda(2(T-1)) = \uv
\bigr)
\geq
\mathrm{e}^{-c_6(\epsilon) T} h_\lambda^n .
\end{equation}
Indeed, consider $n$ walks $x^\lambda_\ell$, $\ell=1,\ldots,n$, which go 
from $u_\ell$ to $v_\ell$ inside space-time tubes of width $\epsilon/4$ 
centered around the space-time segments $[(u_\ell,0),(v_\ell,2(T-1)]$.
By construction, these walks stay in $\Anl \cap \{\ux \,:\, x_n < 2\}$. By a
coarse splitting into time-blocks of lengths of order $\epsilon^2$, we bound 
from below the probability of staying within such tubes by 
$\mathrm{e}^{-c_6(\epsilon) T}$. Applying the local CLT for the last step, we 
bound from below the probability of ending up in $\uz$ by a multiple of 
$h_\lambda^n$. \eqref{eq:lb-tube2} follows. 

\noindent
The bound~\eqref{eq:roughLB_Zmulti} is a direct consequence 
of~\eqref{eq:ub-prob-2T} and~\eqref{eq:lb-pf-1}, \eqref{eq:lb-tube1} 
and~\eqref{eq:lb-tube2}.
\end{proof}
Let us resume the proof of~\eqref{eq:calMtilde}. Without loss of 
generality, we shall assume that $a=3M$ and $b\geq a$. In the sequel, the 
trajectory $\ux^\lambda$ is sampled from $\bbP_{a,+,\lambda}^{\ur,\us}$ and 
$\uy^\lambda$ is sampled from $\bbP_{b,+,\lambda}^{\uu,\uv}$. Recall that 
$r_n,s_n,u_n,v_n\leq C\leq\eta$.

In principle, $b$ can be much larger than $M$. Let us verify that one 
can restrict our attention to the case where $b$ is of the same order as $M$. 
Define the random variables $B_{\pm}\geq 0$ via (see 
Figure~\ref{fig:Bplusminus})
\begin{equation}
\label{eq:b-pm}
-2M - B_- = \max \{ t\leq -2M \,:\, y_n^\lambda(t) \leq \eta \}  
\end{equation}
and, accordingly,
$2M + B_+ = \min \{ t\geq 2M \,:\, y_n^\lambda(t) \leq \eta \}$.
\begin{figure}[t]
\begin{center}
\resizebox{13.5cm}{!}{\input{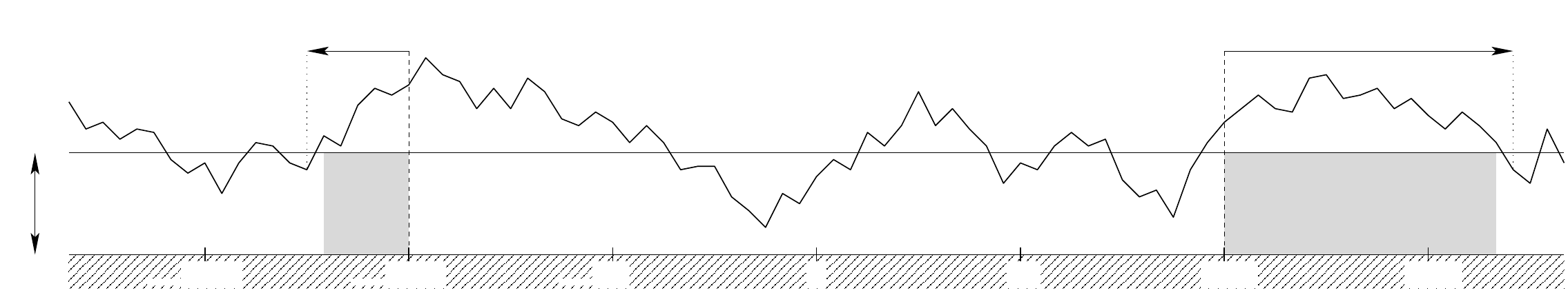_t}}
\caption{\small The definition of the random variables $B_-$ and $B_+$. Note 
that the shaded areas are necessarily below the path $y^\lambda_n$ and thus 
contribute to the area-tilt.}
\end{center}
\label{fig:Bplusminus}
\end{figure}%
By the Gibbs property,
\begin{multline}
\bbP_{b,+,\lambda}^{\uu,\uv} ( B_\pm = b_\pm ) \\
\leq
\max_{w_n,z_n\leq\eta}
\bbP_{-2M-b_-,2M+b_+,+,\lambda}^{\uw,\uz} 
\bigl(
\min_{t\in (-2M-b_-,-2M]} x^\lambda_n(t) \wedge
\!\!\!\min_{t\in [2M,2M+b_+)}\!\! x^\lambda_n(t) >\eta \bigr) .
\label{eq:gibbs-bpm}
\end{multline}
Therefore, in view of~\eqref{eq:HL-3},
\begin{equation}
\label{eq:b-pm-fin1} 
\bbP_{b,+,\lambda}^{\uu,\uv} ( B_\pm = b_\pm )
\leq
\mathrm{e}^{-(b_- + b_+) q_0(\eta)}
\max_{w_n,z_n\leq\eta}
\frac{\hat{\mathbf P}_{T,+,\lambda}^{\uw} ( \ux^\lambda(T) = \uz )}
{\hat Z_{T,+,\lambda}^{\uw,\uz}} ,
\end{equation}
where we have set $T = 4M + b_- + b_+$. Using the lower 
bound~\eqref{eq:roughLB_Zmulti} on $\hat Z_{T,+,\lambda}^{\uw,\uz}$, we conclude 
that
\begin{equation}
\label{eq:b-pm-fin2}
\bbP_{b,+,\lambda}^{\uu,\uv} ( B_\pm = b_\pm )
\leq 
c_7(\epsilon) \mathrm{e}^{c_8(\epsilon) M - (b_-+b_+) q_0(\eta)} .
\end{equation}
Therefore, if we choose $\eta$ so large that
\begin{equation}
\label{eq:eta-eps-M}
q_0(\eta) > 2 c_8(\epsilon) ,
\end{equation}
then we may ignore the case $b_\pm \geq M$.

\smallskip
\noindent 
Consequently, \eqref{eq:calMtilde} will follow once we check that
\begin{equation}
\label{eq:calMtilde-M}
\bbP_{3M,+,\lambda}^{\ur,\us}\otimes\bbP_{-b_1,b_2,+,\lambda}^{\uu,\uw}
( \calM_0^{(5)} \leq \nu^{(5)} M )
\leq
\mathrm{e}^{- \kappa^{(5)} M} ,
\end{equation}
uniformly in $\lambda$ small, $M$ large, $b_1,b_2\in [2M,3M]$ and 
$r_n,s_n,u_n,v_n\leq\eta$.

If we choose $\nu^{(5)}$ to be sufficiently small, for instance smaller than 
$0.2$, then, by~\eqref{eq:HL-3},
\begin{multline}
\label{eq:calM-bd1}
\bbP_{3M,+,\lambda}^{\ur,\us}\otimes\bbP_{b,+,\lambda}^{\uu,\uw}
( \calM_0^{(5)} \leq \nu^{(5)} M ) \\
\leq
\mathrm{e}^{- \frac{q_0(\eta)}{10} M}
\frac{
\hat{\mathbf P}^{\ur}_{6M,+,\lambda} ( \ux^\lambda(3M) = \us )
\hat{\mathbf P}^{\uu}_{b_1+b_2,+,\lambda} ( \uy^\lambda(b) = \uw )
}{
Z^{\ur,\us}_{3M,+,\lambda} Z^{\uu,\uw}_{-b_1,b_2,+,\lambda}
} .
\end{multline}
Taking $\eta$ and $M$ large enough and applying~\eqref{eq:roughLB_Zmulti}, we
arrive to~\eqref{eq:calMtilde-M}.
\qed

\appendix

\section{Strong approximation techniques.}

In order to 
%obtain corresponding lower bonds  
prove~\eqref{eq:Inp1}, we are going to apply strong approximation techniques
from~\cite{DW15}. By rescaling, it is sufficient to consider the case $t=1$.

In the sequel, $\hat{\mathbf{P}}^{\ur}_+$ denotes the restriction of the law of 
the $n$-dimensional Brownian motion $\uB$ started at $\ur$ to the set 
$\mathbb{A}^+_{n}$. 

%Fix 
%{
%$\delta >0$ 
%}
Define
\[
%\delta
O_\epsilon(\uz) = \{\uy \,:\, |y_i-z_i| \leq 
%\blue
{
\frac{\epsilon}{3}
}
\text{ for all } i\} 
.
\]
It follows easily from~\cite[Lemma~17]{DW15} that
\begin{multline*}
\hat{\mathbf{P}}^{\ur}_{+,\lambda}
(\ux^\lambda(1-\gamma)\in O_\epsilon(\uz) ,
\max_{t\leq1-\gamma} x^\lambda_n(t) \leq 2\eta) \\
=
\hat{\mathbf{P}}^{\ur}_+ (\uB(1-\gamma) \in O_\epsilon(\uz) ,
\max_{t\leq 1-\gamma} B_n(t) \leq 2\eta) + o(1) ,
\end{multline*}
uniformly in $\ur,\uz\in\Anlr$. This implies that there exists a constant
$c(\epsilon,\eta,\gamma)>0$ such that
\begin{equation}
\label{L3.4:4}
\hat{\mathbf{P}}^{\ur}_{+,\lambda} (\ux^\lambda(1-\gamma) \in O_\epsilon(\uz) ,
\max_{t\leq 1-\gamma} x^\lambda_n(t) \leq 2\eta)
\geq
c(\epsilon,\eta,\gamma) ,
\end{equation}
for all $\ur,\uz\in\Anlr$. Since $O_\epsilon(\uz)$ is separated from the 
boundary of $\mathbb{A}_n^+$, 
we may choose $\gamma$ so small that the 
probability that the random walk $x^\lambda$ started at 
$\uy\in O_\epsilon(\uz)$ has, at time $\gamma$, the value $\uz$ and leaves 
$\mathbb{A}_{n,\lambda}^+$ before time $\gamma$ is quite small. This heuristic 
is made precise in~\cite[Lemma~29]{DW15}. In our notation, we can state that 
result as follows: 
There exist $a>0$ and  $c_1<\infty $ such that
\begin{multline*}
\hat{\mathbf{P}}^{\uy}_{+,\lambda} ( \ux^\lambda(\gamma) = \uz ,
\max_{t\leq\gamma} x^\lambda_n(t) \leq 2\eta ) \\
\geq
\hat{\mathbf{P}}^{\uy}_\lambda ( \ux^\lambda(\gamma) = \uz ,
\max_{t\leq\gamma} x^\lambda_n(t) \leq 2\eta )
- c_1 \gamma^{-n/2} e^{-a\epsilon^2/\gamma}h_\lambda^{n} .
\end{multline*}
By a similar argument, one can show that
\[
\hat{\mathbf{P}}^{\uy}_\lambda ( \ux^\lambda(\gamma) = \uz ,
\max_{t\leq\gamma} x^\lambda_n(t) \leq 2\eta ) \\
\geq
\hat{\mathbf{P}}^{\uy}_\lambda ( \ux^\lambda(\gamma) = \uz )
- c_2 \gamma^{-n/2} e^{-a\eta^2/\gamma} h_\lambda^{n} .
\]
Finally, by the standard local limit theorem,
\[
\hat{\mathbf{P}}^{\uy}_\lambda ( \ux^\lambda(\gamma) = \uz )
\geq c_3 \gamma^{-n/2} h_\lambda^{n} .
\]
As a result, we have the bound
\[
\hat{\mathbf{P}}^{\uy}_{+,\lambda} ( \ux^\lambda(\gamma) = \uz ,
\max_{t\leq\gamma} x^\lambda_n(t) \leq 2\eta)
\geq c_4 h_\lambda^{n} ,
\]
uniformly in $\uy,\uz\in\Anlr$.
Combining this bound with~\eqref{L3.4:4}, we infer that
\begin{equation}
\label{L3.4:5}
\hat{\mathbf{P}}^{\ur}_{1,+,\lambda} ( \ux^\lambda(1) = \uz ,
\max_{t\leq 1} x_n^\lambda(t) \leq 2\eta )
\geq c_5 h_\lambda^{n} ,
\end{equation}
uniformly in $\ur,\uz\in\Anlr$. \qed

\section{Invariance principles for random walks in Weyl chambers}

Conditional limit theorems and conditional invariance principles for random 
walks in different cones have been studied in~\cite{DW15} and~\cite{DW15b}. All 
the results in these papers are proved in the case when the non-rescaled walk 
starts at a fixed point. In this paragraph, we give certain improvements of
these results to the case when the starting point of the non-rescaled walk may
grow (but we shall consider walks in Weyl chambers only).

{
More precisely, we shall the following subsets of the euclidian space:
\begin{itemize}
 \item chamber of type $A$: $\{x: x_1<x_2<\ldots<x_n\}$;
 \item chamber of type $C$: $\{x: 0<x_1<x_2<\ldots<x_n\}$;
 \item chamber of type $D$: $\{x: |x_1|<x_2<\ldots<x_n\}$.
\end{itemize}
}

Let $u_W$ denote the unique (up to a constant multiplier) positive harmonic
function on $W$:
\begin{itemize}
\item if $W$ is the chamber of type $A$, then
$u_W(x) = \prod_{i<j} (x_j-x_i)$;
\item if $W$ is the chamber of type $C$, then
$u_W(x) = \prod_k x_k \prod_{i<j} (x^2_j-x^2_i)$;
\item if $W$ is the chamber of type $D$, then 
$u_W(x)=\prod_{i<j} (x^2_j-x^2_i)$.
\end{itemize}
\begin{proposition}
\label{inv.prop.1}
Let $W$ be a Weyl chamber of type $A$, $C$ or $D$. Let $\tau$ be the first exit 
time from $W$, that is,
\[
\tau = \inf \{t>0 \,:\, \ux^\lambda(t)\notin W\}.
\]
Then, as $\ur=\ur_\lambda\to 0$,
\[
\hat{\mathbf{P}}^{\ur}_\lambda ( \ux^\lambda(1)\in\,\cdot \;|\, \tau>1 )\to \mu
\quad\text{weakly},
\]
where $\mu$ is the probability measure on $W$ with density proportional to 
$u_W(x)e^{-|x|^2/2}$.

Furthermore, under $\hat{\mathbf{P}}^{\ur}_\lambda$, $\ux^\lambda$ converges
weakly on $C[0,1]$ to the Brownian meander in $W$ started at zero.
\end{proposition}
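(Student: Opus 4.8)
The plan is to derive Proposition~\ref{inv.prop.1} from its Brownian analogue by means of a strong approximation, the only genuinely new feature being that the non-rescaled starting point $\Hla\ur_\lambda$ is now allowed to diverge. Throughout, $W$ denotes one of the chambers $A$, $C$ or $D$, the number $p$ is the degree of homogeneity of the positive harmonic function $u_W$, $\tau_W$ is the first exit time from $W$, and $\mathbf q_t(x,y)$ is the Dirichlet heat kernel of $W$. We shall use repeatedly that $\sfp$ has finite exponential moments, \eqref{eq:Assumption}, and identity covariance, \eqref{eq:sigma}, so that the multidimensional Koml\'os--Major--Tusn\'ady (or Zaitsev) strong approximation is available.

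First, the Brownian statement. As $x\to 0$, Brownian motion in $W$ started at $x$ and conditioned on $\{\tau_W>1\}$ converges on $C[0,1]$ to the Brownian meander in $W$ from $0$, and its position at time $1$ converges weakly to the probability measure $\mu$ with density proportional to $u_W(y)\mathrm e^{-|y|^2/2}$. This is classical; it rests on the vertex expansion of the Dirichlet heat kernel, $\mathbf q_1(x,y)=u_W(x)\bigl(c_0\,u_W(y)\mathrm e^{-|y|^2/2}+o(1)\bigr)$ as $x\to 0$, uniformly for $y$ in compact subsets of $W$, together with standard Gaussian-type upper bounds on $\mathbf q_t$ that are explicit in the distance from $x$ to the apex: the former yields the finite-dimensional distributions (iterate in the first time variable; the factor $u_W(x)$ cancels against the normalisation), and the latter yields convergence of the normalising constants by dominated convergence as well as tightness on $C[0,1]$ via Kolmogorov's criterion. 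Equivalently, this is the Brownian special case of the results of~\cite{DW15, DW15b}.

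Next, transfer to the walk. By strong approximation one couples, on a common probability space, the rescaled walk $\ux^\lambda$ with a Brownian motion $\uB^\lambda$, both started at $\ur_\lambda$, in such a way that
\[
\mathbf P\Bigl(\sup_{t\le 1}\bigl|\ux^\lambda(t)-\uB^\lambda(t)\bigr|>c_1 h_\lambda\log\Hla\Bigr)\le \Hla^{-M}
\]
for any prescribed $M$. Since $\ur_\lambda\in\bbA_{n,\lambda}^+$ has all coordinate gaps at least $h_\lambda$, one has $u_W(\ur_\lambda)\ge c_2 h_\lambda^{p}$, while the exit-time estimates of~\cite{DW15} applied to the walk give $\hat{\mathbf P}^{\ur}_\lambda(\tau_W>1)\asymp u_W(\ur_\lambda)\ge c_2 h_\lambda^{p}$ (uniformly, since $|\Hla\ur_\lambda|=o(\Hla)$ lies well inside the diffusive range); taking $M>p$, the exceptional event of the coupling is negligible relative to the conditioning event. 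On the complementary event $\ux^\lambda$ and $\uB^\lambda$ stay within an $O(h_\lambda\log\Hla)$ tube, so what remains is to discard those trajectories which stay in $W$ while their Brownian partner enters the boundary layer $\{\,\operatorname{dist}(\cdot,\partial W)\le 2c_1 h_\lambda\log\Hla\,\}$ at some time in $[0,1]$. For times bounded away from $0$ this is controlled by the heat-kernel bounds of the previous step; near time $0$ it is precisely the ``escape from the apex'' estimate of~\cite[Lemmas~17 and~29]{DW15} and its reflection-group analogues, and the substance of this step is to verify that those estimates are uniform over starting points $\Hla\ur_\lambda$ in the range considered. Combining the two steps gives the asserted convergence of the one-time marginal and of the whole path on $C[0,1]$.

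The hard part will be this last uniformity. When $\ur_\lambda$ is as small as a single lattice spacing, $\ux^\lambda$ starts within $O(h_\lambda)$ of $\partial W$, and one must show both that it reaches the bulk of $W$ with probability comparable to the Brownian value $\asymp u_W(\ur_\lambda)$ and that, on that event, the coupling is already sharp enough to read off the meander. Concretely, this amounts to revisiting the local central limit theorem and Green-function inputs of~\cite{DW15, DW15b} with the starting point treated as a parameter that may diverge; the homogeneity of $u_W$ together with the exponential moments of $\sfp$ are what make this bookkeeping go through.
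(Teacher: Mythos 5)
Your overall architecture (prove the Brownian statement, then transfer to the walk by a KMT coupling) is plausible, but it hinges on a two-sided asymptotic that you assert rather than prove, and which is precisely what the proposition is designed to circumvent. You write that the exit-time estimates of \cite{DW15} give $\hat{\mathbf P}^{\ur}_\lambda(\tau_W>1)\asymp u_W(\ur_\lambda)$. The results of \cite{DW15,DW15b} are proved for non-rescaled walks started at a \emph{fixed} point, i.e.\ for $\ur_\lambda=h_\lambda a$ with $a$ fixed; the entire purpose of this appendix is to handle starting points $\ur_\lambda\to 0$ whose non-rescaled counterparts may diverge, and for those no such two-sided bound is available off the shelf. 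The same missing uniformity resurfaces when you propose to ``discard those trajectories which stay in $W$ while their Brownian partner enters the boundary layer'': to show that this event has probability $o\bigl(\hat{\mathbf P}^{\ur}_\lambda(\tau>1)\bigr)$ you must calibrate the error against the survival probability, uniformly in $\ur_\lambda$. You correctly flag this as ``the hard part'', but the resolution you offer (homogeneity of $u_W$ plus exponential moments make ``this bookkeeping go through'') is a restatement of the difficulty, not an argument.

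The paper's proof is structured exactly so as to never determine the asymptotics of $\hat{\mathbf P}^{\ur}_\lambda(\tau>1)$. It uses only a \emph{one-sided} bound $\hat{\mathbf P}^{\ur}_\lambda(\tau>1)\ge C_2 h_\lambda^p$, obtained by lattice monotonicity from the minimal starting point $\ur_0$, to which \cite[Theorem~1]{DW15} does apply. Combined with \cite[Lemma~14]{DW15} and Doob's exponential inequality, this shows that, on the conditioning event, the walk reaches the bulk region $W_{\lambda,\epsilon}$ by time $H_\lambda^{-2\epsilon}$ while staying within $2\theta_\lambda$ of the origin. Applying the Markov property at the bulk-entry time $\nu_{\lambda,\epsilon}$ and \cite[Lemma~20]{DW15}, both $\hat{\mathbf P}^{\ur}_\lambda(\tau>1)$ and $\hat{\mathbf P}^{\ur}_\lambda(\ux^\lambda(1)\in A,\,\tau>1)$ factor as the \emph{same} unknown quantity $\hat{\mathbf E}^{\ur}_\lambda\bigl[u_W(\ux^\lambda(\nu_{\lambda,\epsilon}));\dots\bigr]$ times explicit constants (and, for the numerator, $\int_A u_W(z)\mathrm e^{-|z|^2/2}\,\dd z$); the unknown factor cancels in the ratio. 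If you wish to keep your coupling-based route, you must supply the uniform two-sided heat-kernel and survival estimates yourself, which is a substantial piece of work; otherwise the argument should be restructured around a cancellation of this kind.
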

By ``Brownian meander in $W$'', we mean a Brownian motion conditioned on 
staying in $W$ up to time one. If the starting point lies inside $W$, then one 
has a condition of positive probability. However, if the starting 
point lies on the boundary of $W$, then the probability of the condition is zero
and it is not at all clear how one can construct such a process.
Garbit~\cite{Garbit2009} has constructed Brownian meanders started at zero for a 
quite large class of cones. This class includes Weyl chambers.
\begin{proof}
The main difference with~\cite[Theorem~3]{DW15} is that we find the limit
for conditional distributions without determining the asymptotic behavior of
$\hat{\mathbf{P}}^{\ur}_\lambda (\tau>1)$. (Recall once again that~\cite[Theorem 
3]{DW15} is proven under the assumption $\ur=h_\lambda a$ for some fixed $a\in 
W$.)

Fix some $\epsilon\in (0,1/2)$ and define the stopping time
\[
\nu_{\lambda,\epsilon} = \inf\{t>0 \,:\, \ux^\lambda(t)\in 
W_{\lambda,\epsilon}\} ,
\]
where
\[
W_{\lambda,\epsilon} =
\{x\in W \,:\, \operatorname{dist}(x,\partial W)\geq H_\lambda^{-2\epsilon}\} .
\]
According to~\cite[Lemma 14]{DW15},
\begin{equation}
\label{inv.1}
\hat{\mathbf{P}}^{\ur}_\lambda (\tau > H_\lambda^{-2\epsilon} ,
\nu_{\lambda,\epsilon} > H_\lambda^{-2\epsilon})
\leq e^{-c_1 H_\lambda^{2\epsilon}} ,
\end{equation}
uniformly in $\ur$. Since we consider lattice random walks, there exists $\ur_0$ 
such that
\[
\hat{\mathbf{P}}^{\ur}_\lambda (\tau>1) \geq 
\hat{\mathbf{P}}^{\ur_0}_\lambda (\tau>1) .
\]
(If $W$ is of type $A$ or $C$, then we may take $r_0=h_\lambda(1,2,\ldots,n)$, 
while if $W$ is of type $D$, then we may take $r_0=h_\lambda(0,1,\ldots,n-1)$.) 
According to~\cite[Theorem 1]{DW15},
\[
\hat{\mathbf{P}}^{\ur_0}_\lambda (\tau>1) \sim C_1 h_\lambda^p ,
\]
where $p$ is a positive constant depending on the type of $W$ only. 
Consequently,
\begin{equation}
\label{inv.2}
\hat{\mathbf{P}}^{\ur}_\lambda (\tau>1) \geq C_2 h_\lambda^p ,
\end{equation}
uniformly in $\ur$. Combining~\eqref{inv.1} and~\eqref{inv.2}, we infer that
\begin{equation}
\label{inv.3}
\frac{
\hat{\mathbf{P}}^{\ur}_\lambda (\tau > H_\lambda^{-2\epsilon} ,
\nu_{\lambda,\epsilon} > H_\lambda^{-2\epsilon})
}{
\hat{\mathbf{P}}^{\ur}_\lambda (\tau>1)
}
\to 0, \quad \lambda\downarrow 0 ,
\end{equation}
uniformly in $\ur$. 
Furthermore, it follows from the exponential Doob inequality that
\[
\hat{\mathbf{P}}^{\ur}_\lambda \bigl(
\max_{t\leq H_\lambda^{-2\epsilon}} 
|\ux^\lambda(t)-\ux^\lambda(0)| > \theta_\lambda \bigr)
\leq
e^{-c_2 \theta_\lambda^2 H_\lambda^\epsilon} ,
\]
where $\theta_\lambda\to 0$ sufficiently slowly. This implies that, whenever
$|\ur|\leq \theta_\lambda$,
\begin{equation}
\label{inv.4}
\frac{
\hat{\mathbf{P}}^{\ur}_\lambda \bigl(
\max_{t\leq H_\lambda^{-2\epsilon}} |\ux^\lambda(t)| > 2\theta_\lambda
\bigr)
}{
\hat{\mathbf{P}}^{\ur}_\lambda (\tau>1)
}
\to 0 .
\end{equation}
It follows now from~\eqref{inv.3} and~\eqref{inv.4} that, uniformly in $\ur$,
\begin{equation}
\label{inv.5}
\hat{\mathbf{P}}^{\ur}_\lambda (\tau>1) = (1+o(1)) \, 
\hat{\mathbf{P}}^{\ur}_\lambda (\tau>1 ,
\nu_{\lambda,\epsilon}\leq H_\lambda^{-2\epsilon} ,
\max_{t\leq\nu_{\lambda,\epsilon}} |\ux(t)| \leq 2\theta_\lambda)
\end{equation}
and
\begin{multline}
\label{inv.6}
\hat{\mathbf{P}}^{\ur}_\lambda (\ux^\lambda(1)\in A , \tau>1) \\
=
(1+o(1))\, \hat{\mathbf{P}}^{\ur}_\lambda
(\ux^\lambda(1)\in A , \tau>1 ,
\nu_{\lambda,\epsilon}\leq H_\lambda^{-2\epsilon} ,
\max_{t\leq\nu_{\lambda,\epsilon}} |\ux(t)|\leq 2\theta_\lambda)
\end{multline}
for any compact $A\subset W$.

Using the Markov property at time $\nu_{\lambda,\epsilon}$ and 
applying~\cite[Lemma~20]{DW15}, we obtain from~\eqref{inv.5} and~\eqref{inv.6}
\begin{align*}
\hat{\mathbf{P}}^{\ur}_\lambda (\tau>1) =
(c_3+o(1))\, h_\lambda^p\, \hat{\mathbf{E}}^{\ur}_\lambda 
\bigl[u_W(\ux^\lambda(\nu_{\lambda,\epsilon})) \,;\,
\nu_{\lambda,\epsilon}\leq H_\lambda^{-2\epsilon} ,
\max_{t\leq\nu_{\lambda,\epsilon}} |\ux(t)|\leq 2\theta_\lambda \bigr]
\end{align*}
and 
\begin{multline*}
\hat{\mathbf{P}}^{\ur}_\lambda (\ux^\lambda(1) \in A , \tau>1)
=
(c_4+o(1))\, h_\lambda^p\, \int_A u_W(z) e^{-|z|^2/2} \dd z \\
\times
\hat{\mathbf{E}}^{\ur}_\lambda \bigl[ 
u_W(\ux^\lambda(\nu_{\lambda,\epsilon})) \,;\,
\nu_{\lambda,\epsilon}\leq H_\lambda^{-2\epsilon} ,
\max_{t\leq\nu_{\lambda,\epsilon}} |\ux(t)|\leq 2\theta_\lambda \bigr] .
\end{multline*}
Thus, the proof of the first statement is completed.

To prove the functional convergence, it suffices to repeat the proof 
of~\cite[Theorem~1]{DW15b} using~\eqref{inv.3} and~\eqref{inv.4} instead 
of the corresponding estimates therein.
\end{proof}

\begin{corollary}
\label{inv.corr.1}
Let $W$ be the chamber of type $C$. If $\ur=\ur_\lambda \to \ur^*\in\partial W$,
then the sequence
$\hat{\mathbf{P}}^{\ur}_\lambda (\ux^\lambda(1)\in A \,|\, \tau>1)$
converges weakly. The densities of limiting laws on $W$ has are uniformly 
bounded. Moreover, $\ux^\lambda$ converges weakly on $C[0,1]$ towards the 
Brownian meander in $W$ started at $\ur^*$.
\end{corollary}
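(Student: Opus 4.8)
The plan is to mimic, blockwise, the proof of Proposition~\ref{inv.prop.1}, replacing the apex $0$ by the boundary point $\ur^*$ and the chamber $W$ by its tangent cone at $\ur^*$. First I would record the local geometry of $W=\{0<x_1<\dots<x_n\}$ at $\ur^*$: partition $\{1,\dots,n\}$ into maximal consecutive blocks $I_1<\dots<I_k$ on which $\ur^*$ is constant, and let $0\le a_1<a_2<\dots<a_k$ be the corresponding values. There is $\delta_0>0$ such that, on $B(\ur^*,\delta_0)$, the constraints defining $W$ that are \emph{active} at $\ur^*$ are exactly the within-block inequalities $x_i<x_{i+1}$ together with $x_1>0$ when $a_1=0$; hence $W$ coincides on $B(\ur^*,\delta_0)$ with $\prod_{j=1}^{k}V_j$, where $V_j$ (acting on the $I_j$-coordinates, with apex/wall at $a_j$) is a Weyl chamber of type $A$ if $a_j>0$ and of type $C$ if $a_j=0$ (possible only for $j=1$), a one-dimensional block being understood as $\bbR$, resp.\ $\bbR_+$. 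I denote by $u_{\ur^*}(z)=\prod_j u_{V_j}\bigl((z-\ur^*)_{I_j}\bigr)$ the positive harmonic function of the tangent cone and by $p=\sum_j\deg u_{V_j}$ its degree.

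Next I would transcribe the three ingredients of the proof of Proposition~\ref{inv.prop.1}, keeping $\nu_{\lambda,\epsilon}$, $W_{\lambda,\epsilon}$, $\theta_\lambda$ as there. The exit estimate \cite[Lemma~14]{DW15} is uniform in the starting point, so the analogue of~\eqref{inv.1} holds for $\ur_\lambda$; combined with \emph{any} fixed polynomial lower bound $\hat{\mathbf{P}}^{\ur_\lambda}_\lambda(\tau>1)\ge c\,h_\lambda^q$ — obtained as in~\eqref{inv.2} by the lattice monotonicity argument, or blockwise from \cite[Theorem~1]{DW15} applied to each $V_j$ (the between-block inequalities being inactive over a short initial window, by path continuity) — this gives the analogues of~\eqref{inv.3} and~\eqref{inv.4}, so that, with $\theta_\lambda\downarrow0$ slowly,
\begin{multline*}
\hat{\mathbf{P}}^{\ur_\lambda}_\lambda\bigl(\ux^\lambda(1)\in A,\ \tau>1\bigr)\\
=(1+o(1))\,\hat{\mathbf{P}}^{\ur_\lambda}_\lambda\bigl(\ux^\lambda(1)\in A,\ \tau>1,\ \nu_{\lambda,\epsilon}\le H_\lambda^{-2\epsilon},\ \max_{t\le\nu_{\lambda,\epsilon}}|\ux^\lambda(t)-\ur^*|\le2\theta_\lambda\bigr).
\end{multline*}
Applying the strong Markov property at $\nu_{\lambda,\epsilon}$ reduces the problem to the behaviour of the walk launched from a point $\uy$ that is interior, within $o(1)$ of $\ur^*$, and at distance $\ge H_\lambda^{-2\epsilon}$ from the \emph{active} faces only.

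Because $W=\prod_jV_j$ near $\ur^*$ and the $n$ coordinate walks are independent, the escape-from-the-active-boundary event factorizes over the blocks, and a version of \cite[Lemma~20]{DW15} localized at $\ur^*$ — run in each $V_j$, with $u_{\ur^*}$ in place of $u_W$ — yields matching $h_\lambda^{p}$-asymptotics for the numerator and the denominator; hence $\hat{\mathbf{P}}^{\ur_\lambda}_\lambda(\ux^\lambda(1)\in\cdot\mid\tau>1)$ converges weakly, the limit being the time-$1$ law $\mu_{\ur^*}$ of the Brownian meander in $W$ issued from $\ur^*$ in the sense of Garbit~\cite{Garbit2009}. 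That $\mu_{\ur^*}$ has bounded density — uniformly as $\ur^*$ ranges over a bounded subset of $\partial W$ — follows by splitting $[0,1]=[0,\tfrac12]\cup[\tfrac12,1]$, bounding the numerator by the (bounded) free transition density on $[\tfrac12,1]$ times the survival probability on $[0,\tfrac12]$, and using that the survival probabilities on $[0,\tfrac12]$ and on $[0,1]$ vanish at the same polynomial rate as $\uy\to\ur^*$, with constants continuous in $\ur^*$. Finally, the functional convergence on $C[0,1]$ to the meander in $W$ from $\ur^*$ is obtained by repeating the argument of \cite[Theorem~1]{DW15b} with~\eqref{inv.3} and~\eqref{inv.4} replaced by the analogues just established, the limit being identified via~\cite{Garbit2009}.

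The hard part is the reduction in the last paragraph: $\ur^*$ is a \emph{non-apex}, generally non-smooth, boundary point, which forces one to pass to the tangent cone and to $u_{\ur^*}$, and to run \cite[Lemma~20]{DW15} — stated for a single Weyl chamber near its apex — blockwise over the \emph{product} tangent cone. This is exactly where the independence of the $n$ coordinate walks, and the fact that the between-block constraints remain inactive over the localization window $[0,\nu_{\lambda,\epsilon}]$, are used; everything else is a routine transcription of the proof of Proposition~\ref{inv.prop.1}.
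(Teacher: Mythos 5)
Your block decomposition of the starting points --- maximal groups of coordinates whose values coalesce at $\ur^*$, each group seeing a chamber of type $A$ (or type $C$ for the lowest group when $r^*_1=0$) --- is exactly the decomposition the paper uses. But the two arguments diverge in how the blocks are glued back together, and that is where your proposal has a genuine gap. You assert that, after localizing at $\ur^*$, ``the escape-from-the-active-boundary event factorizes over the blocks,'' and that a blockwise version of \cite[Lemma~20]{DW15} run in the tangent cone $\prod_j V_j$ yields matching asymptotics for $\hat{\mathbf{P}}^{\ur}_\lambda(\ux^\lambda(1)\in A,\,\tau>1)$ and $\hat{\mathbf{P}}^{\ur}_\lambda(\tau>1)$. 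This cannot work as stated: $\tau$ is the exit time from $W$, not from the tangent cone, and the between-block ordering constraints (together with $x_1>0$ when $a_1>0$) are active over all of $[0,1]$, not only on the localization window $[0,\nu_{\lambda,\epsilon}]$. The event $\{\tau>1\}$ is strictly contained in the intersection of the blockwise survival events, and the ratio of the two probabilities converges to the probability that the independent block meanders never cross --- a constant in $(0,1)$, not $1$. A purely blockwise computation therefore produces the meander in the tangent cone (the product of independent block meanders), not the meander in $W$, which is that product \emph{further conditioned} on mutual non-crossing; so the limit you correctly name is not delivered by the mechanism you describe. A minimal example: $n=2$, $\ur^*=(1,2)$; the tangent cone is all of $\bbR^2$ and $u_{\ur^*}\equiv 1$, so your blockwise step yields unconditioned Brownian motion, whereas the correct limit is Brownian motion conditioned on $0<B_1<B_2$ throughout $[0,1]$.

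The paper closes exactly this gap, and without re-running the localization machinery: it conditions each block separately on staying in its own chamber --- to which Proposition~\ref{inv.prop.1} applies directly, since within a block the starting points converge to the apex --- and then observes that, under these blockwise conditionings, the residual event that distinct blocks never intersect has probability bounded away from zero, so this extra conditioning is harmless and passes to the limit. Conditioning on $\{\tau>1\}$ is thus asymptotically a blockwise conditioning followed by a positive-probability conditioning, which gives weak convergence to the $W$-meander and the uniform boundedness of the limiting densities. Your proposal becomes correct if you replace the ``factorization plus blockwise Lemma~20'' step by this two-stage conditioning; the remaining ingredients (the local product structure of $W$ at $\ur^*$, the polynomial lower bound on the survival probability, and the density bound via splitting $[0,1]$) are sound.
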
 
\begin{proof}
We just split the original set of random walks into a finite number of subsets 
in such a way that the differences of coordinates of the starting points in 
every block converge to zero and the differences of coordinates from different
blocks stay bounded away from zero. Then, the probability that different blocks
do not intersect is bounded away from zero and, consequently, the conditioning 
on $\{\tau>1\}$ is equivalent to conditioning every block on staying in the
corresponding chamber. (If $r^*_1>0$, then every block is a random block in a 
chamber of type $A$, while if $r^*_1=0$, then the lowest block is a random walk 
in a chamber of type $C$ and all other blocks are random walks in chambers of 
type $A$.)
\end{proof}

\medskip
\noindent
\begin{proof}[Proof of~\eqref{eq:Inp2}]
Assume that there exists a sequence $\ur(j)$ such that
\[
\hat{\mathbf{P}}^{\ur(j)}_{1,\lambda} \bigl(
\max_{t\leq 1} x^\lambda_n(t)\leq 2\eta \bigm| \tau>1 \bigr) \to 0 .
\]
Since we are looking at starting points $\ur$ with $r_n\leq \eta$, there exists 
a convergent subsequence $\ur(j_k)$. Let $\ur^*$ denote the limiting point. It
follows immediately from the usual functional CLT that the case $\ur^*\in W$ is 
impossible. But, if $\ur^*\in\partial W$, then we may use 
Corollary~\ref{inv.corr.1} to conclude that the Brownian meander in $W$ started 
at $\ur^*$ leaves the set $\{\ux\in W \,:\, x_n\leq 2\eta\}$ with probability
one. However, this would contradict~\cite[Theorem~3.2]{KS11}.
Thus,
\begin{equation}
\label{inv.7}
\inf_{\ur \,:\, r_n\leq\eta} \hat{\mathbf{P}}^{\ur}_{1,\lambda}
\bigl( \max_{t\leq 1} x^\lambda_n(t)\leq 2\eta \bigm| \tau>1 \bigr) > 0 ,
\end{equation}
which implies~\eqref{eq:Inp2}.
\end{proof}

\begin{proof}[Proof of \eqref{eq:Inp3}]
Fix some $\epsilon>0$ and define
\[
W_{\leq\epsilon} =
\{\ux\in W \,:\, |x_{i+1}-x_i|\leq\epsilon \text{ for some } i\geq 0 \} .
\]
Assume that there exists a sequence $\ur(j)$ such that
\[
\hat{\mathbf{P}}^{\ur(j)}_{1,\lambda}
\bigl(\ux^\lambda(1)\in W_{\leq\epsilon} \,;\,
\max_{t\leq 1} x_n^\lambda(t)\leq 2\eta \bigm| \tau>1 \bigr)
\geq 
\epsilon^{1/2} .
\]
We may again assume that $\ur(j)$ converges to $\ur^*$ and this limiting point 
can not lie in $W$.
But, if $\ur^*$ is on the boundary of $W$, then the conditions of 
Corollary~\ref{inv.corr.1} are satisfied and the contradiction follows now 
from the boundedness of the density of the limiting law and the fact that
$vol(W_{\leq\epsilon} \cap \{\ux \,:\, x_n\leq 2\eta\})
\leq C_3 \eta^{n-1}\epsilon$.

As a consequence we have that, for all $\epsilon$ small enough, 
\begin{equation}
\label{inv.8}
\inf_{\ur \,:\, r_n\leq\eta}
\hat{\mathbf{P}}^{\ur}_{1,+,\lambda}
\bigl(\ux^\lambda(1)\in\Anlr , \max_{t\leq 1} x^\lambda_n(t)\leq 2\eta
\bigm| \tau>1 \bigr)
\geq 1- \epsilon^{1/2} .
\end{equation}
Combining~\eqref{inv.7} and~\eqref{inv.8}, we conclude that~\eqref{eq:Inp3}
holds for $t=1$ and all $\epsilon$ sufficiently small.
Using Brownian scaling, we conclude that~\eqref{eq:Inp3} is valid for all $t>0$.
\end{proof}

\bibliographystyle{plain}
\bibliography{OWalks}

\end{document}